\DeclareMathAlphabet{\pazocal}{OMS}{zplm}{m}{n}
\DeclareMathAlphabet{\mymathbb}{U}{bbold}{m}{n}
\newcommand{\One}{\mymathbb{1}}
\newcommand{\Zero}{\mymathbb{0}}
    \definecolor{linkcolor}{HTML}{0D6A9E}
    \definecolor{3blue}{HTML}{0072B2}
    \definecolor{3green}{HTML}{009E73}
    \definecolor{3ochre}{HTML}{E69F00}
    \definecolor{3yellow}{HTML}{F0E442}
    \definecolor{3cyan}{HTML}{56B4E9}
    \definecolor{3red}{HTML}{D55E00}
    \definecolor{3pink}{HTML}{CC79A7}
    \definecolor{2blue}{HTML}{1A85FF}
    \definecolor{2red}{HTML}{D41159}
    \newtheoremstyle{komait}
      {\topsep}   
      {\topsep}   
      {\itshape}  
      {0pt}       
      {\bfseries\sffamily} 
      {.}         
      {5pt plus 1pt minus 1pt} 
      {}          
    \newtheoremstyle{komanormal}
      {\topsep}   
      {\topsep}   
      {\rmfamily}  
      {0pt}       
      {\bfseries\sffamily} 
      {.}         
      {5pt plus 1pt minus 1pt} 
      {}          
    \theoremstyle{komait}
    \newtheorem{theorem}{Theorem}[section]
    \newtheorem{lemma}[theorem]{Lemma}
    \newtheorem{definition}[theorem]{Definition}
    \newtheorem{proposition}[theorem]{Proposition}
    \theoremstyle{komanormal}
    \newtheorem{example}[theorem]{Example}
    \newtheorem{remark}[theorem]{Remark}
\newcommand{\diag}{\mathrm{diag}}
\newcommand{\MC}{\mathcal{M}_q}
\newcommand{\C}{\mathcal{C}}
\newcommand{\Rep}{\mathsf{Rep}}
\newcommand{\Repfour}{\mathsf{Rep}\big(H_{D_4}(t,q)\big)}
\newcommand{\Mat}{\mathsf{Mat}}
\newcommand{\GL}{\mathsf{GL}}
\newcommand{\SL}{\mathsf{SL}}
\newcommand{\PGL}{\mathsf{PGL}}
\newcommand{\PSL}{\mathsf{PSL}}
\newcommand{\integer}{\mathbb{Z}}
\newcommand{\real}{\mathbb{R}}
\newcommand{\complex}{\mathbb{C}}
\newcommand{\ord}[1]{\tensor*[^\bullet_\bullet]{{#1}}{^\bullet_\bullet}} 
\begin{document}

\title{\usekomafont{subtitle}\LARGE\vspace{-3.5em}Generalized double affine Hecke algebras,\\ their representations,\\and higher Teichm\"uller theory\vspace{-.5em}}
\author{\raisebox{-.5ex}{\href{https://orcid.org/0000-0002-4975-8774}{\includegraphics[height=15pt]{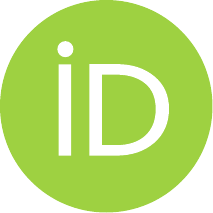}}}\hspace{.5em}Davide Dal Martello{\rmfamily\footnote{\hspace{.4em}University of Birmingham, contact@davidedalmartello.com}} \and \raisebox{-.5ex}{\href{https://orcid.org/0000-0001-9917-2547}{\includegraphics[height=15pt]{gfx/orcid.pdf}}}\hspace{.5em}Marta Mazzocco{\rmfamily\footnote{\hspace{.4em}University of Birmingham, m.mazzocco@bham.ac.uk}}}
\date{\vspace{-2.5em}}

\maketitle

\begin{abstract}\noindent
Generalized double affine Hecke algebras (GDAHA) are flat deformations of the group algebras of $2$-dimensional crystallographic groups associated to star-shaped simply laced affine Dynkin diagrams. In this paper, we first construct a functor that sends representations of the $\tilde D_4$-type GDAHA to representations of the $\tilde E_6$-type one for specialised parameters. Then, under no restrictions on the parameters, we construct embeddings of both GDAHAs of type $\tilde D_4$ and $\tilde E_6$ into matrix algebras over quantum cluster $\pazocal{X}$-varieties, thus linking to the theory of higher Teichm\"uller spaces. For $\tilde E_6$, the two explicit representations we provide over distinct quantum tori are shown to be related by quiver reductions and mutations.
\end{abstract}

\noindent
\begin{center}{\vspace{-.5em}\small{\textbf{\textsf{Keywords}}\hspace{.5em}Generalized double affine Hecke algebras, higher Teichm\"uller theory, category theory.}}
\end{center}

\renewcommand{\contentsname}{\textcolor{linkcolor}{Contents}}

\tableofcontents

\section{Introduction}

Double affine Hecke algebras are a class of associative unital algebras linked to root systems, depending on several parameters. They play a fundamental role in the proof of Macdonald conjectures \cite{Cherednik2005,Noumi2004, Sahi1999,Stokman2000} and are deeply related to integrable systems of Calogero-Moser type \cite{Etingof2002,Oblomkov2004} as well as to the Painlev\'e differential equations \cite{Oblomkov2004,Mazzocco2016}.

For any simply laced Dynkin diagram  $\mathcal D$, with star-shaped affinization $\tilde{\mathcal D}$, the generalized double affine Hecke 
algebra (GDAHA)  associated to $\mathcal{D}$
 was introduced by Etingof, Oblomkov and Rains in \cite{Etingof2007} as a flat deformation of $\complex[G_l]$, the group algebra of the $2$-dimensional crystallographic group 
 $G_l:=\mathbb Z_l \ltimes\mathbb Z^2$, $l=2,3,4,6$.
 
 In this paper, we
 focus on $l=2$ and $l=3$, which correspond respectively to $\Tilde{D}_4$ and $\Tilde{E}_6$.
\begin{figure}[!htb]
    \centering
    \includegraphics[width=130mm]{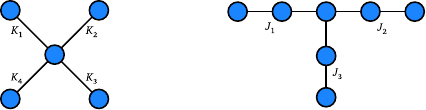}
    \caption{Affine Dynkin diagrams: $\Tilde{D}_4$ on the left and $\Tilde{E}_6$ on the right. Each leg contributes a generator and its length, defined by node-counting, determines the order of the corresponding Hecke relation. These two cases are special in that all legs have equal length.}\label{fig:affine}
\end{figure} 

For $\Tilde{D}_4$, the associated GDAHA we denote $H_{D_4}(t,q)$ recovers the $CC_1^{\vee}$-type Cherednik algebra.
Namely, $H_{D_4}(t,q)$  is the family of algebras over $\complex$ with parameters $t_1,t_2,t_3,t_4,q\in\mathbb C^*$, $q^m\neq 1$ for any $m\in\mathbb Z_{>0}$, generated by $K_1,K_2,K_3,K_4$ subject to the relations
\begin{equation*}
    \left(K_i-t_i\right)\left(K_i-\frac{1}{t_i}\right)=0,\quad i=1,2,3,4;\qquad K_1 K_2 K_3 K_4 = q^{\nicefrac{-1}{2}}.
\end{equation*}

For $\Tilde{E}_6$, the GDAHA $H_{E_6}(t,q)$ is a true generalisation of Cherednik's DAHA.
It is defined as the family of algebras over $\complex$ 
depending on $q,t_i^{(j)}\in\mathbb C^*$, $i=1,2,3$ and $j=1,2$, $q^m\neq 1$ for any $m\in\mathbb Z_{>0}$, by generators $J_1,J_2,J_3$ and relations
\begin{equation*}
    \left(J_i-t_i^{(1)}\right)\left(J_i-t_i^{(2)}\right)\left(J_i-\frac{1}{t_i^{(1)}t_i^{(2)}}\right)=0,\quad i=1,2,3;\qquad J_1 J_2 J_3 = q^{\nicefrac{-1}{3}}.
\end{equation*}

Denote by $\mathbb{T}$ the algebraic torus formed by the tuple $t$,  namely $\mathbb T= (\complex^*)^4$ in the case of $H_{D_4}(t,q)$ and $\mathbb T= (\complex^*)^6$ in the case of $H_{E_6}(t,q)$. Then, 
each family of GDAHAs can be obtained by specializations
of a universal algebra $\mathsf{H}_\mathcal{D}$ over $\complex[\mathbb{T}]\otimes\complex[q^{\nicefrac{\pm1}{l}}]$, in which $q$ and all elements of $t$ are
central.

Regarding the GDAHA representation theory,  the case of $H_{D_4}$ is well-understood \cite{Koornwinder2011,Macdonald2003,Oblomkov2009}. For the other three cases $\Tilde{E}_{6,7,8}$, only existence results or representations that restrict to special parameters are available.
In particular, \cite{Jordan14} gives a categorical description of spherical subalgebras in terms of a quotient on the category of equivariant $\pazocal{D}_q$-modules while \cite{Etingof2007} suggests that methods in multiplicative preprojective algebras could be adapted to classify the finite dimensional representation theory.
Etingof, Gan and Oblomkov also proved that the monodromy of the Knizhnik-Zamolodchikov connection defines a functor
between the categories of finite dimensional representations of the rational GDAHA and its corresponding non-degenerate one \cite{Etingof2006}.
In principle, this functor allows to construct a large supply of finite dimensional representations of a GDAHA starting from the finite dimensional representations of its rational degeneration.
Using this very technique, some finite dimensional representations of  $H_{E_6}(t,q)$ for special values of the parameters were produced in terms of the $R$-matrices of $U_q(\mathfrak{sl}_N)$ \cite{Fu2016}, but fail to address the universal algebra.

Our first result constructs
explicit representations of $H_{E_6}(\tilde t,q)$, a specialization of the GDAHA given by selected parameters $\tilde t$, from the
 representation theory of $H_{D_4}(t,q)$: 
\begin{theorem}\label{thm:functor}
Let $\mathsf{Rep}\big(H_{\mathcal{D}}(t,q)\big)$  be the category of representations of the algebra $H_{\mathcal{D}}(t,q)$.
There exists a functor 
$$
{\mathcal F}_q\ : \ \Repfour\ {\rightarrow}\ \Rep\big(H_{E_6}(\tilde{t},q)\big),
$$
where the specialized parameters $\tilde{t}$ are given by
\begin{equation}\label{eq:paramE6}
\begin{aligned}
\tilde{t}^{(1)}_1=\tilde{t}^{(2)}_1=1,\qquad
&\tilde{t}^{(1)}_2=t_1^{\nicefrac{-4}{3}}t_2^{\nicefrac{2}{3}}t_3^{\nicefrac{2}{3}},\quad
\tilde{t}^{(2)}_2=t_1^{\nicefrac{2}{3}}t_2^{\nicefrac{-4}{3}}t_3^{\nicefrac{2}{3}},\\
&\tilde{t}^{(1)}_3=\frac{q^{-\nicefrac{1}{3}}}{t_1^{\nicefrac{2}{3}}t_2^{\nicefrac{2}{3}}t_3^{\nicefrac{2}{3}}},\quad\,\,
\tilde{t}^{(2)}_3=q^{\nicefrac{1}{6}}t_1^{\nicefrac{1}{3}}t_2^{\nicefrac{1}{3}}t_3^{\nicefrac{1}{3}}t_4.
\end{aligned}
\end{equation}
\end{theorem}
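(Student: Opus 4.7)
The plan is to realise $\mathcal F_q$ as the pullback functor along an explicit unital algebra homomorphism $\Phi_q : H_{E_6}(\tilde t,q) \to H_{D_4}(t,q)$. Any such $\Phi_q$ yields a covariant functor $(V,\rho)\mapsto(V,\rho\circ\Phi_q)$ acting as the identity on morphisms, so functoriality is automatic and the whole problem reduces to producing three elements $j_1, j_2, j_3 \in H_{D_4}(t,q)$ satisfying the cubic Hecke relations of $H_{E_6}(\tilde t,q)$ with the specialised eigenvalues of~\eqref{eq:paramE6}, together with the product identity $j_1 j_2 j_3 = q^{-\nicefrac13}$.

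To guess the correct form of the $j_i$, I am guided by three clues. First, the node-counting heuristic of Figure~\ref{fig:affine}: each length-two leg of $\tilde E_6$ should correspond to a degree-two expression in the length-one Hecke generators $K_a$. Second, the specialisation $\tilde t_1^{(1)}=\tilde t_1^{(2)}=1$ forces $j_1$ to be unipotent modulo its cubic, which rigidly constrains its form. Third, the parameter $t_4$ enters only in the two free eigenvalues of $J_3$, so $K_4$ must play a distinguished role in $j_3$ while $j_1$ and $j_2$ are built symmetrically from $K_1,K_2,K_3$. These clues suggest taking each $j_i$ as a scalar multiple of a symmetrised quadratic combination of two of the $K_a$'s (and their inverses, which exist thanks to the quadratic Hecke relations), plus a scalar shift, with the fractional exponents of the $t_i$ in~\eqref{eq:paramE6} tuned so that the elementary symmetric functions of the three eigenvalues of $j_i$ factor polynomially through such expressions. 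The verification then splits into two parts. For each $i$, expand $(j_i-\tilde t_i^{(1)})(j_i-\tilde t_i^{(2)})(j_i-1/(\tilde t_i^{(1)}\tilde t_i^{(2)}))$ and reduce it to a normal form modulo the quadratic relations $(K_i-t_i)(K_i-\nicefrac{1}{t_i})=0$ and the central identity $K_1 K_2 K_3 K_4 = q^{-\nicefrac12}$. Separately, check that $j_1 j_2 j_3 = q^{-\nicefrac13}$; after cancelling the scalar normalisations this reduces to a rescaled version of $K_1 K_2 K_3 K_4 = q^{-\nicefrac12}$, with the exponents $-\nicefrac13$ and $\nicefrac16$ of $q$ appearing in~\eqref{eq:paramE6} being exactly those needed to convert $-\nicefrac12$ into $-\nicefrac13$.

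The main obstacle is the cubic Hecke verification: the two quadratic relations on the individual $K_i$ do not formally imply any cubic identity for a word in them, so the vanishing of the cubic polynomial in $j_i$ has to be extracted from a genuinely delicate interplay between those quadratics and the Coxeter-type product $K_1 K_2 K_3 K_4 = q^{-\nicefrac12}$, and it is essentially this delicacy that rigidifies both the exponents of~\eqref{eq:paramE6} and the unipotent character of $j_1$. Once the three cubic relations and the product relation are verified, the existence of $\Phi_q$ follows from the universal property of the presentation of $H_{E_6}(\tilde t,q)$, and $\mathcal F_q$ is the pullback functor it induces.
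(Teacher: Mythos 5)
Your proposed route is to build $\mathcal F_q$ as the pullback along an algebra homomorphism $\Phi_q\colon H_{E_6}(\tilde t,q)\to H_{D_4}(t,q)$, which would in particular leave the underlying vector space unchanged: $(V,\rho)\mapsto (V,\rho\circ\Phi_q)$. This clashes with the functor the theorem is actually asserting. The construction in the paper (\Cref{thm:truefunctor}, built from the quantum middle convolution of \Cref{def-MC}/\Cref{prop:Misfunctor} and the quantum Killing factorization of \Cref{th-q-killing}/\Cref{lem:RepE_6}) sends an object $(\rho,V)$ to a representation on the \emph{different} space $E(V)=e_1(V)\oplus e_2(V)\oplus e_3(V)$, where $e_i$ projects onto the $t_i^{-2}$-eigenspace of $\rho(\widehat K_i)$. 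This $E(V)$ depends on $\rho$ and has, for a generic finite-dimensional $V$, dimension on the order of $\tfrac{3}{2}\dim V$ rather than $\dim V$. No pullback functor can behave this way, so your framework cannot reproduce $\mathcal F_q$.

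There is also a genuine gap independently of the mismatch: you never actually produce the elements $j_1,j_2,j_3\in H_{D_4}(t,q)$; you only describe heuristics for what shape they ``should'' have (symmetrised quadratics in the $K_a$, scalar shifts, exponent tuning) and flag the cubic Hecke verification as the hard part without carrying it out. As you yourself note, the quadratic Hecke relations on the $K_a$'s and the cyclic relation give no a priori reason for a chosen word to satisfy a cubic relation with the prescribed eigenvalues, so an existence claim here needs an explicit candidate and a computation. The paper avoids this entirely by working at the level of representations: after projecting to the eigenspaces, the three operators $EN_i$ become genuine pseudo-reflections (\eqref{EN}) with prescribed eigenvalue structure, and the cubic relations for $U$ and $L$ are then automatic from their triangular shape in the Killing factorization (\eqref{UinMC}--\eqref{LinMC}); only the cubic relation for $\Pi=(UL)^{-1}$ requires a separate check, which the paper performs on the basic representation (faithfulness guarantees this suffices). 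That passage through $E(V)$ is exactly where the two ``fixed'' parameters $\tilde t_1^{(1)}=\tilde t_1^{(2)}=1$ arise, namely as the three unit diagonal entries of $U$; there is no known way to encode this eigenspace restriction inside the abstract algebra $H_{D_4}(t,q)$, which is why the functor is not pullback-type.
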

In proving this theorem, we construct the analogue of the basic representation for this specialized GDAHA $H_{E_6}(\tilde{t},q)$, see formulae (\ref{eq:Lb-rep}-\ref{eq:Ub-rep}-\ref{eq:Pib-rep}).

The centers of $H_{D_4}(t,1)$ and $H_{E_6}(t,1)$ are both affine del Pezzo surfaces, obtained by removing a triangle and a nodal $\mathbb P^1$ respectively from a degree $3$ projective del Pezzo:
\begin{equation*}
    \begin{aligned}
    \tilde D_4:\, & \hspace{.5em} x_1 x_2 x_3 + x_1^2+x_2^2 + x_3^2+ a_1 x_1 + a_2 x_2 + a_3 x_1 + a_4=0,\\
       \tilde E_6:\, & \hspace{.5em} x_1 x_2 x_3 + x_1^3+x_2^3 + x_3^2+ a_1 x_1^2 + a_2 x_2^2 + a_3 x_1 + a_4 x_2 + a_5 x_3 + a_6=0,\\
    \end{aligned}
\end{equation*}
for the parameters $a_i\in\mathbb C$  \cite{Etingof2007}.
The quantizations of these surfaces are special cases of the generalized Sklyanin--Painlev\'e algebra introduced in \cite{Chekhov2021}. Based on the fact that the $ \tilde D_4$
del Pezzo can be obtained as a limit of the $\tilde E_6$ one, 
Chekhov, Rubtsov and the second author of the present paper conjectured that 
$H_{D_4}(t,q)$ could be obtained as a limit of $H_{E_6}(t,q)$. \Cref{thm:functor} proves this conjecture.
Note that in \Cref{thm:functor}, two of the parameters   $\Tilde{t}$ are fixed, leaving the remaining ones free. 
To tackle the representation theory of the universal $\mathsf{H}_{E_6}$, we bring the machinery of higher Teichm\"uller spaces and quantum cluster varieties into the GDAHA theory.

In \cite{Mazzocco2018}, the second author constructed an embedding 
of $H_{D_4}(t,q)$ into $\Mat_2(\mathbb{T}^2_q)$, i.e., $2\times2$ matrices with entries in the quantum 2-torus.
Our second result is 
an explicit embedding of $\mathsf{H}_{E_6}$ into $3\times 3$ matrices over a quantum cluster $\pazocal{X}$-variety.
Before enunciating this theorem, let us briefly introduce some notation. 
For any quiver $\pazocal{Q}$ having no loops nor $2$-cycles, we denote the corresponding quantum $\pazocal{X}$-torus by $\pazocal{X}_\pazocal{Q}$ (\Cref{sec:quantization}). In this paper, we mainly deal with the quivers  $\pazocal{Q}_1,\pazocal{Q}_2,\pazocal{Q}_3$ depicted in \Cref{fig:quivers}.
\begin{figure}[!htb]
    \centering
    \includegraphics[width=\textwidth]{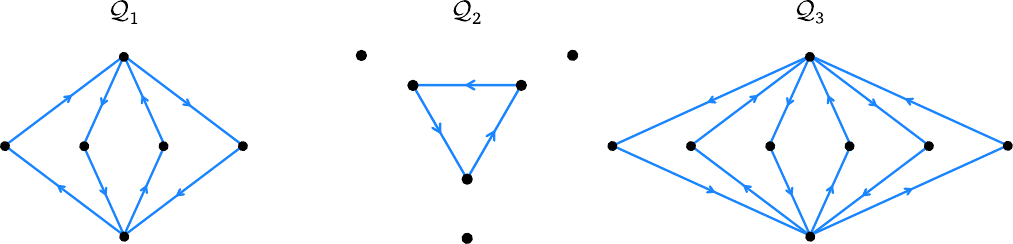}
    \caption{The main quivers starring in this paper. $\pazocal{Q}_{2,3}$ emerge from the rank $n=2,3$ higher Teichm\"uller theory, respectively. $\pazocal{Q}_1$ is a full subquiver of $\pazocal{Q}_3$, obtained by evaluating at $1$ specific central elements in $\pazocal{X}_{\pazocal{Q}_3}$.}\label{fig:quivers}
\end{figure} 
\begin{theorem}\label{thm:E6-emb}
Let $\pazocal{X}_{\pazocal{Q}_3}$ be the  quantum $\pazocal{X}$-torus generated by  $Z_{C1}$, $Z_{C2}$, $Z_{C3}$, $Z_{Y1}$, $Z_{Y2}$, $Z_{Y3}$, $Z^{(t)}_{111}$, $Z^{(b)}_{111}$ with relations encoded by the diamond-shaped quiver $\pazocal{Q}_3$ of \Cref{fig:3amalquiver}.
The matrices $\overline C, \overline Y, \overline R\in\SL_3(\pazocal{X}_{\pazocal{Q}_3}^{\nicefrac{1}{3}})$ given by equations \eqref{RCY}, where $\pazocal{X}_{\pazocal{Q}_3}^{\nicefrac{1}{3}}$ is the extension of $\pazocal{X}_{\pazocal{Q}_3}$ containing cubic roots of the variables $\{Z_{C1},\dots,Z^{(b)}_{111}\}$,
satisfy the Hecke relations
\begin{align*}
    \left(\overline{C}-Z_{C1}^{\nicefrac{1}{3}}Z_{C2}^{\nicefrac{2}{3}}Z_{Y3}^{\nicefrac{2}{3}}Z_{C3}^{\nicefrac{1}{3}}Z_{111}^{(t)\nicefrac{2}{3}}Z_{111}^{(b)\nicefrac{2}{3}}\One\right)&\left(\overline{C}-Z_{C1}^{\nicefrac{1}{3}}Z_{C2}^{\nicefrac{-1}{3}}Z_{Y3}^{\nicefrac{-1}{3}}Z_{C3}^{\nicefrac{1}{3}}Z_{111}^{(t)\nicefrac{-1}{3}}Z_{111}^{(b)\nicefrac{-1}{3}}\One\right)\\
    &\qquad \left(\overline{C}-Z_{C1}^{\nicefrac{-2}{3}}Z_{C2}^{\nicefrac{-1}{3}}Z_{Y3}^{\nicefrac{-1}{3}}Z_{C3}^{\nicefrac{-2}{3}}Z_{111}^{(t)\nicefrac{-1}{3}}Z_{111}^{(b)\nicefrac{-1}{3}}\One\right)=\Zero,\allowdisplaybreaks\\
    \left(\overline{Y}-Z_{Y1}^{\nicefrac{1}{3}}Z_{Y2}^{\nicefrac{2}{3}}Z_{Y3}^{\nicefrac{1}{3}}Z_{C3}^{\nicefrac{2}{3}}Z_{111}^{(t)\nicefrac{2}{3}}Z_{111}^{(b)\nicefrac{2}{3}}\One\right)&\left(\overline{Y}-Z_{Y1}^{\nicefrac{1}{3}}Z_{Y2}^{\nicefrac{-1}{3}}Z_{Y3}^{\nicefrac{1}{3}}Z_{C3}^{\nicefrac{-1}{3}}Z_{111}^{(t)\nicefrac{-1}{3}}Z_{111}^{(b)\nicefrac{-1}{3}}\One\right) \\
    &\qquad \left(\overline{Y}-Z_{Y1}^{\nicefrac{-2}{3}}Z_{Y2}^{\nicefrac{-1}{3}}Z_{Y3}^{\nicefrac{-2}{3}}Z_{C3}^{\nicefrac{-1}{3}}Z_{111}^{(t)\nicefrac{-1}{3}}Z_{111}^{(b)\nicefrac{-1}{3}}\One\right)=\Zero,\allowdisplaybreaks\\
    \left(\overline{R}-Z_{Y1}^{\nicefrac{2}{3}}Z_{Y2}^{\nicefrac{1}{3}}Z_{C1}^{\nicefrac{2}{3}}Z_{C2}^{\nicefrac{1}{3}}Z_{111}^{(t)\nicefrac{2}{3}}Z_{111}^{(b)\nicefrac{2}{3}}\One\right)&\left(\overline{R}-Z_{Y1}^{\nicefrac{-1}{3}}Z_{Y2}^{\nicefrac{1}{3}}Z_{C1}^{\nicefrac{-1}{3}}Z_{C2}^{\nicefrac{1}{3}}Z_{111}^{(t)\nicefrac{-1}{3}}Z_{111}^{(b)\nicefrac{-1}{3}}\One\right)\\
     &\qquad \left(\overline{R}-Z_{Y1}^{\nicefrac{-1}{3}}Z_{Y2}^{\nicefrac{-2}{3}}Z_{C1}^{\nicefrac{-1}{3}}Z_{C2}^{\nicefrac{-2}{3}}Z_{111}^{(t)\nicefrac{-1}{3}}Z_{111}^{(b)\nicefrac{-1}{3}}\One\right)=\Zero,
\end{align*}
and the cyclic one
\begin{equation*}    \overline{C}\,\overline{Y}\,\overline{R}=q^{\nicefrac{2}{3}}\One,
\end{equation*}
where the monomials $Z_{C1}^{\nicefrac{1}{3}}Z_{C2}^{\nicefrac{2}{3}}Z_{Y3}^{\nicefrac{2}{3}}Z_{C3}^{\nicefrac{1}{3}}Z_{111}^{(t)\nicefrac{2}{3}}Z_{111}^{(b)\nicefrac{2}{3}},\dots,Z_{Y1}^{\nicefrac{-1}{3}}Z_{Y2}^{\nicefrac{-2}{3}}Z_{C1}^{\nicefrac{-1}{3}}Z_{C2}^{\nicefrac{-2}{3}}Z_{111}^{(t)\nicefrac{-1}{3}}Z_{111}^{(b)\nicefrac{-1}{3}}$ 
appearing in the Hecke relations are central elements in $\pazocal{X}_{\pazocal{Q}_3}^{\nicefrac{1}{3}}$.\\
The map $
J_1\to \overline{C},\, J_2\to  \overline{Y},\, J_3\to \overline{R},\, q\to q^{-2}$ embeds the universal GDAHA $\mathsf{H}_{E_6}$ into $\Mat_3(\pazocal{X}_{\pazocal{Q}_3}^{\nicefrac{1}{3}})$.
\end{theorem}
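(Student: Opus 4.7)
The strategy is to verify that the three matrices $\overline{C}, \overline{Y}, \overline{R}$ defined via the higher Teichm\"uller formulae \eqref{RCY} satisfy precisely the defining relations of $\mathsf{H}_{E_6}$, and then to argue that the induced algebra map is injective. The first step is to read off from the quiver $\pazocal{Q}_3$ the commutation relations in $\pazocal{X}_{\pazocal{Q}_3}^{\nicefrac{1}{3}}$: each arrow $i\to j$ encodes a $q$-commutation $Z_i Z_j = q^{2}Z_j Z_i$ (or its inverse), and the diamond shape has the feature that each cluster variable is balanced in incoming and outgoing arrows, so that a monomial whose exponents lie in the kernel of the exchange matrix commutes with everything. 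I would first verify centrality of the six monomials appearing on the right-hand sides of the Hecke polynomials by a direct check using the quiver's exchange matrix; this is a linear-algebra calculation, not a symbolic manipulation in quantum tori.

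Next I would tackle the cyclic relation $\overline{C}\,\overline{Y}\,\overline{R}=q^{\nicefrac{2}{3}}\One$. Because the matrices in \eqref{RCY} are built from triangular/anti-diagonal factors arising from a triangulation of a 4-punctured sphere at rank $n=3$ (Fock--Goncharov coordinates), this identity should reflect the fact that on the sphere the total holonomy around all punctures is trivial; the overall $q^{\nicefrac{2}{3}}$ factor is the quantum correction tracking the reordering of elementary factors that is needed to telescope the product to the identity. My plan would be to pick the natural factorisation of each $\overline{C},\overline{Y},\overline{R}$ into snake-move matrices, and cancel adjacent factors one by one, carefully tracking the $q$-powers picked up from the quiver commutations.

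The core step is checking the three cubic Hecke relations. Each matrix $M\in\{\overline{C},\overline{Y},\overline{R}\}$ is an $\SL_3$ holonomy, so it suffices to show that its characteristic polynomial factors as $(M-\lambda_1\One)(M-\lambda_2\One)(M-\lambda_3\One)$ for the three specific central scalars $\lambda_i$ listed. Concretely, I would compute the two quantum traces $\mathrm{tr}_q(M)$ and $\mathrm{tr}_q(M^{-1})$ --- which, by the centrality argument, lie in the commutative subalgebra generated by the puncture monomials --- and match them against the elementary symmetric polynomials $\lambda_1+\lambda_2+\lambda_3$ and $\lambda_1^{-1}+\lambda_2^{-1}+\lambda_3^{-1}$. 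Since $\det M=1$ (which follows from the snake-move construction) gives $\lambda_1\lambda_2\lambda_3=1$, matching these two traces uniquely identifies the characteristic polynomial. This is the step I expect to be the main obstacle: the quantum ordering forces many $q$-shifts, and one must be meticulous to confirm that the monomials in the trace agree \emph{on the nose} with the products of the prescribed $\lambda_i$'s. Fortunately the centrality established in the first step means all intermediate expressions live in an honest commutative ring, making the final matching a polynomial identity check.

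Finally, having shown the map $J_1\mapsto\overline{C}$, $J_2\mapsto\overline{Y}$, $J_3\mapsto\overline{R}$, $q\mapsto q^{-2}$ is a well-defined algebra homomorphism $\mathsf{H}_{E_6}\to\Mat_3(\pazocal{X}_{\pazocal{Q}_3}^{\nicefrac{1}{3}})$, I would conclude injectivity by exploiting the flatness of $\mathsf{H}_{E_6}$ over $\complex[\mathbb{T}]\otimes\complex[q^{\nicefrac{\pm1}{3}}]$. The centres at $q=1$ are the affine del Pezzo surfaces recalled in the introduction, so after specialisation the image generates a commutative algebra whose spectrum surjects onto such a del Pezzo; lifting this statement back to generic $q$ via flatness shows the kernel is trivial. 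Alternatively, one can exhibit a concrete representation of $\mathsf{H}_{E_6}$ on which the monodromy elements $\overline{C},\overline{Y},\overline{R}$ act by linearly independent operators, which immediately rules out a kernel.
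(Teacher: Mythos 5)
Your plan for the Hecke relations rests on a Cayley--Hamilton-style argument: compute $\mathrm{tr}_q(M)$ and $\mathrm{tr}_q(M^{-1})$, match them to elementary symmetric functions of the three prescribed central scalars, and conclude that $M$ satisfies the degree-$3$ polynomial. This is the core of your proof, and it is where the gap lies: Cayley--Hamilton fails over noncommutative rings. The entries of $\overline{C},\overline{Y},\overline{R}$ do not commute with each other in $\pazocal{X}_{\pazocal{Q}_3}^{\nicefrac{1}{3}}$, so knowing that two ``trace-like'' invariants agree with those of a split characteristic polynomial does not imply the matrix identity $(M-\lambda_1\One)(M-\lambda_2\One)(M-\lambda_3\One)=\Zero$. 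For the triangular matrices $\overline{C}$ and $\overline{Y}$ the identity does in fact hold for a purely formal reason --- the three factors each kill a different diagonal slot, and taken in the correct order (matching the diagonal positions) the product of such triangular matrices vanishes over any ring --- but this is a triangularity argument, not a trace argument, and the order of the factors is essential. For $\overline{R}$, which is not triangular, there is no such shortcut; the paper confirms it by an explicit symbolic computation over the quantum torus (delegated to a Mathematica companion), which is exactly the kind of check your trace heuristic would not replace.

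Two further points. The cyclic relation $\overline{C}\,\overline{Y}\,\overline{R}=q^{\nicefrac{2}{3}}\One$ does not need a telescoping snake-move argument: $\overline{R}$ is \emph{defined} as the suitable quantum inverse of $\overline{C}\,\overline{Y}$ (equivalently its transport factorization), so the cyclic identity is built in; your plan spends effort on something that is definitional in the paper. For injectivity, neither of your alternatives is adequate as stated. Passing to $q=1$ and using the del Pezzo centre would at best control the central subalgebra; it does not lift automatically to the whole noncommutative algebra by flatness. Exhibiting linearly independent images of the generators is far weaker than algebra injectivity. The paper's argument is different and decisive: for $q\neq 1$ the universal $\tilde E_6$-GDAHA is prime, and indeed simple (via Morita equivalence with the spherical subalgebra whose associated graded is a domain, Etingof--Oblomkov--Rains), so any nonzero algebra map out of it is injective; faithfulness of the composite then follows by picking any faithful representation of $\pazocal{X}_{\pazocal{Q}_3}^{\nicefrac{1}{3}}$.
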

In the proof of this theorem, we explicitly check that \emph{all} parameters in the Hecke relations, expressed over $\pazocal{X}_{\pazocal{Q}_3}$, are free. Moreover, by picking any faithful representation of $\pazocal{X}_{\pazocal{Q}_3}$, the theorem gives a faithful representation of $\mathsf{H}_{E_6}$.
As a primer to this result, we recast in cluster terms the aforementioned $\Mat_2(\mathbb{T}^2_q)$-embedding, promoting it to the universal $\mathsf{H}_{D_4}$ by replacing $\mathbb{T}^2_q$ with a more general quantum torus $\pazocal{X}_{\pazocal{Q}_2}$ whose relations are encoded by $\pazocal{Q}_2$.    
Both $\pazocal{X}_{\pazocal{Q}_2}$ and  $\pazocal{X}_{\pazocal{Q}_3}$  appear naturally by quantizing the \textit{moduli space of pinnings}  $\mathcal{P}_{\PGL_n(\real)}(\Sigma_{g,s,m})$, introduced by Goncharov and Shen \cite{Goncharov2022}
as an extension of the moduli space $\mathcal{X}_{\PGL_n(\real)}(\Sigma_{g,s,m})$  of \emph{framed} $\PGL_n(\real)$-local systems on a genus $g$ Riemann surface $\Sigma_{g,s,m}$ with $m$ marked points on the $s$ boundaries.
Specifically, $\pazocal{X}_{\pazocal{Q}_2}$  corresponds to $\mathcal{P}_{\PGL_2}\!(\Sigma_{0,4,0})$ while $\pazocal{X}_{\pazocal{Q}_3}$ to  $\mathcal{P}_{\PGL_3}\!(\Sigma_{0,3,0})$.

Our final result pertains the choice of specific central elements $c_i$ in $\pazocal{X}_{\pazocal{Q}}$ such that the corresponding quotient $\pazocal{X}_{\pazocal{Q}}\slash(c_1-c_1^{(0)},\dots,c_l-c_l^{(0)})$  is given by $\pazocal{X}_{\pazocal{\tilde Q}}$, where the subquiver $\pazocal{\tilde Q}$ is obtained from $\pazocal{Q}$ by a new operation we name \emph{quiver seizure}. 

Let us explain this operation. We call \emph{rhombus} in $\pazocal{Q}$ a $4$-cycle with vertices labelled cyclicly by variables $Z_1,Z_2,Z_3,Z_4\in\pazocal{X}_{\pazocal{Q}}$ such that the indegree and outdegree of both $Z_2$ and $Z_4$ equal one, namely $\mathrm{deg}^+(Z_i)=\mathrm{deg}^-(Z_i)=1$ for $i=2,4$.
\begin{definition}
    The quiver seizure at vertex $Z_i$ is the map
\begin{equation*}
    \pazocal{Q} \mapsto \pazocal{Q}\backslash Z_i,
\end{equation*}
where $\pazocal{Q}\backslash Z_i\subset\pazocal{Q}$ is the full subquiver obtained by removing $Z_i$ together with its two arrows. 
\end{definition}

This operation is illustrated in \Cref{fig:seizure}.
\begin{figure}[!htb]
    \centering
    \includegraphics[width=120mm]{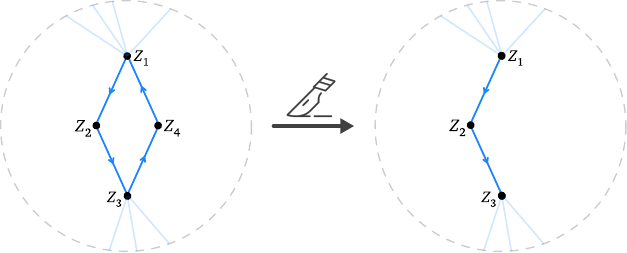}
    \caption{A quiver seizure removing $Z_4$.}\label{fig:seizure}
\end{figure} 

After showing that this operation corresponds to one such quotient in the algebra $\pazocal{X}_{\pazocal{Q}}$, we perform two seizures on the quantum $\pazocal{X}$-torus $\pazocal{X}_{\pazocal{Q}_3}$,  encoded by the diamond-shaped quiver  ${\pazocal{Q}_3}$ in \Cref{fig:quivers}, to obtain the subquiver ${\pazocal{Q}_1}$. The quotient's ideal $I$ evaluates the central elements to the unit. Denoting by $\pazocal{X}_{\pazocal{Q}_1}$ the resulting quantum $\pazocal{X}$-torus, we give a constructive proof of the following (see \Cref{prop-match} for a more detailed statement)
\begin{theorem}\label{thm:last}
    The image via the functor $\mathcal F_q$ of the $\Mat_2(\pazocal{X}_{\pazocal{Q}_2}^{\nicefrac{1}{2}})$-embedding of $H_{D_4}(t,q)$ is isomorphic to     
    the $\Mat_3(\pazocal{X}_{\pazocal{Q}_1}^{\nicefrac{1}{3}})$-embedding  of $H_{E_6}(t,q)$.
\end{theorem}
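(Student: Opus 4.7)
The plan is to compute both sides of the claimed isomorphism explicitly, then produce an intertwiner. First I would apply the functor $\mathcal F_q$ of \Cref{thm:functor} to the $\Mat_2(\pazocal{X}_{\pazocal{Q}_2}^{\nicefrac{1}{2}})$-embedding of $H_{D_4}(t,q)$: this sends the four $2\times 2$ matrices representing $K_1,\dots,K_4$ to a triple of $3\times 3$ matrices representing $J_1,J_2,J_3$, whose entries are dictated by the construction of $\mathcal F_q$ via the specialized basic representation. By \Cref{thm:functor} the resulting $H_{E_6}$ parameters are precisely $\tilde t$ as in \eqref{eq:paramE6}, in particular satisfying $\tilde t_1^{(1)}=\tilde t_1^{(2)}=1$.

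In parallel, I would take the $\Mat_3(\pazocal{X}_{\pazocal{Q}_3}^{\nicefrac{1}{3}})$-embedding of $\mathsf H_{E_6}$ of \Cref{thm:E6-emb} and perform the two quiver seizures reducing $\pazocal{Q}_3$ to $\pazocal{Q}_1$. At the algebra level this is the quotient by the ideal setting the two seized central monomials to $1$, producing explicit matrices $\overline C,\overline Y,\overline R$ over $\pazocal{X}_{\pazocal{Q}_1}^{\nicefrac{1}{3}}$. Reading the Hecke factorisations of \Cref{thm:E6-emb} after this specialization pins down the induced four-parameter subfamily of $H_{E_6}$, and the key numerical check is that these Hecke parameters match $\tilde t$ from \eqref{eq:paramE6} once the generators of $\pazocal{X}_{\pazocal{Q}_1}$ are identified with the appropriate monomials built from $\pazocal{X}_{\pazocal{Q}_2}$ and the auxiliary variables introduced by $\mathcal F_q$. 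The constraint $\tilde t_1^{(1)}=\tilde t_1^{(2)}=1$ should emerge automatically from the fact that the seized vertices sit on the leg responsible for the $J_1$-Hecke relation.

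The final step is to exhibit an explicit conjugation $S\in\GL_3(\pazocal{X}_{\pazocal{Q}_1}^{\nicefrac{1}{3}})$ intertwining the two triples of generators. I expect the main obstacle to be not algebraic but combinatorial: a priori the image of $\mathcal F_q$ lives over a quantum torus built from $\pazocal{Q}_2$-variables, so before any matrix comparison is meaningful one must verify that the monomials playing the role of the $\pazocal{Q}_1$-generators actually satisfy the $q$-commutation relations encoded by the quiver $\pazocal{Q}_1$, which requires a careful exponent count across the two seizures. Once this torus identification is in place, both triples satisfy the same cubic Hecke relations and the same cyclic relation $J_1J_2J_3=q^{-\nicefrac{1}{3}}$, and faithfulness on both sides forces them to be equivalent; reading $S$ off entry by entry from the matched matrices then yields the explicit isomorphism recorded in \Cref{prop-match}.
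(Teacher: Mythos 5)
Your overall strategy (compute the image of $\mathcal F_q$ explicitly, reduce the $\pazocal{Q}_3$-embedding by the two seizures, then match) is the right one and is essentially what the paper does in \Cref{prop-match}. But there is a genuine gap in the step where you say that matching "requires a careful exponent count across the two seizures", i.e.\ that the $\pazocal{Q}_1$-generators can be realised as monomials in the $\pazocal{Q}_2$-variables produced by $\mathcal F_q$. That identification does \emph{not} exist as a quantum torus isomorphism: after rescaling and Killing factorisation, the quiver encoding the $q$-commutations on the $\mathcal F_q$-side is \emph{not} isomorphic to the seized quiver $\pazocal{Q}_1$ (in the paper one is box-shaped and the other star-shaped). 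The two tori are only birationally related, and closing the gap requires a genuine quantum cluster mutation $\mu$ at the vertex $Z^{(b)}_{111}$, carried out in the fraction field $\mathrm{Frac}(\pazocal{X}_{\pazocal{Q}_1})$, in addition to a monomial isomorphism $\iota$ and a $q\mapsto q^{-1}$ involution $\tau$. The mutation introduces the non-monomial factors $(1+q\,Z^{(b)\pm1}_{111})^{\pm1}$, which must be shown to cancel entry by entry (the quantum Laurent phenomenon). This is not a bookkeeping detail; without it your matrices live over non-isomorphic $q$-tori and no amount of exponent-counting will line them up.

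The second issue is your concluding argument: "both triples satisfy the same cubic Hecke relations and the same cyclic relation, and faithfulness on both sides forces them to be equivalent." This inference is not valid. Faithfulness of two embeddings of $\mathsf{H}_{E_6}$ into $3\times3$ matrix algebras over (a priori different, noncommutative) quantum tori does not force the existence of an intertwining $S\in\GL_3$; noncommutative matrix algebras can admit inequivalent faithful embeddings, and indeed once the coordinate rings differ the question of an intertwiner is not even well posed until one has first identified the rings. The paper does not use any such abstract uniqueness; the identification is exhibited explicitly as $(\overline C_{I},\overline Y_{I},\overline R_{I})=\mu\iota\tau(U,\widehat L,\widehat\Pi)$, an \emph{entry-wise} coordinate-ring isomorphism rather than a $\GL_3$-conjugation (a diagonal $\GL_3$-conjugation does appear, but only earlier, in normalising the pseudo-reflection triple before factorisation). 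So to complete the proof you would need to (i) carry out the mutation and verify the Laurent cancellations, and (ii) replace the faithfulness argument with the direct entry-wise verification.
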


The two quiver seizures allowing for this identification can be interpreted as the result of two colliding holes in the
sense of \cite{Chekhov2017}.

The present paper is organised as follows:

In \Cref{sec:functor}, we define the functor of \Cref{thm:functor} by introducing a quantum analogue of both Katz's middle convolution and  Killing's factorization.  In the process, we construct the analogue of the basic representation for $H_{E_6}(\tilde{t},q)$.

In \Cref{sec:HTT}, we recall the basics of higher Teichm\"uller theory and the moduli space of pinnings. We give a brief self-consistent summary of the Fock-Goncharov coordinates for the moduli space of $\PGL_n(\real)$-local systems and their extension to the moduli space of pinnings due to Goncharov and Shen.
We describe the so-called snake calculus, detailing how to compute transport matrices and glue triangles by amalgamations.
En passant, we prove that (positive) Fock-Goncharov coordinates equally define the coordinate ring of the higher bordered cusped Teichm\"uller space
$$
\hbox{Hom}'\left(\pi_{\mathfrak a}(\Sigma_{g,s,m}), \PSL_n(\real) \right){\big\slash\Pi_{i=1}^m B_i},
$$
introduced in \cite{Chekhov2017}.
After giving a recipe to represent fat graph loops by strings of transport matrices, we conclude explaining the quantization of the Fock-Goncharov coordinates and their fractional extensions.

In \Cref{sec:GDAHAreps}, after reproducing the $\Mat_2(\mathbb{T}^2_q)$-embedding of the $\tilde{D}_4$-type GDAHA in the language of \Cref{sec:HTT}, we prove \Cref{thm:E6-emb}. 

In \Cref{sec:cluster-seizure}, we introduce the quiver seizure operation and prove \Cref{thm:last}.

Finally, in \Cref{app:analysis} we summarise the  analytical background behind our constructions while 
\Cref{app:formulae} lightens the reading by collecting the most involved formulae.

\paragraph{Acknowledgements}
\!\!\!The authors are grateful to 
L. Chekhov, P. Etingof, D. Kaplan, A. Neitzke, E. Rains, V. Rubtsov, A. Shapiro and V. Toledano Laredo for many helpful discussions. The research of D. Dal Martello was funded by the EPSRC Studentship 2438494. The research of M. Mazzocco was supported by the Leverhulme Trust Research Project Grant RPG-2021-047.
Neither funding sources had any involvement in the research and/or preparation of this article.

\section{The GDAHA functor}\label{sec:functor}

Let $\Rep(\pazocal{A})$ denote the category of representations of the algebra $\pazocal{A}$: objects are pairs $(\rho,V)$, for $V$ a vector space and $\rho: \pazocal{A} \to \mathrm{End}(V)$ an algebra homomorphism, while arrows are homomorphisms of representations.

In this section, by introducing a quantum analogue of both Katz’s
middle convolution and the Killing factorization, we give a constructive proof of \Cref{thm:functor}.

\subsection{Quantum middle convolution}\label{sec:MC}
Katz \cite{Katz1996} originally defined the middle convolution for local systems as a transformation preserving
rigidity and irreducibility.
We here introduce a noncommutative version of its algebraic analogue as defined in \cite{Dettweiler2007}, tailored to our GDAHA setting.

For convenience, we rescale the generators of $H_{D_4}(t,q)$ as \begin{equation}
    \widehat K_i= \frac{1}{t_i} K_i,\quad i=1,2,3;\qquad \widehat K_4=t_1 t_2 t_3 K_4,
\end{equation}
so that the Hecke relations can be written as follows:
\begin{equation}\label{D4-gen}
\begin{aligned}
    (\widehat K_i-1)\left(\widehat K_i-\frac{1}{t_i^2}\right)&=0,\quad i=1,2,3;\\
    (\widehat K_4- t_1 t_2 t_3 t_4)\left(\widehat K_4-\frac{t_1 t_2 t_3}{t_4}\right)&=0.
\end{aligned}
\end{equation}
In doing so, the cyclic relation is preserved:
\begin{equation}\label{D4-gen1}
\begin{aligned}
   \widehat K_1 \widehat K_2 \widehat K_3 \widehat K_4=q^{\nicefrac{-1}{2}}.
\end{aligned}
\end{equation}
For an object  $(\rho,V)\in\Repfour$, we denote $\rho(\widehat K_i)$ by $\widehat K_i$, namely we 
use the same notation for the generator and its representation $\widehat K_i \in \mathrm{End}(V)$.

Introducing the triple $\widehat{\mathbf{K}}:=(\widehat K_1,\widehat K_2,\widehat K_3)$, the first map we define is 
\begin{equation*}
   \begin{matrix}
       \C \ : & \mathrm{End}(V)^3 & \to & \mathrm{End}(\bigoplus_{3} V)^3\\
       & (\widehat K_1,\widehat K_2,\widehat K_3) & \mapsto & (N_1,N_2,N_3)
   \end{matrix}
\end{equation*}
where
\begin{equation}
N_1=\begin{pmatrix}
    \widehat K_1 & \widehat K_2-1 & \widehat K_3-1\\
0&1&0\\
0&0&1
\end{pmatrix},\phantom{-}
N_2=\begin{pmatrix}
    1&0&0\\
\widehat K_1-1 & \widehat K_2 & \widehat K_3-1\\
0&0&1
\end{pmatrix},\phantom{-}
N_3=\begin{pmatrix}
    1&0&0\\0&1&0\\
\widehat K_1-1 & \widehat K_2-1 & \widehat K_3
\end{pmatrix}.
\end{equation}
Notice that $(\C(\widehat{\mathbf{K}}),\bigoplus_{3} V )$ no longer defines a representation of $H_{D_4}(t,q)$.
The algebra structure in $\mathrm{End}(\bigoplus_{3}V)$ is given by the usual matrix multiplication, combined with the algebra operations in $H_{D_4}(t,q)$.
In particular, the ordering is dictated by that of matrix multiplication.

The next operation is a quantum quotient to a subspace encoding the Hecke properties of $\widehat{\mathbf{K}}$.
\begin{lemma}
The subspace $W\subset \bigoplus_{3} V$, defined as
\begin{equation}        W:=\bigoplus_{i=1}^3
          \ker(\widehat K_i-1),
\end{equation}
is invariant under the action of $N_1,N_2$ and $N_3$.
\end{lemma}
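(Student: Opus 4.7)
The plan is a direct computation exploiting the specific shape of the operators $N_j$: in row $j$ of $N_j$, the diagonal entry is $\widehat K_j$ and the off-diagonal entries are the operators $\widehat K_i-1$ for $i\neq j$, which annihilate precisely the kernel sitting in the $i$-th summand. So the $N_j$ are tailor-made to fix $W$.

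First I would unwind the direct sum by writing a general $w\in W$ as a triple $w=(w_1,w_2,w_3)$, where $w_i$ lies in the $i$-th copy of $V$ and satisfies $(\widehat K_i-1)w_i=0$, equivalently $\widehat K_i w_i=w_i$. Then I would apply $N_1$ explicitly:
\[
N_1\begin{pmatrix} w_1\\ w_2\\ w_3\end{pmatrix}=\begin{pmatrix} \widehat K_1 w_1+(\widehat K_2-1)w_2+(\widehat K_3-1)w_3\\ w_2\\ w_3\end{pmatrix}.
\]
The last two contributions in the top entry vanish thanks to the kernel conditions on $w_2$ and $w_3$, while the first reduces to $w_1$ by the kernel condition on $w_1$. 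The analogous computations for $N_2$ and $N_3$ are identical up to cyclic relabelling of indices.

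Consequently, not only is $W$ invariant under $N_1,N_2,N_3$, but each $N_j$ in fact restricts to the identity on $W$. There is no substantive obstacle to overcome: the cleverness is already baked into the definition of the quantum convolution map $\C$, whose matrix entries are engineered so that vectors satisfying the Hecke-type condition $\widehat K_i w_i=w_i$ pass through the action unaltered. This trivial behaviour on $W$ is precisely what will allow $W$ to play the role of the distinguished subspace one factors out in the subsequent middle-convolution construction leading to the functor $\mathcal{F}_q$.
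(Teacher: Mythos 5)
Your proof is correct and follows exactly the same direct computation as the paper: write $w=(w_1,w_2,w_3)$ with each $w_i\in\ker(\widehat K_i-1)$, apply $N_j$, and observe the off-diagonal terms vanish while the diagonal term fixes $w_j$. Your added observation that $N_j$ in fact restricts to the identity on $W$ (not merely preserving it) is a nice sharpening, though the paper only needs invariance to justify passing to the quotient.
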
 
\begin{proof}
    For any $\mathbf{v}=(v_1,v_2,v_3)\in W$, $N_1(\mathbf{v})=(\widehat{K}_1(v_1)+(\widehat{K}_2-1)(v_2)+(\widehat{K}_3-1)(v_3),v_2,v_3)$. Since $v_i$ is in the kernel of $(\widehat{K}_i-1)$, $(\widehat{K}_2-1)(v_2)+(\widehat{K}_3-1)(v_3)=0$ while $\widehat{K}_1(v_1)=v_1$. Analogous computations can be repeated for $N_2$ and $N_3$. 
\end{proof}
The quantum middle convolution is the restriction of $\C(\widehat{\mathbf{K}})$ to the quotient $(\bigoplus_{3} V) /W$.
To construct this quotient, we take advantage of the properties entailed by the Hecke relations. In particular, each operator $\widehat{K}_i:V \rightarrow V$ carries a natural direct sum decomposition of $V$ into eigenspaces:
\begin{equation}\label{dir-sum-dec}
V=V_i^{(1)}\oplus V_i^{(2)},
\end{equation}
where $V_i^{(1)}$ corresponds to eigenvalue $1$ and $V_i^{(2)}$ to the other eigenvalue $t_i^{-2}$.
\begin{lemma}\label{lm:idempotents}
    The operators $e_i$,  defined as
\begin{equation}
    e_i:=\frac{t_i^2}{1-t_i^2}(\widehat{K}_i-1),
\end{equation}
are idempotent and project onto the eigenspace $V_i^{(2)}$:
\begin{equation*}
     e_i^2=e_i, \quad \widehat K_i e_i =\frac{1}{t_i^2} e_i.
\end{equation*}
Moreover, denoting
\begin{equation}
    \Bar{e}_i:=\frac{t_i^2}{t_i^2-1}(\widehat{K}_i-t_i^{-2})
\end{equation}
the complement idempotent element projecting onto $V_i^{(1)}$, the following relations hold for $i=1,2,3$:
\begin{equation*}
    e_i\Bar{e}_i=\Bar{e}_ie_i=0, \quad e_i+\Bar{e}_i=1.    
\end{equation*}
\end{lemma}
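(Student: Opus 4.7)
The plan is to derive everything from the quadratic Hecke relation
$$(\widehat{K}_i-1)\!\left(\widehat{K}_i-t_i^{-2}\right)=0,$$
which implies $\widehat{K}_i^2=(1+t_i^{-2})\widehat{K}_i-t_i^{-2}$. Since $\widehat{K}_i$ commutes with itself, every polynomial identity involving only $\widehat{K}_i$ can be reduced modulo this relation, so all four claims will collapse to one-line verifications.

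First I would verify $e_i^2=e_i$. Expanding
$(\widehat{K}_i-1)^2$ and writing $(\widehat{K}_i-1)=(\widehat{K}_i-t_i^{-2})+(t_i^{-2}-1)$, the Hecke relation kills the first term and leaves $(\widehat{K}_i-1)^2=\frac{1-t_i^2}{t_i^2}(\widehat{K}_i-1)$. Substituting this into $e_i^2=\frac{t_i^4}{(1-t_i^2)^2}(\widehat{K}_i-1)^2$ gives back $e_i$. For $\widehat{K}_i e_i=t_i^{-2}e_i$, I would use $\widehat{K}_i^2-\widehat{K}_i=t_i^{-2}(\widehat{K}_i-1)$, which follows by subtracting $\widehat{K}_i$ from the Hecke-derived expression for $\widehat{K}_i^2$, and then multiply by the prefactor $\frac{t_i^2}{1-t_i^2}$. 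These two identities already imply that $e_i$ is an idempotent with image contained in the $t_i^{-2}$-eigenspace $V_i^{(2)}$; surjectivity onto $V_i^{(2)}$ is immediate since $e_i$ restricts to the identity there, as one sees by computing $e_i v$ for $v\in V_i^{(2)}$ directly.

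Next I would turn to $\bar e_i$. The relation $e_i+\bar e_i=\One$ is a direct cancellation:
$$e_i+\bar e_i=\frac{t_i^2}{1-t_i^2}\bigl[(\widehat{K}_i-1)-(\widehat{K}_i-t_i^{-2})\bigr]=\frac{t_i^2}{1-t_i^2}\cdot\frac{1-t_i^2}{t_i^2}=\One.$$
For orthogonality, $e_i\bar e_i$ is a scalar multiple of $(\widehat{K}_i-1)(\widehat{K}_i-t_i^{-2})$, which vanishes by the Hecke relation; $\bar e_i e_i$ vanishes by the same argument after swapping the order of the two commuting factors.

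I do not anticipate a real obstacle here: the lemma is a standard consequence of a quadratic minimal polynomial. The only mild subtlety is that $e_i$ and $\bar e_i$ are well-defined only when $t_i^2\neq 1$, which is guaranteed by the running assumption that the parameters are generic (so that the direct sum decomposition \eqref{dir-sum-dec} into distinct eigenspaces actually holds). I would mention this genericity hypothesis once at the start and then proceed with the four short algebraic checks above.
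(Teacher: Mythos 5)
Your proof is correct and follows essentially the same route as the paper: both derive all four identities directly from the quadratic Hecke relation $(\widehat{K}_i-1)(\widehat{K}_i-t_i^{-2})=0$, with the same reductions for $e_i^2$, $\widehat{K}_i e_i$, orthogonality and the sum. The brief remarks you add (surjectivity onto $V_i^{(2)}$ and the genericity condition $t_i^2\neq 1$) are sensible but not substantively different from what the paper does.
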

\begin{proof}
    It all stems from the Hecke relation $(\widehat K_i-1)(\widehat K_i-t_i^{-2})=(\widehat K_i-t_i^{-2})(\widehat K_i-1)=0$:
    \begin{gather*}
        e_i^2=\frac{t_i^4}{(1-t_i^2)^2}(\widehat{K}_i-1)^2=\frac{t_i^4}{(1-t_i^2)^2}(t_i^{-2}-1)(\widehat{K}_i-1)=e_i,\allowdisplaybreaks\\
        \widehat{K}_i e_i=\frac{t_i^2}{1-t_i^2}\widehat{K}_i(\widehat{K}_i-1)=\frac{t_i^2}{1-t_i^2}\frac{1}{t_i^2}(\widehat{K}_i-1)=\frac{1}{t_i^2}e_i,\allowdisplaybreaks\\
        e_i\Bar{e}_i=-\frac{t_i^4}{(1-t_i^2)^2}(\widehat{K}_i-1)(\widehat K_i-t_i^{-2})=0,\allowdisplaybreaks\\
        e_i+\Bar{e}_i=\frac{t_i^2}{1-t_i^2}(\widehat{K}_i-1)+\frac{t_i^2}{t_i^2-1}(\widehat{K}_i-t_i^{-2})=\frac{t_i^2}{t_i^2-1}-\frac{1}{t_i^2-1}=1.\qedhere
    \end{gather*}
\end{proof}
Introducing the operator 
\begin{equation}
    E:=e_1\oplus e_2\oplus e_3\in\mathrm{End}(\bigoplus_{3} V),
\end{equation} 
we can finally give the following
\begin{definition}\label{def-MC}
    Let $\mathsf{VECT}^2$ be the arrow category of vector spaces and $E(V):=e_1(V)\oplus e_2(V) \oplus e_3(V)$. The \emph{quantum middle convolution} is the map
    \begin{equation}
       \MC \ : \Repfour  \to  \mathsf{VECT}^2
   \end{equation}
    sending an object $(\widehat{\mathbf{K}},V)\in\Repfour$ to a triple  $(EN_1,EN_2,EN_3)\in \mathrm{End}(E(V))^3$.
\end{definition}
\begin{proposition}\label{prop:Misfunctor}
    $\MC$ is a functor whose image consists of pseudo-reflections.
\end{proposition}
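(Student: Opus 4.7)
The plan is to dispatch the two claims in sequence: first extend $\MC$ to morphisms and check the functor axioms; then read off the quadratic relation satisfied by each $EN_i$ on $E(V)$ from a two-step filtration.

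I would begin by defining $\MC$ on arrows. Given an intertwiner $\phi:(V,\widehat{\mathbf{K}})\to(V',\widehat{\mathbf{K}}')$ in $\Repfour$, so that $\phi\,\widehat{K}_i = \widehat{K}_i'\,\phi$ for $i=1,2,3$, \Cref{lm:idempotents} immediately yields $\phi\,e_i = e_i'\,\phi$, since $e_i$ is polynomial in $\widehat{K}_i$. Hence the diagonal map $\phi^{\oplus 3}:V^{\oplus 3}\to V'^{\oplus 3}$ restricts to a linear map $E(V)\to E'(V')$, which I would take as $\MC(\phi)$. Applying the same intertwining entry by entry to the operator matrix $N_i$ and pre-composing with $E$ then gives $\MC(\phi)\,(EN_i) = (E'N_i')\,\MC(\phi)$, while identity and composition are inherited from $\phi^{\oplus 3}$. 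Thus $\MC$ is a functor.

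For the pseudo-reflection claim, I would analyse $EN_i$ on $E(V)$ through the two-step filtration $0\subset e_i(V)\subset E(V)$. Directly from the matrix shape of $N_i$, the image of $EN_i-\mathrm{id}$ lies in the $i$-th summand $e_i(V)$: all rows of $N_i$ except the $i$-th are already the identity, while the $i$-th row is $E$-projected into $e_i(V)$. Using \Cref{lm:idempotents}, the induced action on the subspace $e_i(V)$ is $e_i\,\widehat{K}_i|_{e_i(V)} = t_i^{-2}\,\mathrm{id}$, and the induced action on the quotient $E(V)/e_i(V)$ is the identity. These two graded pieces force the Hecke relation $(EN_i-1)(EN_i-t_i^{-2})=0$, so $EN_i$ is a pseudo-reflection: an operator with a degree-two minimal polynomial whose fixed subspace contains the complement $\bigoplus_{j\ne i}e_j(V)$ and whose non-unit eigenvalue $t_i^{-2}$ coincides with that of $\widehat{K}_i$.

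The main obstacle is really the bookkeeping of the block computation: one must verify that the cross-terms $e_i(\widehat{K}_j-1) = \tfrac{1-t_j^2}{t_j^2}\,e_ie_j$ for $j\ne i$ genuinely land in $e_i(V)$ (they do, since $e_i$ is a projection, even though $e_ie_j$ is not itself idempotent) and that they do not contribute to the induced action on the quotient. Both facts are controlled uniformly by $e_j^2=e_j$, $e_j\Bar{e}_j=0$, and $\widehat{K}_ie_i=t_i^{-2}e_i$ from \Cref{lm:idempotents}. Invertibility of $EN_i$ on $E(V)$ is automatic since both eigenvalues $1$ and $t_i^{-2}$ are nonzero.
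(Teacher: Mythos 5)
Your proof is correct and takes essentially the same approach as the paper. For the pseudo-reflection claim, the paper simply computes the explicit restricted matrices $EN_i|_{E(V)}$ in block form (formula \eqref{EN}) and declares them pseudo-reflections by inspection; your two-step filtration $0\subset e_i(V)\subset E(V)$ makes the same observation in a slightly more structural way, extracting the Hecke relation $(EN_i-1)(EN_i-t_i^{-2})=0$ as a consequence of the action being $t_i^{-2}$ on the subspace and the identity on the quotient. For the functorial part, both arguments rest on $\phi\,e_i=e_i'\,\phi$ (since $e_i$ is a polynomial in $\widehat K_i$) to restrict $\phi^{\oplus 3}$ to $E(V)$; the only cosmetic difference is that the paper first constructs $\mathcal{C}(\phi)$ on the unrestricted $(\bigoplus_3 V)^3$ and then restricts, whereas you go directly to $E(V)$. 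One thing worth keeping in mind, mainly for downstream use rather than for the proof itself: the paper's explicit matrices \eqref{EN} are reused in the quantum Killing factorization of \Cref{se:q-killing}, so you will eventually need those formulas anyway.
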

\begin{proof}
    Let us first characterize $\C(\widehat{\mathbf{K}})$ explicitly in $\mathrm{End}(E(V))$: since $\widehat K_i$ acts as the multiplication by $t_i^{-2}$ on $e_i(V)$, it is immediate to obtain the pseudo-reflection formulae
    \begin{equation}\label{EN}
        \begin{aligned}
            {EN_1}_{\big|_{E(V)}}&=\begin{pmatrix}
        t_1^{-2} & (t_2^{-2}-1)e_1 & (t_3^{-2}-1)e_1 \\
        0 & 1 & 0\\
        0 & 0 & 1
    \end{pmatrix},\\
    {EN_2}_{\big|_{E(V)}}&=\begin{pmatrix}
        1 & 0 & 0\\
        (t_1^{-2}-1)e_2 & t_2^{-2} & (t_3^{-2}-1)e_2\\
        0 & 0 & 1    \end{pmatrix},\\
        {EN_3}_{\big|_{E(V)}}&=\begin{pmatrix}
        1 & 0 & 0\\
        0 & 1 & 0\\
        (t_1^{-2}-1)e_3 & (t_2^{-2}-1)e_3 & t_3^{-2}
    \end{pmatrix}.
        \end{aligned}
    \end{equation} 
    
    Now let $(\mathbf{K},V),\,(\mathbf{K}',V')$ be objects in $\Repfour$ and $\phi:V\rightarrow V'$ be a homomorphism of representations, i.e., for $i=1,2,3$ the following diagram commutes:
    \begin{equation}\label{phidiagram}
    \begin{tikzcd}
V \arrow{r}{\phi} \arrow[swap]{d}{K_i} & V' \arrow{d}{K'_i} \\%
V \arrow{r}{\phi}& V'
\end{tikzcd}
    \end{equation} 
Since representations in the same category have the same parameters $(q,t)$, ${\phi}$ also commutes with the rescaled representations.
In order to define the functor on arrows, we first introduce the map 
\begin{equation}\label{Cphi}
\C(\phi):=(\bigoplus_3\phi)^3:(\bigoplus_{3} V)^3\to(\bigoplus_{3} V')^3
\end{equation}
as the arrow in $\mathsf{VECT}^2$ making the diagram
\begin{equation}
    \begin{tikzcd}
    (\bigoplus_{3} V)^3 \arrow{r}{\C(\phi)} \arrow[swap]{d}{(N_1,N_2,N_3)} & (\bigoplus_{3} V')^3 \arrow{d}{(N'_1,N'_2,N'_3)} \\%
    (\bigoplus_{3} V)^3 \arrow{r}{\C(\phi)}& (\bigoplus_{3} V')^3
    \end{tikzcd}
\end{equation}
commute. E.g.,
\begin{equation}\label{phiexample}
    (\bigoplus_3\phi)N_1=\begin{pmatrix}
    \phi\widehat K_1 & \phi\widehat K_2-\phi & \phi\widehat K_3-\phi\\
0&\phi&0\\
0&0&\phi
\end{pmatrix}\overset{\eqref{phidiagram}}{=\joinrel=}\begin{pmatrix}
    \widehat K'_1\phi & \widehat K'_2\phi-\phi & \widehat K'_3\phi-\phi\\
0&\phi&0\\
0&0&\phi
\end{pmatrix}=N'_1(\bigoplus_3\phi).
\end{equation}
Analogously, $(\bigoplus_3\phi)N_i=N'_i(\bigoplus_3\phi)$ holds for $i=2,3$.
Since $(\bigoplus_3\phi)E=E'(\bigoplus_3\phi)$ given that $\phi e_i=e'_i\phi$, the map \eqref{Cphi} restricts to $E(V)^3$ as
\begin{equation}
    \MC(\phi):=(\bigoplus_3\phi)^3\,:\,E(V)^3 \to E'(V')^3,
\end{equation}
defining the functor $\MC$ on arrows.
Indeed, the diagram
\begin{equation}
    \begin{tikzcd}
    E(V)^3 \arrow{r}{\MC(\phi)} \arrow[swap]{d}{(EN_1,EN_2,EN_3)} & E'(V')^3 \arrow{d}{(E'N'_1,E'N'_2,E'N'_3)} \\%
    E(V)^3 \arrow{r}{\MC(\phi)}& E'(V')^3
    \end{tikzcd}
\end{equation}
commutes given that \eqref{phiexample} restricts as
\[
(\bigoplus_3\phi)EN_1=E'(\bigoplus_3\phi) N_1=E'N'_1(\bigoplus_3\phi).
\]
Functoriality is a straightforward consequence of the definitions: for the identity $\mathrm{id}:V\to V$, the equality $\MC(\mathrm{id})=\mathrm{id}$ manifestly holds while given two arrows $\phi:V\to V'$ and  $\psi:V'\rightarrow V''$, $ \MC(\psi\phi)=\MC(\psi)\MC(\phi)$ 
follows from 
\begin{equation*}
    \bigoplus_3(\psi\phi)=(\bigoplus_3\psi)(\bigoplus_3\phi)\in\mathrm{Hom}(E(V),E''(V'')).\qedhere
\end{equation*}
\end{proof}
\begin{remark}
    In the language of Katz \cite{Katz1996}, \Cref{def-MC} is the quantum algebraic analogue of $M(\infty,\pazocal{F})$.
    It corresponds to quotient by only the $\pazocal{K}$ subspace in \cite{Dettweiler2007} or, equivalently, to assume $\lambda$ generic and set it to $1$ after the restriction is performed.    
    The noncommutative construction taking full account of the other subspace is to appear in a different paper: whenever $\pazocal{L}$ is nontrivial, the image of the quantum middle convolution must remain an object in $\Repfour$, with no hope of being mapped to a different GDAHA. 
\end{remark}

\subsection{Quantum Killing factorization}\label{se:q-killing}
This section extends to the noncommutative realm a classical result tracing its origin back to Killing \cite{Coleman1989}:
\begin{lemma}\label{th-q-killing}
For a noncommutative ring $\pazocal{R}$ with unit group $\pazocal{R}^*$, let $R_1,R_2,R_3 \in \Mat_3(\pazocal{R})$ be pseudo-reflections: for $a_{ij}\in\pazocal{R}$,
\begin{equation}
    R_1=\begin{pmatrix}
        a_{11} & a_{12} & a_{13}\\0 & 1 & 0\\0 & 0 & 1
    \end{pmatrix},\quad
    R_2=\begin{pmatrix}
        1 & 0 & 0\\a_{21} & a_{22} & a_{23}\\0 & 0 & 1
    \end{pmatrix},\quad
    R_3=\begin{pmatrix}
        1 & 0 & 0\\0 & 1 & 0 \\a_{31} & a_{32} & a_{33}
    \end{pmatrix}.
\end{equation}
Then, their product is uniquely factorized as
\begin{equation}\label{eq:fact-refl}
    R_1R_2R_3=UL,
\end{equation}
for $U$ upper unitriangular and $L$ lower triangular given by
\begin{equation}
    L-(U^{-1}-1)={A},\quad ({A})_{ij}=a_{ij}.
\end{equation}
Moreover, when $a_{ii}\in \pazocal{R}^*$, $R_i$ is invertible in $\Mat_3(\pazocal{R})$.
\end{lemma}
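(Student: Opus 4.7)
The plan is to define $U$ and $L$ explicitly from the stated formula, verify $UL=R_1R_2R_3$ by computing $U^{-1}R_1R_2R_3=L$ through successive row operations, and finish with an explicit inverse for each $R_i$.

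First, I would extract $U$ and $L$ from the relation $L-(U^{-1}-\One)=A$: since $L$ is lower triangular (including diagonal) while $U^{-1}-\One$ is strictly upper triangular, this is just the unique decomposition of $A\in\Mat_3(\pazocal{R})$ into its lower and strict-upper parts. Explicitly, $L_{ij}=a_{ij}$ for $i\ge j$ and $(U^{-1})_{ij}=-a_{ij}$ for $i<j$, all other entries being $0$ or $1$ as dictated by the unitriangular shape. The inverse $U$ is then well defined over $\pazocal{R}$ without any invertibility hypothesis on entries: $U^{-1}-\One$ is $3\times 3$ strictly upper triangular, hence nilpotent with $(U^{-1}-\One)^3=\Zero$, so $U=\One+(\One-U^{-1})+(\One-U^{-1})^2$ exists as a polynomial expression in the $a_{ij}$, $i<j$. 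Uniqueness of the pair $(U,L)$ satisfying the stated formula is automatic from this unique splitting of $A$.

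Next I would verify $UL=R_1R_2R_3$, equivalently $U^{-1}R_1R_2R_3=L$, by building the product step by step. The key observation is that each pseudo-reflection $R_i$ differs from the identity only in its $i$-th row, so left and right multiplication by $R_i$ acts as a single row or column update. Starting from the left, $U^{-1}R_1$ modifies only rows $1$ and $2$: the entries $-a_{12},-a_{13}$ of $U^{-1}$ use rows $2$ and $3$ of $R_1$ (which are those of the identity) to clean the off-diagonal of row $1$, yielding the new row $(a_{11},0,0)$, while row $2$ picks up a trailing $-a_{23}$. Right multiplication by $R_2$ then leaves the rows $(a_{11},0,0)$ and $(0,0,1)$ unaltered, as they see only the identity rows of $R_2$; row $2$, namely $(0,1,-a_{23})$, becomes $(a_{21},a_{22},a_{23})-a_{23}(0,0,1)=(a_{21},a_{22},0)$. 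Finally, right multiplication by $R_3$ preserves the first two rows (whose last entries are zero) and turns the identity-like third row into $(a_{31},a_{32},a_{33})$. The outcome is precisely $L$.

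For the last sentence of the lemma, I would write down the inverse of each $R_i$ by a direct pseudo-reflection ansatz: when $a_{11}\in\pazocal{R}^*$, the matrix $R_1^{-1}$ with top row $(a_{11}^{-1},-a_{11}^{-1}a_{12},-a_{11}^{-1}a_{13})$ and remaining rows from the identity satisfies $R_1R_1^{-1}=R_1^{-1}R_1=\One$, and analogous formulas (obtained by permuting the distinguished row) cover $R_2$ and $R_3$.

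The only real obstacle is careful noncommutative bookkeeping in the three matrix products, but the row-operation viewpoint turns each multiplication into a transparent substitution, so no cancellations beyond the evident $a_{ij}-a_{ij}=0$ are ever invoked.
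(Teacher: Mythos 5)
Your proposal is correct and follows essentially the same route as the paper: define $U^{-1}$ from the strict upper-triangular part of $A$, verify $U^{-1}R_1R_2R_3=L$, and recover $U$ from nilpotency of $U^{-1}-\One$; your stepwise row/column reduction is just a more economical version of the paper's direct expansion of $R_1R_2R_3$ followed by left multiplication by $U^{-1}$. (Incidentally, your expression $U=\One+(\One-U^{-1})+(\One-U^{-1})^2=3\One-3U^{-1}+U^{-2}$ is the correct one; the paper's displayed formula $U=U^{-2}+3U^{-1}-3\One$ has a sign typo, as one checks against the explicit $U$ it then writes down.)
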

\begin{proof}
    With the ordering within the entries induced by that of the matrix multiplication, by direct computation we obtain
    \begin{multline}
        R_1R_2R_3=\\\begin{pmatrix}
        a_{11}+a_{12}a_{21}+a_{13}a_{31}+a_{12}a_{23}a_{31} & a_{12}a_{22}+a_{13}a_{32}+a_{12}a_{23}a_{32} & a_{13}a_{33}+a_{12}a_{23}a_{33}\\a_{21}+a_{23}a_{31} & a_{22}+a_{23}a_{32} & a_{23}a_{33}\\ a_{31} & a_{32} & a_{33}
    \end{pmatrix}.
    \end{multline}
    Multiplying this formula on the left by a suitable upper unitriangular matrix $U^{-1}$, we obtain a lower triangular result:
    \begin{equation}\label{invU}
        \underbrace{\begin{pmatrix}
        1 & -a_{12} & -a_{13}\\0 & 1 & -a_{23}\\ 0 & 0 & 1
    \end{pmatrix}}_{U^{-1}} R_1R_2R_3=\underbrace{\begin{pmatrix}
        a_{11} & 0 & 0\\a_{21} & a_{22} & 0\\ a_{31} & a_{32} & a_{33}
    \end{pmatrix}}_{L}.
    \end{equation}
To calculate $U$, we use the fact that $U^{-1}$ is unipotent: $(U^{-1}-\One)^3=\Zero$ implies
\begin{equation}
    U=U^{-2}+3U^{-1}-3\One=\begin{pmatrix}
        1 & a_{12} & a_{13}+a_{12}a_{23}\\0 & 1 & a_{23}\\ 0 & 0 & 1
    \end{pmatrix}.
\end{equation}

Assuming that $a_{11}\in \pazocal{R}^*$ and denoting by $a_{11}^{-1}$ its multiplicative inverse,
\begin{equation}
    R_1^{-1}=\begin{pmatrix}
        a_{11}^{-1} & -a_{11}^{-1}a_{12} & -a_{11}^{-1}a_{13}\\0 & 1 & 0\\0 & 0 & 1
    \end{pmatrix}
\end{equation}
and analogous formulae hold for $R_2$ and $R_3$.
When $a_{ii} \in \pazocal{R}^*$ for $i=1,2,3$, the triple product  $R_1R_2R_3$ can be inverted too.
Doing so via the factorization, since $U^{-1}$ is known it suffices to invert $L$:
\begin{equation}\label{invL}
    L^{-1}=\begin{pmatrix}
        a_{11}^{-1} & 0 & 0\\-a_{22}^{-1}a_{21}a_{11}^{-1} & a_{22}^{-1} & 0\\-a_{33}^{-1}a_{31}a_{11}^{-1}+a_{33}^{-1}a_{32}a_{22}^{-1}a_{21}a_{11}^{-1} & -a_{33}^{-1}a_{32}a_{22}^{-1} & a_{33}^{-1}
    \end{pmatrix}.\qedhere
\end{equation}
\end{proof}
Applying \Cref{th-q-killing} to $R_i=  {EN_i}_{\big|_{E(V)}}$ for $i=1,2,3$, the factorization \eqref{eq:fact-refl} takes the  form
\begin{equation}\label{UinMC}
    U=\begin{pmatrix}
        1 & (t_2^{-2}-1)e_1 & (t_3^{-2}-1)e_1+(t_2^{-2}-1)(t_3^{-2}-1)e_1e_2\\0 & 1 & (t_3^{-2}-1)e_2\\ 0 & 0 & 1
    \end{pmatrix},
    \end{equation}
    \begin{equation}\label{LinMC}
    L=\begin{pmatrix}
        t_1^{-2} & 0 & 0\\(t_1^{-2}-1)e_2 & t_2^{-2} & 0\\ (t_1^{-2}-1)e_3 & (t_2^{-2}-1)e_3 & t_3^{-2}
    \end{pmatrix}.
\end{equation}
Moreover, defining $h_i:=(t_i^{-2}-1)$,
\begin{equation}\resizebox{\textwidth}{!}{$
\begin{aligned}
    R_1&R_2R_3=\\
   &\begin{pmatrix}        t_1^{-2}+h_1h_2e_1e_2+h_1h_3e_1(1+h_2e_2)e_3 & t_2^{-2}h_2e_1+h_2h_3e_1(1+h_2e_2)e_3 & t_3^{-2}h_3e_1+t_3^{-2}h_2h_3e_1e_2\\h_1e_2+h_1h_3e_2e_3 & t_2^{-2}+h_2h_3e_2e_3 & t_3^{-2}h_3e_2\\ h_1e_3 & h_2e_3 & t_3^{-2}
    \end{pmatrix}.
    \end{aligned}$}
\end{equation}

\subsection{The functorial composition}

Composing the noncommutative analogues of the Killing factorization and the middle convolution provides a tool to construct representations of a specialized $\Tilde{E}_6$-type GDAHA: 
\begin{lemma}\label{lem:RepE_6}
    Given an object $(\rho,V)\in\Repfour$, let $U$ and $L$ be the quantum Killing factors of $EN_1EN_2EN_3$, where $(EN_1,EN_2,EN_3)$ is the triple of pseudo-reflections \eqref{EN}.
    Denoting the inverse triple product by $\Pi:=(EN_1EN_2EN_3)^{-1}$ , the following relations hold:
    \begin{equation}\label{eq-HULPi}
        \begin{aligned}
            (U-1)(U-1)(U-1)&=0,\allowdisplaybreaks\\
            \left(L-t_1^{-2}\right)\left(L-t_2^{-2}\right)\left(L-t_3^{-2}\right)&=0,\allowdisplaybreaks\\
            \left(\Pi-1\right)\left(\Pi-\sqrt{q}\,{t_1t_2t_3t_4}\right)\left(\Pi-\sqrt{q}\,\frac{t_1t_2t_3}{t_4}\right)&=0. 
        \end{aligned}
    \end{equation}
In particular, the rescaled operators
\begin{equation}\label{rescalings}
    \widehat{L}:=(t_1t_2t_3)^{\nicefrac{2}{3}}L, \quad \widehat{\Pi}:= \frac{1}{q^{\nicefrac{1}{3}}(t_1t_2 t_3)^{\nicefrac{2}{3}}}\Pi
\end{equation}
satisfy the Hecke relations
\begin{equation}
        \begin{aligned}
            \left(\widehat{L}-t_1^{\nicefrac{-4}{3}}t_2^{\nicefrac{2}{3}}t_3^{\nicefrac{2}{3}}\right)\left(\widehat{L}-t_1^{\nicefrac{2}{3}}t_2^{\nicefrac{-4}{3}}t_3^{\nicefrac{2}{3}}\right)\left(\widehat{L}-t_1^{\nicefrac{2}{3}}t_2^{\nicefrac{2}{3}}t_3^{\nicefrac{-4}{3}}\right)&=0,\\
             \left(\widehat{\Pi}-\frac{1}{q^{\nicefrac{1}{3}}t_1^{\nicefrac{2}{3}}t_2^{\nicefrac{2}{3}} t_3^{\nicefrac{2}{3}}}\right)\left(\widehat{\Pi}-
             q^{\nicefrac{1}{6}}t_1^{\nicefrac{1}{3}}t_2^{\nicefrac{1}{3}}t_3^{\nicefrac{1}{3}}t_4\right)\left(\widehat{\Pi}-q^{\nicefrac{1}{6}}\frac{t_1^{\nicefrac{1}{3}}t_2^{\nicefrac{1}{3}}t_3^{\nicefrac{1}{3}}}{t_4}\right)&=0,    
        \end{aligned}
    \end{equation}
together with the cyclic one
\begin{equation}\label{ULPhat}
    U \ \widehat{L} \ \widehat{\Pi}=q^{\nicefrac{-1}{3}}.
\end{equation}
\end{lemma}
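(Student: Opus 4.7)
The plan is to verify the three Hecke relations in \eqref{eq-HULPi} in turn, then deduce the rescaled versions and the cyclic identity from the rescalings \eqref{rescalings}. The first two relations reduce to triangular matrix computations: formula \eqref{UinMC} displays $U$ as upper unitriangular of size three, so $U-\One$ is strictly upper triangular and nilpotent of index at most three, giving $(U-\One)^3=\Zero$ immediately. Formula \eqref{LinMC} displays $L$ as lower triangular with diagonal $\diag(t_1^{-2},t_2^{-2},t_3^{-2})$ whose entries are central scalars. In this setting the standard Cayley--Hamilton-type argument for triangular matrices with central diagonal applies: each factor $L-t_i^{-2}\One$ has vanishing $(i,i)$ entry, and a direct six-entry check on the ordered lower-triangular product $(L-t_1^{-2})(L-t_2^{-2})(L-t_3^{-2})$ yields $\Zero$, since the $t_i^{-2}$ commute with all other matrix entries.

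The substantive point is the cubic relation for $\Pi$. My approach is direct expansion. Writing $\Pi = L^{-1}U^{-1}$ via the explicit inverses \eqref{invU} and \eqref{invL}, and using the centrality of the $t_i$ and $h_i := t_i^{-2}-1$, one obtains $\Pi$ as a $3\times3$ matrix whose entries are noncommutative polynomials in the idempotents $e_1,e_2,e_3$. The cubic product
\[(\Pi-\One)\bigl(\Pi-\sqrt q\,t_1t_2t_3t_4\,\One\bigr)\bigl(\Pi-\sqrt q\,t_1t_2t_3/t_4\,\One\bigr)\]
then expands into a matrix of such polynomials, whose vanishing is forced by two input identities: the idempotency $e_i^2=e_i$ and the Hecke relation \eqref{D4-gen} for $\widehat K_4$, which upon using $\widehat K_4 = q^{-1/2}\widehat K_3^{-1}\widehat K_2^{-1}\widehat K_1^{-1}$ from \eqref{D4-gen1} translates into a nontrivial relation among products of $e_1, e_2, e_3$. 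Conceptually, the two nontrivial roots $\sqrt q\,t_1t_2t_3t_4$ and $\sqrt q\,t_1t_2t_3/t_4$ are the two roots of the quadratic satisfied by $q^{1/2}(\widehat K_1\widehat K_2\widehat K_3)^{-1}$, inherited from the $\widehat K_4$ quadratic, while the extra root $1$ reflects the one-dimensional redundancy introduced by the middle convolution $\C$ that is not absorbed in the passage to $E(V)$.

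Given the three relations, the rescaled Hecke relations for $\widehat L = (t_1t_2t_3)^{2/3}L$ and $\widehat\Pi = \Pi/[q^{1/3}(t_1t_2t_3)^{2/3}]$ follow by multiplying each root by the corresponding rescaling scalar. The cyclic relation \eqref{ULPhat} is then immediate from $UL\Pi=\One$:
\[U\widehat L\,\widehat\Pi \;=\; U\cdot(t_1t_2t_3)^{2/3}L\cdot\frac{\Pi}{q^{1/3}(t_1t_2t_3)^{2/3}} \;=\; \frac{1}{q^{1/3}}\,UL\Pi \;=\; q^{-1/3}\One.\]
The main obstacle is the cubic identity for $\Pi$: because $e_i$ and $\widehat K_j$ do not commute for $i\neq j$, no Cayley--Hamilton shortcut is available, and the verification demands careful bookkeeping of the idempotent products, using the two Hecke inputs above to produce the required cancellations.
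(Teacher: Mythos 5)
Your handling of $U$ and $L$ is correct and essentially the same as the paper's: strict upper-triangularity kills $(U-\One)^3$, and for $L$ the filtration/entry-check works because the diagonal scalars $t_i^{-2}$ are central (your ``six-entry check'' in fact reduces to three nontrivial entries, all vanishing by exactly this centrality). The deduction of the rescaled Hecke relations and of $U\widehat L\widehat\Pi = q^{-1/3}$ from $UL\Pi=\One$ is also correct and matches the paper.

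The issue is the cubic for $\Pi$, and here there is a genuine gap. You correctly identify the two algebraic inputs that \emph{must} underlie any abstract proof --- idempotency of $e_1,e_2,e_3$ and the $\widehat K_4$ Hecke relation pulled through the cyclic relation $\widehat K_1\widehat K_2\widehat K_3\widehat K_4=q^{-1/2}$ --- and your conceptual reading of why the three roots $1,\sqrt q\,t_1t_2t_3t_4,\sqrt q\,t_1t_2t_3/t_4$ appear is sound. But you never actually carry out the expansion, and the closing sentence of your proposal concedes that the bookkeeping is the hard part. This is not a small omission: the entries of $\Pi$ live in the (noncommutative) Peirce algebra built from compositions of the form $e_i f(\widehat K) e_j$ acting between the summands $e_j(V)\to e_i(V)$, and the relation you need to detect --- a cubic identity in a $3\times3$ matrix over that algebra --- is far from transparent modulo idempotency plus one further relation. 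There is no triangularity or commutativity to lean on, as you yourself observe, so ``direct expansion'' is a computation whose feasibility you have not demonstrated. Minor side note: $(\widehat K_1\widehat K_2\widehat K_3)^{-1}=q^{1/2}\widehat K_4$, so the auxiliary operator with eigenvalues $\sqrt q\,t_1t_2t_3t_4,\ \sqrt q\,t_1t_2t_3/t_4$ is $(\widehat K_1\widehat K_2\widehat K_3)^{-1}$ itself, not $q^{1/2}(\widehat K_1\widehat K_2\widehat K_3)^{-1}$; your parenthetical has an extra factor of $q^{1/2}$.

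The paper closes exactly this gap by a different and decisive move that your outline never mentions: it passes to the \emph{faithful} basic representation of $H_{D_4}(t,q)$ on $\complex[z^{\pm1}]$, uses Koornwinder's explicit description (\Cref{lm:tk}) of the eigenspaces $e_i(V)$ to write down concrete formulae for $EN_i$, $U$, $L$, $\Pi$ as operators on Laurent polynomials, and then verifies the cubic for $\Pi$ by symbolic computation. Since the representation is faithful, an identity holding there holds in the algebra. This is the point you are missing: in both your route and the paper's, the cubic for $\Pi$ reduces to a verification, but only the faithful-representation route makes that verification tractable. To complete your argument you would need either to carry out the noncommutative Gröbner-style reduction to zero in the abstract $e$-algebra (there is no evidence this is by-hand feasible --- the paper needed Mathematica even in the concrete model), or to invoke faithfulness of a concrete model as the paper does.
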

\begin{proof}
    By construction, $U\,L\,\Pi=1$ and \eqref{ULPhat} follows immediately. As an \emph{upper} triangular matrix of operators, $U$ automatically satisfies a Hecke relation with its diagonal entries as parameters---which are forced to be unities by the quantum factorization. Being lower triangular, $L$ satisfies the analogous Hecke relation if and only if its diagonal is made of invertible elements---which is the case for the factorization of $(EN_1,EN_2,EN_3)$, see \eqref{LinMC}.
    
    To prove the remaining Hecke relation for $\Pi$, we use the so-called basic (faithful) representation of $H_{D_4}(t,q)$ \cite{Macdonald2003}.
    This is explicitly given by the three operators $T_0,T_1,Z$ acting on the space of Laurent polynomials $f[z]\in V:=\mathbb C[z^{\pm1}]$ as follows:
\begin{gather}
(Zf)[z]:=z\,f[z],
\label{14}\\
(T_1(a,b)f)[z]:=\frac{(a+b)z-(1+ab)}{1-z^2}\,f[z]+
\frac{(1-az)(1-bz)}{1-z^2}\,f[z^{-1}],
\label{15}\\
(T_0(a,b,c,d)f)[z]:=\frac{q^{-1}z((cd+q)z-(c+d)q)}{q-z^2}\,f[z]
-\frac{(c-z)(d-z)}{q-z^2}\,f[qz^{-1}].
\label{16}
\end{gather}
These operators satisfy the algebra relations
\begin{equation}\label{sahi2}
\begin{aligned}
(T_1+ a b)(T_1+1)&=0,\\
(T_0+q^{-1} c d)(T_0+1)&=0,\\
(T_1 Z  +a)(T_1 Z +b)&=0,\\
(qT_0 Z^{-1}+ c)(qT_0 Z^{-1}+d)&=0.
\end{aligned}
\end{equation}
To put these relations in form \eqref{D4-gen}, we set
\begin{equation}
\begin{gathered}\label{hKandt}
    \widehat{K}_1=- T_1, \quad \widehat{K}_2=-aT_1^{-1}Z^{-1},\quad \widehat{K}_3=-T_0,\quad \widehat{K}_4=-\frac{1}{a\sqrt{q}} T_0^{-1}Z,\\
    t_1^2=\frac{1}{ab},\qquad t_2^2=\frac{b}{a}, \qquad t_3^2= \frac{q}{cd}, \qquad
    t_4^2=\frac{c}{d}.
\end{gathered}
\end{equation}
Notice that with this choice, among the new relations we have the cyclic one as \eqref{D4-gen1}.

Despite the fact that the operators $\widehat K_i$ act on 
 the infinite dimensional $\complex$-vector space of Laurent polynomials $\complex[z^{\pm1}]$, we can give an explicit characterization to their eigenspaces:
\begin{lemma}[\cite{Koornwinder2007}]\label{lm:tk} Let ${\mathsf{Sym}}$ denote the space of symmetric Laurent polynomials, 
\begin{equation}
    {\mathsf{Sym}}=\left\{f\in \complex[z^{\pm1}]\,|\, f[z]=f[z^{-1}]\right\},
\end{equation}
and ${\mathsf{Sym}_q}$ denote the space of q-symmetric Laurent polynomials, 
\begin{equation}
    {\mathsf{Sym}_q}=\left\{f\in \complex[z^{\pm1}]\,|\, f[z]=f[qz^{-1}]\right\}.
\end{equation}
Then, 
    \begin{equation*}
    \begin{aligned}
         \widehat{K}_1f[z]&=abf[z] \,\iff \,f[z]\in {\mathsf{Sym}},\\
          \widehat{K}_2f[z]&=\frac{a}{b}f[z] \,\iff \,f[z]=(bz-1)p[z],\quad p[z]\in {\mathsf{Sym}},\\
           \widehat{K}_3f[z]&=\frac{cd}{q}f[z]\, \iff \,f[z]\in {\mathsf{Sym}_q}.
    \end{aligned}       
    \end{equation*}
\end{lemma}
Thanks to \Cref{lm:tk}, we have that 
$$
e_1(V)={\mathsf{Sym}},\quad 
e_2(V)=(bz-1)\,{\mathsf{Sym}},\quad 
e_3(V)={\mathsf{Sym}_q},
$$
which allows to give an explicit restriction for the triple of operators resulting from applying $\MC$ to the $(\widehat{K}_1,\widehat{K}_2,\widehat{K}_3)$ in \eqref{hKandt}. The restricted operators act on a generic element in the quotient $(v_1[z],v_2[z],v_3[z])\in E(V)=e_1(V)\oplus e_2(V)\oplus e_3(V)$ as follows:
    \begin{multline}    
    EN_1(v_1[z],v_2[z],v_3[z])=
    \left( {ab}v_1[z] + \frac{(a - b)(b-z)}{b(a b-1) z} v_2[z]\right.\\
  +\frac{(c d - q)}{(a b-1)q( z^2-1)}
  \left.
  \big((a z-1) (b z-1)v_3[z^{-1}]
  - (a -z) (b-z)v_3[z]\big),
 \vphantom{\frac{(c d - q)}{(a b-1)q( z^2-1)}}v_2[z],\, v_3[z]\right),
\end{multline}
\begin{multline}   
EN_2(v_1[z],v_2[z],v_3[z])=\left(v_1[z],\,-\frac{a(a b-1)(b z-1)}{(a - b)}v_1[z]+\frac{a}{b}v_2[z]\right.\\
\left.+\frac{(c d - q)(b z-1)}{(a -b)q( z^2-1)}\big((a-z)v_3[z]-z(az-1)v_3[z^{-1}]\big),\, v_3[z]\right),
\end{multline}
\begin{multline}    
EN_3(v_1[z],v_2[z],v_3[z])=\left(\vphantom{\frac{a}{b}}v_1[z],\, v_2[z],\right.\\
\frac{(a b-1)}{(c d - q) (q - z^2)}\big(q(c - z)(z-d)v_1[qz^{-1}]
-( c z-q)(d z-q)v_1[z]\big)\\\left.
+\frac{(a-b)}{b(c d - q) (q - z^2)}\big(q(c - z)(d-z)v_2[qz^{-1}]-(c z-q)(d z-q)v_2[z]\big)+ \frac{cd}{q}v_3[z]\right).
\end{multline}
It is immediate to put these operators in matrix form and read off their Killing factors as explained in \Cref{se:q-killing}.
We obtain the following operators:
\begin{multline}\label{eq:Lb-rep}
     L(v_1[z],v_2[z],v_3[z])= \left( {ab}v_1[z],\, \frac{a(a b-1)(b z-1)}{a-b }v_1[z]+ \frac{a}{b}v_2[z],\right.\\
     \left.\frac{(a b-1)}{(c d - q) (q - z^2)}
     \big(q(c-z)(d-z)v_1[qz^{-1}]-(c z-q)(d z-q)v_1[z]\big)\right.\\
   +\left.\frac{(a-b)}{b(c d - q) (q - z^2)}
  \big(q(c-z)(d-z)v_2[qz^{-1}]-(c z-q)(d z-q)v_2[z]\big)+ \frac{q}{cd}v_3[z]\right),
\end{multline}
\begin{multline}\label{eq:Ub-rep}
    U(v_1[z],v_2[z],v_3[z])=
    \left(v_1[z] + 
    \frac{(a - b)(b-z)}{b (a b-1)z}v_2[z]\right.\\+ \frac{(c d - q)}{q(a b-1)b (z^2-1)}
    \big( z(a z-1)(b z-1) v_3[z^{-1}]- \frac{(a-z)(b-z)}{z} v_3[z]\big),\\
  \left.v_2[z] + \frac{(c d - q) (b z-1)}{(a - b) q (z^2-1)}\big((a-z) v_3[z]-z(a z-1) v_3[z^{-1}] \big),\, v_3[z]\right).
\end{multline}
Moreover, we set $\Pi= L^{-1}U^{-1}$, where $L^{-1}$ and $U^{-1}$ are computed as prescribed in \Cref{se:q-killing}:
\begin{multline}\label{eq:Pib-rep}
   \Pi(v_1[z],v_2[z],v_3[z]) = \left(\frac{1}{ab} v_1[z]+\frac{(a-b)(z-b)( b z -1)}{a b^2(a b-1)z} v_2[z]\right.\\
   +\frac{(c d-q)}{a b (a b-1)q(z^2-1)} \big((z-a)(z-b) v_3[z]-(az-1)(bz-1) v_3[z^{-1}]\big),\\
   \frac{(a b-1)(bz-1)}{a(a-b)}v_1[z]+\frac{(b z-1)(b-z+b z^2)}{a b z}v_2[z]\\
   +\frac{(c d-q)(b z-1)}{a(a-b)q(z^2-1)}\big((a z-1) v_3[z^{-1}]-(a-z)v_3[z]\big),\\
     \frac{(ab-1)q}{a c d(c d-q) (z^2-q)}\Big(\frac{q^2}{z}(c-z)(d-z)v_1[qz^{-1}]-z(c z-q)(d z-q)v_1[z]\Big)\\
     +\frac{(a-b)q}{a b c d (cd-q)(z^2-q)}\Big(\frac{q^2}{z^2}(b q-z)(c-z)(d-z)v_2[qz^{-1}]-z(b z-1)(c z-q)(d z-q)v_2[z]\Big)\\
    -\frac{q(c-z)(d-z)(a z-q)}{a c d z (z^2-q)} v_3[qz^{-1}]-\left.\frac{z\left(a q(d-z)+(a c+ q-c z)(q-d z)\right)}{a c d(z^2-q)}\vphantom{\frac{1}{ab}}\right).
\end{multline}

The Hecke relations for $L$ and $U$  
can be easily checked directly using formulae (\ref{eq:Lb-rep}-\ref{eq:Ub-rep}), while $U\,L\,\Pi= 1$ holds by construction.
Verifying the Hecke relation for $\Pi$ is a heavy computation best performed with Mathematica \cite{DalMartello2023}.
This concludes the proof of formulae \eqref{eq-HULPi} with parameters \eqref{hKandt}. \end{proof}
We are now ready to conclude the proof of \Cref{thm:functor}, restated here in more detail:
\begin{theorem}\label{thm:truefunctor}
The quantum Killing factorization of the quantum middle convolution gives a functor of (faithful) representations
\begin{equation}
    \begin{array}{lccc}
\mathcal{F}_q\ : & \ \Repfour &\to & \Rep\big(H_{E_6}(\tilde{t},q)\big),\\
&(\rho,V)&\mapsto&\left(\eta,E(V)\right),
\end{array}
\end{equation}
where
$$
E(V):=e_1(V)\oplus e_2(V) \oplus e_3(V), 
$$
with $e_i$ defined in \Cref{lm:idempotents}, and $\eta:H_{E_6}(\tilde{t},q)\to \mathrm{End}(E(V))$ is the algebra homomorphism 
acting on the generators $J_1,J_2,J_3$ of $H_{E_6}(\tilde{t},q)$
as
\begin{equation}
    \eta(J_1)=U,\quad \eta(J_2)=\widehat{L},\quad \eta(J_3)=\widehat{\Pi},
\end{equation}
with $U,\ \widehat{L}$ and $\widehat{\Pi}$ defined in \Cref{lem:RepE_6} and the 
 parameters $\tilde{t}$ given by \eqref{eq:paramE6}.
\end{theorem}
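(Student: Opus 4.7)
The plan is to assemble $\mathcal{F}_q$ from the pieces already constructed, relying on Proposition~\ref{prop:Misfunctor} and Lemma~\ref{lem:RepE_6} to do the heavy lifting on objects, and then to verify the arrow-side compatibility via the uniqueness built into the Killing factorization.

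On objects, given $(\rho,V)\in\Repfour$, I would apply $\MC$ to obtain the triple of pseudo-reflections $(EN_1,EN_2,EN_3)$ on $E(V)$, invoke Lemma~\ref{th-q-killing} to produce the factors $U$ and $L$ of their product, and set $\Pi:=(EN_1EN_2EN_3)^{-1}$. Lemma~\ref{lem:RepE_6} already furnishes the Hecke and cyclic relations \eqref{eq-HULPi}--\eqref{ULPhat} for the rescaled operators $U,\widehat{L},\widehat{\Pi}$. The content that still needs checking is the parameter match against \eqref{eq:paramE6}: the diagonal of $L$ in \eqref{LinMC}, rescaled by $(t_1t_2t_3)^{\nicefrac{2}{3}}$, gives precisely $\tilde t_2^{(1)},\tilde t_2^{(2)}$ and $1/(\tilde t_2^{(1)}\tilde t_2^{(2)})$; the trivial diagonal of $U$ forces $\tilde t_1^{(1)}=\tilde t_1^{(2)}=1$; and the eigenvalues of $\widehat{\Pi}$ identified in Lemma~\ref{lem:RepE_6} match $\tilde t_3^{(1)},\tilde t_3^{(2)}$ and $1/(\tilde t_3^{(1)}\tilde t_3^{(2)})$. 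Coupled with $U\widehat L\widehat\Pi=q^{\nicefrac{-1}{3}}$, this makes $\eta(J_1)=U,\,\eta(J_2)=\widehat L,\,\eta(J_3)=\widehat\Pi$ a bona fide algebra homomorphism from $H_{E_6}(\tilde t,q)$.

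On arrows I would set $\mathcal{F}_q(\phi):=\MC(\phi)=(\bigoplus_3\phi)\big|_{E(V)}$. Proposition~\ref{prop:Misfunctor} shows $\MC(\phi)$ intertwines the triples $(EN_i)$ and $(E'N'_i)$, hence
\[
\MC(\phi)\,UL \;=\; \MC(\phi)\,EN_1EN_2EN_3 \;=\; E'N'_1E'N'_2E'N'_3\,\MC(\phi) \;=\; U'L'\,\MC(\phi).
\]
Because the formulae \eqref{UinMC}--\eqref{LinMC} express $U$ and $L$ as polynomials in the idempotents $e_i$, and $\phi e_i = e'_i\phi$, the uniqueness of the factorization in Lemma~\ref{th-q-killing} forces separately $\MC(\phi)U=U'\MC(\phi)$ and $\MC(\phi)L=L'\MC(\phi)$; inverting the triple product then yields the analogous relation for $\Pi$. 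Since $q$ and $t$ are shared across the source and target representations, the scalar rescalings in \eqref{rescalings} pass through $\phi$ unchanged, and the intertwining descends to $\widehat L,\widehat\Pi$. Functoriality itself is immediate: $\MC(\mathrm{id}_V)=\mathrm{id}_{E(V)}$ since $\bigoplus_3\mathrm{id}_V$ restricts to the identity, and $\mathcal F_q(\psi\phi)=\mathcal F_q(\psi)\mathcal F_q(\phi)$ follows verbatim from the last computation in Proposition~\ref{prop:Misfunctor}.

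The most delicate step of the theorem---the Hecke identity for $\widehat\Pi$---has already been absorbed into Lemma~\ref{lem:RepE_6} via the basic representation \eqref{14}--\eqref{16} and a symbolic verification. What remains is the faithfulness claim. Since Macdonald's basic representation of $H_{D_4}(t,q)$ is faithful, and the construction $(\rho,V)\mapsto(\eta,E(V))$ only performs a canonical restriction to spectral subspaces followed by a unique factorization, I would conclude faithfulness by showing that no relation beyond \eqref{eq-HULPi}--\eqref{ULPhat} holds among the resulting $U,\widehat L,\widehat\Pi$ on $E(\complex[z^{\pm1}])$. I expect this to be the real obstacle: it requires establishing that, for generic $(a,b,c,d)$, the operators \eqref{eq:Lb-rep}--\eqref{eq:Pib-rep} generate a free copy of $H_{E_6}(\tilde t,q)$ subject only to the specialization \eqref{eq:paramE6}, an independence check that parallels the computation in Lemma~\ref{lem:RepE_6} and is naturally suited to a computer-algebra verification.
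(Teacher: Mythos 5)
Your construction of $\mathcal{F}_q$ on objects and on arrows is essentially the paper's own proof: the object map is Lemma~\ref{lem:RepE_6} plus the parameter bookkeeping you carry out, and the arrow map is $\MC(\phi)=\bigoplus_3\phi$, which intertwines the Killing factors because each entry of $U$, $L$, $\Pi$ is a combination of the idempotents $e_i$ and $\phi e_i=e'_i\phi$. (The detour through "uniqueness of the factorization" is unnecessary once you have noted that the entries of $U$ and $L$ are literally read off from the $e_i$: the intertwining of $U$ and $L$ separately is then immediate, without appealing to any uniqueness statement.) Functoriality on identities and compositions is inherited verbatim from Proposition~\ref{prop:Misfunctor}.

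Where you diverge, and where there is a genuine gap, is faithfulness. You flag this as "the real obstacle" and propose a brute-force independence check: showing by computer algebra that the operators \eqref{eq:Lb-rep}--\eqref{eq:Pib-rep} satisfy no relations beyond the $H_{E_6}(\tilde t,q)$ presentation. That is not a proof but a research plan, and it is also not what the paper does. The paper reduces faithfulness to the argument used for Theorem~\ref{thm:TrueE6-emb}: the non-degenerate GDAHA $\mathsf{H}_{E_6}$ is, for generic parameters, Morita equivalent to its spherical subalgebra, which is a domain, and for $q\neq1$ it is simple with no finite-dimensional representations (Etingof--Oblomkov--Rains). Any nonzero algebra homomorphism out of a simple algebra is injective, so faithfulness is automatic once you know $\eta$ is nonzero. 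This is a short structural argument that completely bypasses the combinatorial independence check you envisage, and it is the missing ingredient in your proposal.
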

\begin{proof}
In \Cref{lem:RepE_6}, we have already proven that $\mathcal F_q$ maps  objects $(\rho,V)\in\Repfour$ to objects $(\eta,E(V))\in \Rep\big(H_{E_6}(\tilde{t},q)\big)$. 
The faithfulness of $(\eta,E(V))$ stems from the same argument used in the proof of \Cref{thm:TrueE6-emb}.

Now, let $(\rho,V)$ and $(\rho',V')$ be two objects in $\Repfour$ and $\phi:V\rightarrow V'$ a homomorphism of representations.
The map of arrows defined in the proof of \Cref{prop:Misfunctor} carries through the factorization: for $i=1,2,3$, $\mathcal{F}_q(\phi):=\bigoplus_3\phi$  gives the commutative diagram
\begin{equation}
    \begin{tikzcd}
    E(V) \arrow{r}{\mathcal{F}_q(\phi)} \arrow[swap]{d}{\eta(J_i)} &  E'(V') \arrow{d}{\eta'(J_i)} \\%
    E(V) \arrow{r}{\mathcal{F}_q(\phi)}& E'(V')
    \end{tikzcd}
\end{equation}
Indeed, each Killing factor's entry is a (linear combination of) composition of entries from the triple $(EN_1,EN_2,EN_3)$ and these suitably commute with $\phi$: as previously observed, $\phi e_i=e'_i\phi$.

To conclude, functoriality holds unaffected: for the identity $\mathrm{id}:V \rightarrow V$, $\mathcal{F}_q(\mathrm{id})=\mathrm{id}$ is manifest while for $\psi:V'\rightarrow V''$ satisfying $\psi \rho'(K_i)=\rho''(K_i)\psi$, $\mathcal{F}_q(\psi\phi)=\bigoplus_3(\psi\phi)=(\bigoplus_3\psi)(\bigoplus_3\phi)$ as maps in $\mathrm{Hom}(E(V),E''(V''))$.
\end{proof}
\begin{remark}
    In principle, one can obtain a wealth of representations of $H_{E_6}(\tilde{t},q)$ by feeding the functor $\mathcal{F}_q$ with the representation theory of $H_{D_4}$. We provide two examples of distinct nature with the basic representation above and the quantum matricial one in \Cref{sec:cluster-seizure}.
    
    Notice also that $\Tilde{t}$ specializes only the two parameters $t_1^{(1)}$ and $t_1^{(2)}$: the remaining four are free, as confirmed by the following inversion formulae
    \begin{equation}
            t_1=q^{\nicefrac{-1}{6}}\tilde{t}^{(1)\nicefrac{-1}{2}}_2\tilde{t}^{(1)\nicefrac{-1}{2}}_3,\quad
            t_2=q^{\nicefrac{-1}{6}}\tilde{t}^{(2)\nicefrac{-1}{2}}_2\tilde{t}^{(1)\nicefrac{-1}{2}}_3,\quad
            t_3=q^{\nicefrac{-1}{6}}\tilde{t}^{(1)\nicefrac{1}{2}}_2\tilde{t}^{(2)\nicefrac{1}{2}}_2\tilde{t}^{(1)\nicefrac{-1}{2}}_3,\quad
            t_4=\tilde{t}^{(1)\nicefrac{1}{2}}_3\tilde{t}^{(2)}_3.        
    \end{equation}
    This specialization is deeply connected with monodromy theory, see \Cref{app:analysis}.
\end{remark}

\section{Higher Teichm\"uller theory}\label{sec:HTT}

In \cite{Chekhov2017}, the concept of bordered cusps arises naturally from the process where two boundary components (or the sides of one boundary component) collide. As a result,  an infinitely thin and infinitely long (in metric sense) domain, called chewing-gum, is obtained. Upon taking the limit of its length to infinity, the chewing-gum breaks into two bordered cusps in the resulting Riemann surface. Closed geodesics that were passing along the chewing gum become arcs, namely infinitely long geodesics that start and terminate at bordered cusps.

Let us now denote by 
$\Sigma_{g,s,m}$ a genus $g$ topological surface with $m$ bordered cusps on the $s$ boundary components having negative Euler characteristic.

In the absence of bordered cusps, the Teichm\"uller space $\mathcal{T}_{\PSL_2(\real)}(\Sigma_{g,s,0})$, i.e., the moduli space of
complex structures on $\Sigma_{g,s,0}$ modulo diffeomorphisms isotopic to the identity, is identified with the space of discrete faithful representations $\pi_1(\Sigma_{g,s,0})\rightarrow\PSL_2(\real)$ modulo 
conjugation.

For $m\geq 1$, the bordered cusped Teichm\"uller space $\widehat{\mathcal{T}}_{\PSL_2(\real)}(\Sigma_{g,s,m})$ was introduced in \cite{Chekhov2017} as 
\begin{equation}
    \widehat{\mathcal{T}}_{\PSL_2(\real)}(\Sigma_{g,s,m})=\hbox{Hom}'\left(\pi_{\mathfrak a}(\Sigma_{g,s,m}), \PSL_2(\real) \right){\Big\slash\Pi_{i=1}^m B_i},
\end{equation}
where  $\pi_{\mathfrak a}(\Sigma_{g,s,m})$ is the fundamental groupoid of arcs in $\Sigma_{g,s,m}$, $\hbox{Hom}'$ denotes the connected component of the representation space consisting entirely of discrete and faithful representations, and $B_1,\ldots,B_m$ is the choice of a Borel unipotent subgroup in $\PSL_2(\real)$ at each bordered cusp. Notice that an element in $\widehat{\mathcal{T}}_{\PSL_2(\real)}(\Sigma_{g,s,m})$ uniquely fixes a metric of constant negative curvature on $\widehat\Sigma_{g,s,m}$, the surface obtained by cutting tubular neighbourhoods of the boundary and adding ideal triangles at the bordered cusps.

Both the Teichm\"uller space and the bordered cusped Teichm\"uller space admit a \emph{higher} generalization by replacing $\PSL_2(\real)$ with any split semi-simple algebraic group G.
For $\mathrm{G}=\PSL_n(\real)$, we denote the \emph{higher bordered cusped Teichm\"uller space} by 
$\widehat{\mathcal{T}}_{\PSL_n(\real)}(\Sigma_{g,s,m})$.

For every bordered cusp, we can assign a marked point on the boundary to which the bordered cusp belongs. This marked point is ``dual'' to the cusp, namely the bordered cusp projects to a pinning over the edge attached to the marked point in the chosen triangulation \cite{Facciotti2024}. Therefore, a Riemann surface of genus $g$  with $m$ bordered cusps on the $s$ boundary components corresponds to a genus $g$ topological surface with $m$ marked points on the $s$ boundary components. For this reason, we denote them both by $\Sigma_{g,s,m}$.

In \Cref{th:HBCT}, we show the coordinate ring of the higher bordered cusped Teichm\"uller space $\widehat{\mathcal{T}}_{\PSL_n(\real)}(\Sigma_{g,s,m})$ can be identified with the positive component of the \emph{moduli space of pinnings} $\mathcal{P}_{\PGL_n(\real)}(\Sigma_{g,s,m})$ introduced by Goncharov and Shen \cite{Goncharov2022}. The latter is an extension by additional data of the moduli space $\mathcal{X}_{\PGL_n(\real)}(\Sigma_{g,s,m})$  of \emph{framed} $\PGL_n(\real)$-local systems \cite{Fock2009}, i.e., principal $\PGL_n(\real)$-bundles with framed flat connections defined by attaching an invariant flag to each marked point.

\subsection{Combinatorial description of the moduli space of pinnings}\label{suse;cmbT}

In this section, we recall the main ingredients of the combinatorial description of $\mathcal{P}_{\mathrm{G}}(\Sigma)$ and its quantization  
\cite{Goncharov2022}.
We restrict to $\mathrm{G}=\PGL_n(\real)$ and closely follow \cite{Chekhov2022} as well as \cite{Coman2015}: since notations have been tailored to our needs, for the sake of the reader the exposition is self-consistent.

In subsection \ref{suse:triangle}, we  describe the moduli space of framed $\PGL_n(\real)$-local systems
for the disk with three marked points $1,2,3$ on its boundary.
We picture such surface $\Sigma_{0,1,3}$ as the equilateral triangle $\triangle_{123}$ in \Cref{fig:transport} and assign a clockwise orientation.

In subsection \ref{suse:amlgam}, we introduce pinnings on $\triangle_{123}$ and explain how to glue triangles together to form the moduli space $\mathcal{P}$ for any Riemann surface $\Sigma_{g,s,m}$.

\subsubsection{The snake calculus on a triangle}\label{suse:triangle}

For a given $n\in\integer_{>0}$, we cover $\triangle_{123}$ by its unique tessellation of $n^2$ identical equilateral triangular tiles, arranged between upward and downward.
Each vertex of this tessellation is labelled by a triple of non-negative integers $(i,j,k)$ by the minimum number of tiles connecting it to the sides of $\triangle_{123}$: $i$ for side $23$, $j$ for side $31$ and $k$ for side $12$ (\Cref{fig:IBT}).
Since $i+j+k=n$, these triples are called \emph{barycentric coordinates}.
\begin{figure}[!htb]
    \centering
    \includegraphics[scale=1]{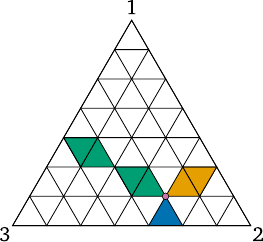}
    \caption{$n=7$ tessellation of $\triangle_{123}$ and barycentric coordinates $({\color{3blue}1},{\color{3green}4},{\color{3ochre}2})$ for the {\color{3pink}pink} vertex, with colors highlighting the tile-counting ruling the coordinatization.  The total $n^2$ tiles are all similar to $\triangle_{123}$ and arranged between $\binom{n+1}{2}$ upward and $\binom{n}{2}$ downward ones.}\label{fig:IBT}
\end{figure}
\\This coordinatization naturally extends to a tile by assigning a triple $(a,b,c)$ to its \emph{center}: 
\begin{itemize}
    \item $a+b+c=n-1$ in the upward case, where vertices appear in the form
    $$\big\{(a+1,b,c),\,(a,b+1,c),\,(a,b,c+1)\big\}$$
    \item $a+b+c=n-2$ in the downward case, where vertices appear in the form
    $$\big\{(a,b+1,c+1),\,(a+1,b,c+1),\,(a+1,b+1,c)\big\}$$
\end{itemize}
\begin{remark}
    Let us highlight the resulting combinatorics: barycentric coordinates are assigned to vertices of the tessellation and centers of the tiles so that the type of object they label can be detected by just inspecting the (integral) result of their sum.
\end{remark}  

Since any flat connection on the contractible $\triangle_{123}$ is trivial, $\mathcal{X}_{\PGL_n(\real)}(\triangle_{123})$ is identified with the space of triples of holonomy-invariant complete flags in $\real^n$.
Snake calculus is a way to construct elementary change-of-basis matrices between projective bases of $\real^n$ induced by a choice of flags in generic position.
Let us detail the combinatorial features of this construction.
\begin{definition}
    A \emph{complete flag} $F_\bullet$ in a vector space $V$ is a collection of consecutively embedded subspaces
    \begin{equation*}
       \{0=F_0 \subset F_1 \subset \ldots \subset F_{n-1} \subset F_n=V\},\quad \dim(F_k)=k.
    \end{equation*}
\end{definition}
Let $F^1_\bullet,F^2_\bullet,F^3_\bullet$ be the (generic) complete flags in $\real^n$ attached to the vertices of $\triangle_{123}$.
To any center $(a,b,c)$ of a tile in the tessellation of $\triangle_{123}$, we attach the subspace $F^1_{n-a}\cap F^2_{n-b}\cap F^3_{n-c}$ : a line $\lambda_{abc}$ for upward tiles and a plane $\pi_{abc}$ for downward ones.
By construction, a plane $\pi_{abc}$ contains the lines $\lambda_{(a+1)bc}, \lambda_{a(b+1)c}, \lambda_{ab(c+1)}$ attached to the three upward tiles adjacent to the downward one it is attached to.
Let us visually highlight this correspondence: after labelling each center with its subspace, we stick on each plane a grey upward triangle whose vertices match the three coplanar lines it contains.
\Cref{fig:config} gives a step-by-step display of the resulting configuration on $\triangle_{123}$.  
\begin{figure}[!htb]
    \centering
    \includegraphics[width=\linewidth]{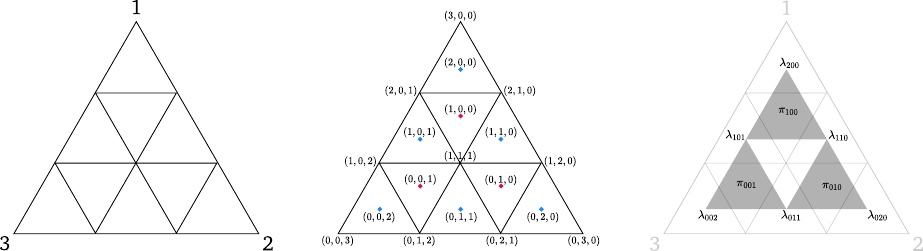}
    \caption{For $n=3$, from left to right: tessellation of $\triangle_{123}$, barycentric coordinates for vertices of the tessellation and centers of the tiles, configuration of subspaces with the grey triangles (and one white triangle surrounded by them).}\label{fig:config}
\end{figure}

For the rest of this section, we forget the tessellation focusing on these $\binom{n}{2}$ grey triangles---and the resulting $\binom{n-1}{2}$ white downward ones among them---looking at specific paths called snakes that run over their sides. Notice that the upward grey and downward white triangles gives precisely the $n-1$ tessellation of a triangle connecting $\{\lambda_{(n-1)00},\lambda_{0(n-1)0},\lambda_{00(n-1)}\}$.
\begin{definition}
    A \emph{snake} $\mathbf{p}$ is an oriented piece-wise path composed by exactly $n-1$ sides of grey triangles, which starts from a tile sharing a vertex with $\triangle_{123}$ and ends on a tile in contact with the opposite side.
\end{definition}
Notice that the length requirement implies no segment can be parallel to the snake's target side of $\triangle_{123}$. 
We call $\mathbf{p}_{IJ}$, the unique snake running parallel to side $IJ$ of $\triangle_{123}$, a $\partial$-snake.
\begin{figure}[!htb]
    \centering
    \includegraphics[width=65mm]{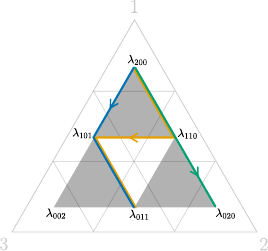}
    \caption{For $n=3$, two snakes and a forbidden {\color{3ochre}ochre} path. The {\color{3green}green} snake $\mathbf{p}_{12}$ has basis $\{\mathbf{v}_{200},\mathbf{v}_{110},\mathbf{v}_{020}\}$ in $\real^3$, whose vectors satisfy $\mathbf{v}_{101}=\mathbf{v}_{110}+\mathbf{v}_{200}, \mathbf{v}_{011}=\mathbf{v}_{020}+\mathbf{v}_{110}$. The {\color{3blue}blue} snake has basis $\{\mathbf{v}_{200},\mathbf{v}_{101},\mathbf{v}_{011}\}$, with $\mathbf{v}_{110}=\mathbf{v}_{101}-\mathbf{v}_{200}, \mathbf{v}_{002}=\mathbf{v}_{011}+\mathbf{v}_{101}$.}\label{fig:snakes}
\end{figure}

Let Greek letters denote a generic triple of barycentric coordinates: e.g., $\lambda_{ijk}$ is equally denoted by $\lambda_\alpha$.
As shown in \Cref{fig:2projbasis}, each segment of a snake connects two vertices $\alpha, \beta$ of a grey triangle.
The corresponding lines  $\lambda_\alpha,\lambda_\beta$ are coplanar to $\lambda_\gamma$, where $\gamma$ is the remaining vertex of the grey triangle.
By coplanarity, a choice of vector $\mathbf{v}_\alpha\in \lambda_\alpha$ uniquely determines $\mathbf{v}_\beta\in\lambda_\beta$ by the following orientation rule
\begin{equation}\label{rule}
    \lambda_\gamma\ni\mathbf{v}_\gamma=
    \begin{cases}
        \mathbf{v}_\beta+\mathbf{v}_\alpha,\quad\circlearrowright\\
        \mathbf{v}_\beta-\mathbf{v}_\alpha,\quad\circlearrowleft.
    \end{cases}
\end{equation}
\begin{figure}[!htb]
    \centering
    \includegraphics[width=90mm]{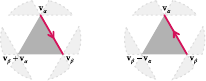}
    \caption{Segments of two oppositely oriented snakes. The
vertices of the grey triangle correspond to 3 coplanar lines $\lambda_\alpha,\lambda_\beta,\lambda_\gamma$ and $\mathbf{v}_\gamma=\mathbf{v}_\beta\pm\mathbf{v}_\alpha$ depending on whether the segment
is oriented clockwise or counterclockwise with respect to its grey triangle.}\label{fig:2projbasis}
\end{figure}

Therefore, a snake inductively determines a projective basis of $\real^n$: chosen the first vector and iteratively applying the rule, the resulting $n$ vectors are defined up to a global scaling factor. Their linear independence is a consequence of the flags being assumed generic.
Given any two snakes, a change-of-basis matrix maps between their corresponding projective bases.
The snake calculus gives a simple recipe to write down these matrices: since the elementary moves in \Cref{fig:snakemoves} suffice to decompose any snake transformation, they are constructed out of the elementary building blocks in the following
\begin{figure}[!htb]
    \centering
    {\includegraphics[width=\textwidth]{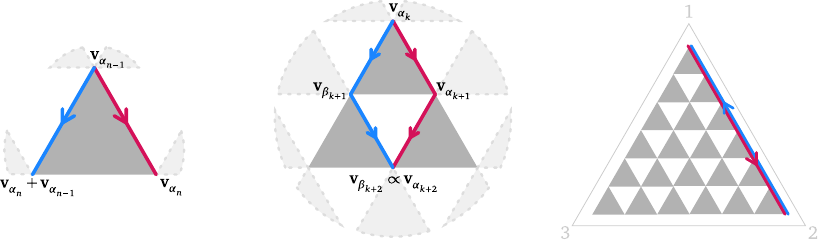}}
    \caption{From left to right, elementary snake moves \MakeUppercase{\romannumeral 1},\MakeUppercase{\romannumeral 2} and \MakeUppercase{\romannumeral 3} mapping {\color{2red}red} to {\color{2blue}blue} segments of a sample snake with $\mathbf{v}_1\in\lambda_{n00}$. Notice that move \MakeUppercase{\romannumeral 1} can only be performed on the last segment of a snake, i.e. when no subsequent segments can be affected. In this sense, move \MakeUppercase{\romannumeral 2} can be thought of as the extension of move \MakeUppercase{\romannumeral 1} to any other segment.}\label{fig:snakemoves}
\end{figure}
\begin{definition}\label{definition:blocks}
    Let $E_{rs}$ be the matrix unit, i.e., $(E_{rs})_{ij}=\delta_{ri}\delta_{sj}$.
    For $\One$ denoting the identity matrix, $k\in\{1,\ldots,n\}$ and a parameter $t\in\real_{>0}$\,, 
    define the $\SL_n(\real_{>0})$ matrices 
    \begin{align}
        L_k&=\One+E_{k+1,k},\\   
        H_k(t)&=t^{-\frac{n-k}{n}}\mathrm{diag}\big(\underbrace{1,\ldots,1}_\text{$k$ times},t,\ldots,t\big),
    \end{align}
    and the $\SL_n(\real)$ antidiagonal matrix
    \begin{equation}
        (S)_{ij}=(-1)^{n-i}\delta_{i,n+1-j}.
    \end{equation}
\end{definition}
Let us sketch the origin of this advantageous feature, adapting Appendix A in \cite{Coman2015}.
Move \MakeUppercase{\romannumeral 1} flips the last segment pivoting its source center across a grey triangle, by rule \eqref{rule} yielding:
\begin{equation}
    \begin{bmatrix}
        \mathbf{v}_{\alpha_1}\\\vdots\\\mathbf{v}_{\alpha_{n-1}}\\\mathbf{v}_{\alpha_n}
    \end{bmatrix}
    \mapsto
    \begin{bmatrix}
        \mathbf{v}_{\alpha_1}\\\vdots\\\mathbf{v}_{\alpha_{n-1}}\\\mathbf{v}_{\alpha_n}+\mathbf{v}_{\alpha_{n-1}}
    \end{bmatrix}
    =\underbrace{\left(
\begin{array}{cc}
\One_{n-2} &  \\
 & \begin{matrix}
        1 & 0 \\
        1 & 1
    \end{matrix}
\end{array}\right)}_{L_{n-1}}
    \begin{bmatrix}
        \mathbf{v}_{\alpha_1}\\\vdots\\\mathbf{v}_{\alpha_{n-1}}\\\mathbf{v}_{\alpha_n}
    \end{bmatrix}.
\end{equation}
Move \MakeUppercase{\romannumeral 2} flips any two non parallel consecutive segments.
Analogously to move \MakeUppercase{\romannumeral 1}, sweeping the grey triangle yields  $\mathbf{v}_{\alpha_{k+1}}\mapsto\mathbf{v}_{\beta_{k+1}}=\mathbf{v}_{\alpha_{k+1}}+\mathbf{v}_{\alpha_{k}}$. However, this drags the second segment in a flip that pivots its target center: we expect the transformed $2$-segment  portion of the snake to end on a different vector within the same line, i.e. $\mathbf{v}_{\beta_{k+2}}\propto\mathbf{v}_{\alpha_{k+2}}$. Denoting by $Z$ the proportionality constant, restricted to be strictly positive (see \Cref{rmk:positive}), the full move reads as
\begin{equation}\label{moveII}
    \begin{bmatrix}
    \mathbf{v}_{\alpha_1}\\\vdots\\
    \mathbf{v}_{\alpha_{k}} \\
\mathbf{v}_{\alpha_{k+1}} \\ \mathbf{v}_{\alpha_{k+2}}\\
    \vdots\\
    \mathbf{v}_{\alpha_n}
    \end{bmatrix}
    \mapsto
    \begin{bmatrix}\mathbf{v}_{\alpha_1}\\\vdots\\ \mathbf{v}_{\alpha_{k}} \\ \mathbf{v}_{\beta_{k+1}} \\ \mathbf{v}_{\beta_{k+2}}\\
    \vdots\\
    \mathbf{v}_{\alpha_n}
    \end{bmatrix}
    =\left(\begin{array}{ccc}
\One_{k-1} & & \\
 & \begin{matrix}
    1 & 0 & 0\\
    1 & 1 & 0\\
    0 & 0 & Z
  \end{matrix} &  \\ 
   &  & Z\ \One_{n-k-2}\end{array}\right)
    \begin{bmatrix}
    \mathbf{v}_{\alpha_1}\\\vdots\\
    \mathbf{v}_{\alpha_{k}} \\
\mathbf{v}_{\alpha_{k+1}} \\ \mathbf{v}_{\alpha_{k+2}}\\
    \vdots\\
    \mathbf{v}_{\alpha_n}
    \end{bmatrix}.
\end{equation}
Since the elementary blocks $L_i$ and $H_j(t)$ commute for $i\neq j$, this change-of-basis matrix can be factorized, inside $\PGL_n(\real)$, as $L_kH_{k+1}(Z)$ and the move as a whole is well-defined. 
Finally, move \MakeUppercase{\romannumeral 3}, inverting a 
 clockwise oriented $\partial$-snake, is unravelled tracking segment reversals:
\begin{equation}
    \begin{bmatrix}
        \mathbf{v}_{\alpha_1}\\\mathbf{v}_{\alpha_2}\\\vdots\\\mathbf{v}_{\alpha_{n-1}}\\\mathbf{v}_{\alpha_n}
    \end{bmatrix}
    \mapsto
    \begin{bmatrix}
        \mathbf{v}_{\alpha_n}\\-\mathbf{v}_{\alpha_{n-1}}\\\vdots\\(-1)^{n-2}\mathbf{v}_{\alpha_2}\\(-1)^{n-1}\mathbf{v}_{\alpha_1}
    \end{bmatrix}
    =S^{-1}
    \begin{bmatrix}
        \mathbf{v}_{\alpha_1}\\\vdots\\\mathbf{v}_{\alpha_{n-1}}\\\mathbf{v}_{\alpha_n}
    \end{bmatrix}.
\end{equation}
Notice that $S^{-1}=(-1)^{n-1}S=S^T$.

There are $\binom{n-1}{2}$ type \MakeUppercase{\romannumeral 2} moves, one for each downward white triangle, and the corresponding proportionality constants are the so-called (positive) Fock-Goncharov variables.
Topologically, notice that Fock-Goncharov variables are in bijection with inner vertices of the tessellation of $\triangle_{123}$: there is exactly one such vertex inside any white triangle. We thus denote them $Z_{ijk}$ by the barycentric coordinates of the unique corresponding vertex, $i,j,k\in\integer_{>0}$.

Taking advantage of this calculus, the general formula for the change-of-basis matrix corresponding to the $\partial$-snake map $\mathbf{p}_{12}\mapsto\mathbf{p}_{31}$ reads
\begin{equation}\label{coarseT1}
    \PSL_n(\real)\ni C_{12\rightarrow31}=S\ L_{n-1}\prod_{j=1}^{n-2}\bigg[\prod_{i=1}^j\big[L_{n-i-1}H_{n-i}(Z_{n-j-i,i,j})\big]L_{n-1}\bigg].
\end{equation}
\begin{example}
For $n=2$, there are no inner vertices and formula 
\eqref{coarseT1} simplifies to
\begin{equation}
    C_{12\rightarrow31}=S\ L_{1}=\begin{pmatrix}
        0 & -1\\1 & 0
    \end{pmatrix}\begin{pmatrix}
        1 & 0\\1 & 1
    \end{pmatrix}.
\end{equation}

For $n=3$, there is just a single Fock-Goncharov variable $Z_{111}$:
\begin{equation}
\begin{aligned}
    C_{12\rightarrow31}&=S\ L_{2} L_{1} H_2(Z_{111})L_{2}\\
    &=\begin{pmatrix}
        0 & 0 & 1\\0 & -1 & 0\\1 & 0 & 0
    \end{pmatrix}\begin{pmatrix}
        1 & 0 & 0\\0 & 1 & 0\\0 & 1 & 1
    \end{pmatrix}\begin{pmatrix}
        1 & 0 & 0\\1 & 1 & 0\\0 & 0 & 1
    \end{pmatrix}\begin{pmatrix}
        Z_{111}^{\nicefrac{-1}{3}} & 0 & 0\\0 & Z_{111}^{\nicefrac{-1}{3}} & 0\\0 & 0 & Z_{111}^{\nicefrac{2}{3}}
    \end{pmatrix}\begin{pmatrix}
        1 & 0 & 0\\0 & 1 & 0\\0 & 1 & 1
    \end{pmatrix}.
\end{aligned}
\end{equation}
\begin{figure}[!htb]
    \centering
    \includegraphics[width=\linewidth]{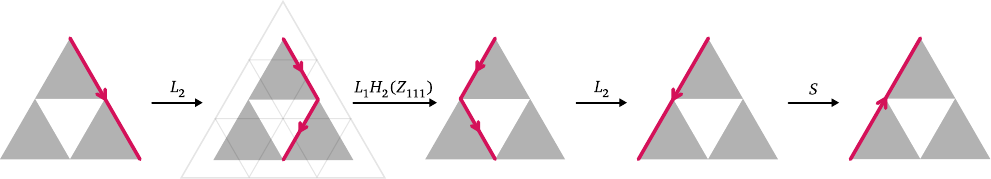}
    \caption{Sequence of snake moves factorizing $C_{12\rightarrow31}$ for $n=3$. At step 2, the tessellation's only inner vetrex of barycentric coordinates $(1,1,1)$ labels the Fock-Goncharov variable $Z_{111}$. At step 4, the $\partial$-snake runs counterclockwise and an $S$ matrix is needed.}\label{fig:T1n2}
\end{figure}
\end{example}

\subsubsection{Transport matrices and amalgamation}\label{suse:amlgam}
In order to glue triangles together, we need to attach additional variables to the sides of $\triangle_{123}$.
This is formally done by extending $\mathcal{X}_{\PGL_n(\real)}(\triangle_{123})$ to the \emph{moduli space of pinnings} $\mathcal{P}_{\PGL_n(\real)}(\triangle_{123})$, in which each oriented side of the triangle comes equipped with a $1$-dimensional subspace of $\real^n$ in generic position to the corresponding pair of flags.

A \emph{pinning} on side $IJ$ is given by the triple $\{F^I_\bullet,F^J_\bullet,\Lambda\}$ with $\Lambda\subset\real^n,\mathrm{dim}(\Lambda)=1$. 
A choice of $\Lambda$ equals a choice of projective basis $\{\mathbf{v}_{\alpha_1},\ldots\mathbf{v}_{\alpha_n}\}$ in $\real^n$, a vector for each vertex along $IJ$ from the corresponding line, via the condition $\sum_{i=1}^n\mathbf{v}_{\alpha_i}\in \Lambda$.
Therefore, each oriented side $IJ$ comes with two projective bases, one from the pinning and the other from the corresponding $\partial$-snake $\mathbf{p}_{IJ}$, and the unimodular change-of-basis matrix between them takes the form $\prod_{i=1}^{n-1}H_i(t_i)$.
These $n-1$ proportionality constants are thought of as additional Fock-Goncharov variables $Z_{ijk}$, labelled by the vertices on the interior of $IJ$.
Adding these extra variables from all three sides to the ones birthed by type \MakeUppercase{\romannumeral 2} moves, we get a total of
$3(n-1)+\binom{n-1}{2}=\frac{(n+4)(n-1)}{2}$ Fock-Goncharov variables (\Cref{fig:FG}).
As a whole, they are in bijection with the tessellation's vertices except $1,2,3$.
\begin{remark}\label{rmk:positive}
    We impose all Fock-Goncharov variables to be strictly positive: this restriction is known \cite{Chekhov2022} to provide a parametrization of the moduli space describing its positive connected component $\mathcal{P}^+_{\PGL_n(\real)}(\Sigma)$. In particular, this choice makes the transport matrices \eqref{T-formulae} genuine elements of $\PSL_n(\real)\subseteq\PGL_n(\real)$.
\end{remark}
\begin{figure}[!htb]
    \centering
    \includegraphics[width=.9\textwidth]{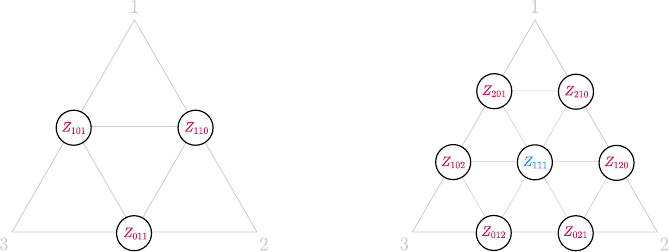}
    \caption{Fock-Goncharov variables $Z_\alpha$ for $\mathcal{P}_{\PGL_2(\real)}(\triangle_{123})$ on the left and $\mathcal{P}_{\PGL_3(\real)}(\triangle_{123})$ on the right. {\color{2blue}Blue} variables are associated with moves \MakeUppercase{\romannumeral 2} and {\color{2red}red} ones with side pinnings.}\label{fig:FG}
\end{figure}

We finally define the triple of {\em transport matrices}  $T_i$ in \Cref{fig:transport}.
They correspond to the special change-of-basis matrices between the pinning-induced projective bases associated to the oriented sides of $\triangle_{123}$. For example, $T_1$ maps the pinning of side $12$ first to the  snake $\mathbf{p}_{12}$, then maps the snake $\mathbf{p}_{12}$ to the snake $\mathbf{p}_{31}$, and finally maps the snake $\mathbf{p}_{31}$ to the pinning of side $31$.
\begin{figure}[!htb]
    \centering
    \includegraphics[scale=2]{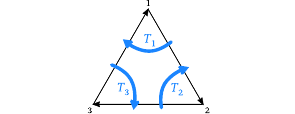}
    \caption{The triple of transport matrices on the oriented triangle $\triangle_{123}$. $T_1$ corresponds to the map of oriented sides $12\mapsto31$, $T_2$ to $23\mapsto12$ and $T_3$ to $31\mapsto23$.}\label{fig:transport}
\end{figure}
\begin{definition}
The transport matrices $T_1,T_2,T_3$ are the following $\PSL_n(\real)$ matrices
\begin{equation*}
    T_1
        =S\prod_{k=1}^{n-1}\Big[H_{n-k}(Z_{k,0,n-k})\Big]\ \ L_{n-1}\prod_{j=1}^{n-2}\Bigg[\prod_{i=1}^j\Big[L_{n-i-1}H_{n-i}(Z_{n-j-i,i,j})\Big]L_{n-1}\Bigg]\ \ \prod_{k=1}^{n-1}H_k(Z_{n-k,k,0}),
\end{equation*}
\begin{equation}\label{T-formulae}
    \begin{aligned}
    & T_2
        =S\prod_{k=1}^{n-1}\Big[H_{n-k}(Z_{n-k,k,0})\Big]\ \ L_{n-1}\prod_{j=1}^{n-2}\Bigg[\prod_{i=1}^j\Big[L_{n-i-1}H_{n-i}(Z_{j,n-j-i,i})\Big]L_{n-1}\Bigg]\ \ \prod_{k=1}^{n-1}H_k(Z_{0,n-k,k}),\\
    & T_3=S\prod_{k=1}^{n-1}\Big[H_{n-k}(Z_{0,n-k,k})\Big]\ \ L_{n-1}\prod_{j=1}^{n-2}\Bigg[\prod_{i=1}^j\Big[L_{n-i-1}H_{n-i}(Z_{i,j,n-j-i})\Big]L_{n-1}\Bigg]\ \ \prod_{k=1}^{n-1}H_k(Z_{k,0,n-k}).
    \end{aligned}
    \end{equation}
\end{definition}

As expected, a direct computation confirms that $T_1T_2T_3=\One$.
Together with their inverses, $T_1, T_2$ and $T_3$ suffice to map between any two sides.
Notice that, for the permutation map $\sigma$ acting on matrices $T(Z_{ijk})$ depending on Fock-Goncharov variables $Z_{ijk}$ as
\begin{equation}
    \sigma T(Z_{ijk}):=T(Z_{jki}),
\end{equation}
we have $T_2=\sigma T_1$ and $T_3=\sigma^2 T_1$.

We introduce the following shorthand notation:
$$
T_1=H^{31}_{out}\ C_{12\rightarrow31}\ H^{12}_{in}
$$
where
\begin{align}
        &H^{12}_{in}:=\prod_{k=1}^{n-1}H_k(Z_{n-k,k,0}),\\
        &H^{31}_{out}:=(H^{31}_{in})^{-1}=\prod_{k=1}^{n-1}\Big[H^{-1}_k(Z_{k,0,n-k})\Big]=S\prod_{k=1}^{n-1}\Big[H_{n-k}(Z_{k,0,n-k})\Big]S^{-1}.
    \end{align}
\begin{remark}
    These diagonal factors modifying the change-of-basis matrix can be visualized as passing from the side's pinning to the inner $\partial$-snake and vice versa: $H^{12}_{in}$ for the oriented side $12$ the path crosses to enter the triangle (pinning-to-snake), $H^{31}_{out}$ for the oriented side of exit (snake-to-pinning). 
\end{remark}
\begin{example}\label{ex-tr}
    Explicitly, for $n=2$ and $n=3$ we get
    \begin{equation}\label{2dimT1}
        T_1=S\ H_1(Z_{101})L_{1}H_1(Z_{110})=\setlength\arraycolsep{3pt}\begin{pmatrix}
            -Z_{101}^{\nicefrac{1}{2}}Z_{110}^{\nicefrac{-1}{2}} & -Z_{101}^{\nicefrac{1}{2}}Z_{110}^{\nicefrac{1}{2}}\\[.5em]
            Z_{101}^{\nicefrac{-1}{2}}Z_{110}^{\nicefrac{-1}{2}} & 0
        \end{pmatrix},   
    \end{equation}
    \begin{equation}
        \begin{aligned}\label{3dimT1}
            T_1&=S\ H_2(Z_{102})H_1(Z_{201})L_{2} L_{1} H_2(Z_{111})L_{2}H_1(Z_{210})H_2(Z_{120})\\
            &=\setlength\arraycolsep{3pt}\begin{pmatrix}
                Z_{102}^{\nicefrac{2}{3}}Z_{111}^{\nicefrac{-1}{3}}Z_{120}^{\nicefrac{1}{3}}Z_{201}^{\nicefrac{1}{3}}Z_{210}^{\nicefrac{-2}{3}} & Z_{102}^{\nicefrac{2}{3}}(Z_{111}^{\nicefrac{-1}{3}}+Z_{111}^{\nicefrac{2}{3}})Z_{120}^{\nicefrac{-1}{3}}Z_{201}^{\nicefrac{1}{3}}Z_{210}^{\nicefrac{1}{3}} & Z_{102}^{\nicefrac{2}{3}}Z_{111}^{\nicefrac{2}{3}}Z_{120}^{\nicefrac{2}{3}}Z_{201}^{\nicefrac{1}{3}}Z_{210}^{\nicefrac{1}{3}} \\[.5em]
                -Z_{102}^{\nicefrac{-1}{3}}Z_{111}^{\nicefrac{-1}{3}}Z_{120}^{\nicefrac{-1}{3}}Z_{201}^{\nicefrac{1}{3}}Z_{210}^{\nicefrac{-2}{3}} & -Z_{102}^{\nicefrac{-1}{3}}Z_{111}^{\nicefrac{-1}{3}}Z_{120}^{\nicefrac{-1}{3}}Z_{201}^{\nicefrac{1}{3}}Z_{210}^{\nicefrac{1}{3}} & 0 \\[.5em]
                Z_{102}^{\nicefrac{-1}{3}}Z_{111}^{\nicefrac{-1}{3}}Z_{120}^{\nicefrac{-1}{3}}Z_{201}^{\nicefrac{-2}{3}}Z_{210}^{\nicefrac{-2}{3}} & 0 & 0            
            \end{pmatrix}
        \end{aligned}       
    \end{equation}
    Notice that, in both cases, no Fock-Goncharov variables from side $23$ appear, in accordance with the crossing of $\triangle_{123}$ associated with $T_1$.
    Cyclicly permute the indices once and twice to get the expressions for $T_2$ and $T_3$.
\end{example}
As anticipated, pinnings allow to \emph{amalgamate} variables of two adjacent triangles, creating the set of parameters describing the moduli
space $\mathcal{P}_{\PGL_n(\real)}(\square)$ of the quadrangle obtained by gluing the pair along the
common side.
The amalgamation procedure orderly identifies the two $(n-1)$-tuples of vertices on the interior of the sides to be glued, assigning to each resulting vertex a Fock-Goncharov amalgamated variable via the product of the parent ones: if the identified vertices $\alpha_1,\alpha_2$ result in the single vertex $\alpha$, $Z_\alpha:=Z_{\alpha_1}Z_{\alpha_2}$ (\Cref{fig:amalg}).
\begin{figure}[!htb]
    \centering
    \includegraphics[scale=1.3]{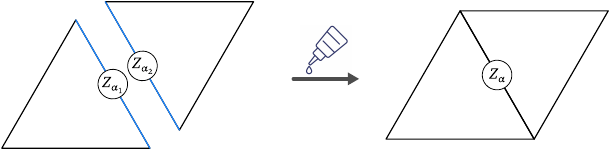}
    \caption{Amalgamation example $Z_\alpha=Z_{\alpha_1}Z_{\alpha_2}$, with the sides to be glued in {\color{2blue}blue}. }\label{fig:amalg}
\end{figure}
This operation allows to parametrize $\mathcal{P}_{\PGL_n(\real)}(\mathbb{S})$, for any (suitable) triangulated surface $\mathbb{S}$, by amalgamation of the moduli spaces of pinnings assigned to the individual triangles.

\subsection{Fat graph loops via transport matrices}\label{sec:looprep}

Using the machinery of \Cref{suse;cmbT},  we here explain how to assign sequences of transport matrices to loops running over a fat graph. 
In order to explain this transport matrix factorization of fat graph paths, we assume a clockwise labelling of vertices from the set $\{1,2,3\}$ is chosen for each triangle coming from the dual fat graph $\Gamma^{\vee}_{g,s,m}$ of the surface $\Sigma_{g,s,m}$, where $\Gamma_{g,s,m}$ is constructed following the recipe in \cite{Chekhov2017}.

One should picture a path as transporting an oriented side along the triangulation by the action of transport matrices.
In order to consistently compose two transport matrices---that we defined as maps between the \emph{clockwise} oriented sides of a 
triangle---a reversal of the transported side must be performed.
This is done by inserting an $S$ block between the matrices.
\begin{figure}[!htb]
    \centering
    \includegraphics[scale=1.35]{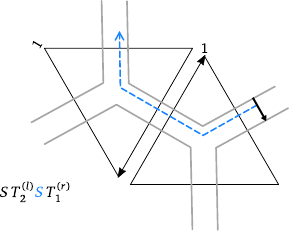}
    \caption{The {\color{2blue}blue} path transports the oriented side indicated by the thick black arrow. When constructing its representation, the transport composition rule prescribes the insertion of a {\color{2blue}S} block between the transport matrices. The orientation is explicitly indicated on those sides interacting via the composition: the $31$ oriented side of the right $(r)$ triangle is reversed by $S$ to match the $23$ one of the left $(l)$ triangle. The leftmost $S$ block performs a final reversal: without it, the path would flip the side it is just meant to transport.}\label{fig:gluing}
\end{figure}
In terms of Fock-Goncharov variables, this transport composition rule  enforces the amalgamation, performed in the language of transport matrices by letting the diagonal pinning factors multiply each other.
Notice that the side reversal is exactly the operation needed to absorb the leftmost $S$ factor of a transport matrix and let the $H$ blocks generate the amalgamated variable.
\begin{example}
    For $n=2$, the composition in \Cref{fig:gluing} is the amalgamation $Z':=Z^{(l)}_{011}Z^{(r)}_{101}$:
    \begin{equation*}
    \begin{aligned}
        T^{(l)}_2 \ S \ T^{(r)}_1 &= SH_1(Z^{(l)}_{110})L_{1}H_1(Z^{(l)}_{011}) \ S \ SH_1(Z^{(r)}_{101})L_{1}H_1(Z^{(r)}_{110})\\
        &=-S\ H_1(Z^{(l)}_{110})L_{1}\underbrace{H_1(Z^{(l)}_{011})H_1(Z^{(r)}_{101})}_{H_1(Z^{(l)}_{011}Z^{(r)}_{101})}L_{1}H_1(Z^{(r)}_{110})\\
        &=-S\ H_1(Z^{(l)}_{110})L_{1}H_1(Z')L_{1}H_1(Z^{(r)}_{110}),
    \end{aligned}
    \end{equation*}
    i.e. $H_1(Z_{\alpha_2})H_1(Z_{\alpha_1})=H_1(Z_{\alpha_1} Z_{\alpha_2})=H_1(Z_\alpha)$, for $Z_\alpha$ the amalgamated variable.   
    For $n=3$, the mechanism reads
    \begin{align*}    H_1(Z_{\beta_2})H_2(Z_{\alpha_2})H_2(Z_{\alpha_1})H_1(Z_{\beta_1})&=H_1(Z_{\beta_2}Z_{\beta_1})H_2(Z_{\alpha_2}Z_{\alpha_1})\\
&=H_2(Z_{\alpha_2}Z_{\alpha_1})H_1(Z_{\beta_2}Z_{\beta_1})=H_2(Z_\alpha)H_1(Z_\beta),
    \end{align*}
    with $Z_\alpha,Z_\beta$ as the two amalgamated variables.
\end{example}
\begin{remark}\label{rmk:conjugation}
    When dealing with loops, the amalgamation at the base-point cannot be captured by the mere factorization over transport matrices: the unavoidable choice of a starting point prevents the composition between the first and last transport matrices from happening.
    
    Nevertheless, this issue is easily fixed by a \emph{global} conjugation. E.g., the path in \Cref{fig:gluing} can be closed into a loop conjugating its factorization $ST_2^{(l)}ST_1^{(r)}$ by $C=H_1(Z^{(r)}_{110})$: denoting by $Z'':=Z^{(r)}_{110}Z^{(l)}_{110}$ the new amalgamated variable due to the closure,
    \[ CST^{(l)}_2ST^{(r)}_1C^{-1} =H_1(Z^{(r)}_{110})H_1(Z^{(l)}_{110})L_{1}H_1(Z')L_{1}H_1(Z^{(r)}_{110})H^{-1}_1(Z^{(r)}_{110})=H_1(Z'')L_{1}H_1(Z')L_{1}. \]
Notice that in this last factorization none of the variables forming $Z''$ remains.
\end{remark}
Summing up, once the sequence of transport matrices associated to the directed crossings of triangles is read off, each loop's matrix is assembled by transport compositions and finalized by a global conjugation.

\subsection{Higher bordered cusped Teichm\"uller space}\label{suse:bordered}

In this section, following the work by the second author and her collaborators \cite{Chekhov2017,Chekhov2018}, we recall the definition of the higher bordered cusped Teichm\"uller space and show that it can be identified with the positive connected component $\mathcal{P}^+_{\PGL_n(\real)}(\Sigma_{g,s,m})$ of the moduli space of pinnings.

Given a Riemann surface $\Sigma_{g,s,m}$ of genus $g$ with $s>0$ boundaries and $m\ge 0$ bordered cusps on the boundary, denote by $\widehat\Sigma_{g,s,m}$ the surface resulting by removing tubular domains around each boundary of $\Sigma_{g,s,m}$ and adding an ideal triangle at each bordered cusps. The Riemann surface $\Sigma_{g,s,m}$ is called  \emph{hyperbolic} if the hyperbolic area of $\widehat\Sigma_{g,s,m}$,
\begin{equation}
    \hbox{Area\ of\ }\widehat\Sigma_{g,s,m}=\bigl(4g-4+2s+m\bigr)\pi,
\end{equation}
is strictly positive.
For a hyperbolic Riemann surface $\Sigma_{g,s,m}$, denote by   $p_1,\dots,p_m$ the bordered cusps on the boundary.
The fundamental groupoid of arcs $\pi_{\mathfrak a}(\Sigma_{g,s,m})$ is the set of all directed paths $\gamma_{ij}:[0,1]\to  \Sigma_{g,s,m}$ such that $\gamma_{ij}(0)=p_i$ and $\gamma_{ij}(1)=p_j$ modulo homotopy. The groupoid structure is dictated by the usual path--composition rules.  
The higher bordered cusped Teichm\"uller space is defined as
\begin{equation}
    \widehat{\mathcal T}_{\PSL_n(\real)}(\Sigma_{g,s,m})=\hbox{Hom}'\left(\pi_{\mathfrak a}(\Sigma_{g,s,m}), \PSL_n(\real) \right){\Big\slash\Pi_{i=1}^m B_i},
\end{equation}
where $\hbox{Hom}'$ is a connected component of the representation space consisting entirely of discrete faithful representations and $B_1,\dots,B_m$ is the choice of an unipotent Borel element in $ \PSL_n(\real)$ for every bordered cusp.
The representation space $\hbox{Hom}'\big(\pi_{\mathfrak a}(\Sigma_{g,s,m}), \PSL_n(\real) \big)$ is endowed with the Fock-Rosly pre-Poisson bracket, which is compatible with the cluster variety Poisson structure \cite{Chekhov2022}.
\begin{theorem}\label{th:HBCT}
The higher bordered cusped Teichm\"uller space is isomorphic to the positive component of the
moduli space of pinnings:
$$
\widehat{\mathcal T}_{\PSL_n(\real)}(\Sigma_{g,s,m})\simeq \mathcal P^+_{\PGL_n(\real)}(\Sigma_{g,s,m}).
$$
The Fock-Goncharov coordinates define the coordinate ring on $\widehat{\mathcal T}_{\PSL_n(\real)}(\Sigma_{g,s,m})$.
\end{theorem}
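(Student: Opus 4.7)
The strategy is to construct an explicit isomorphism by matching data on both sides, with the snake calculus of \Cref{suse;cmbT} providing the dictionary, and then to verify that the positivity condition on Fock--Goncharov coordinates coincides with the discreteness and faithfulness hypothesis on the $\PSL_n(\real)$-representation.

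\textbf{Step 1: From a representation to a pinning configuration.} Given $[\rho]\in\widehat{\mathcal T}_{\PSL_n(\real)}(\Sigma_{g,s,m})$, fix an ideal triangulation of $\widehat\Sigma_{g,s,m}$ compatible with the bordered cusps, together with a lift $\rho:\pi_{\mathfrak a}(\Sigma_{g,s,m})\to\PSL_n(\real)$. At each bordered cusp $p_i$, the chosen Borel unipotent $B_i\subset\PSL_n(\real)$ determines both a complete flag (fixed by $B_i$) and a compatible line (the highest weight line in the standard representation), yielding the framing and the pinning datum on each of the two edges of the triangulation emanating from $p_i$. Using parallel transport along the arcs of the triangulation, the discreteness and faithfulness of $\rho$ guarantee that adjacent flags and lines are in generic position, so the data patch together to a point of $\mathcal P_{\PGL_n(\real)}(\Sigma_{g,s,m})$. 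The quotient by $\Pi_{i=1}^m B_i$ on the left precisely kills the remaining ambiguity in the choice of pinning on each side, so the map descends to the quotient.

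\textbf{Step 2: From pinnings to a representation via transport matrices.} Conversely, given a pinning configuration, the construction of \Cref{suse:triangle,suse:amlgam} produces a collection of transport matrices $T_i\in\PSL_n(\real)$ for each oriented edge of every triangle, expressed entirely in terms of the Fock--Goncharov coordinates $Z_\alpha$. The amalgamation rule $Z_\alpha:=Z_{\alpha_1}Z_{\alpha_2}$ ensures compatibility along internal edges of the triangulation, so the composition rule described in \Cref{sec:looprep}, together with the conjugation prescription of \Cref{rmk:conjugation}, produces a well-defined element of $\PSL_n(\real)$ for any loop based at a cusp, and more generally a representation $\rho_Z$ of $\pi_{\mathfrak a}(\Sigma_{g,s,m})$. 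Changes of ideal triangulation are absorbed by cluster mutations, and changes of framing at cusps by the $\Pi_{i=1}^m B_i$-action, so the assignment $Z\mapsto[\rho_Z]$ is well-defined on $\mathcal P^+_{\PGL_n(\real)}(\Sigma_{g,s,m})$.

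\textbf{Step 3: Positivity matches discreteness and faithfulness.} The core analytic step is to show that the positivity $Z_\alpha\in\real_{>0}$ precisely selects the connected component $\hbox{Hom}'$ of discrete faithful representations. In one direction, the building blocks $L_k,\ H_k(t),\ S$ of \Cref{definition:blocks} take values in Lusztig's totally positive semigroup of $\PSL_n(\real)$ when $t>0$, and total positivity is preserved by products and by amalgamation; a representation taking values in the totally positive semigroup lies in $\hbox{Hom}'$ by the Fock--Goncharov positivity theorem applied to framed local systems, adapted to bordered cusps as in \cite{Chekhov2017,Chekhov2018}. In the other direction, a discrete faithful representation equipped with the prescribed flags/pinnings yields flags in generic position whose cross-ratio-type invariants are exactly the $Z_\alpha$, and the classical hyperbolic case $n=2$ forces all these numbers to be positive; the higher rank case follows by continuity along a path joining $\rho$ to the Fuchsian locus inside the component.

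\textbf{Step 4: The coordinate ring statement.} Once the bijection is established, the statement that the Fock--Goncharov coordinates define the coordinate ring follows from the fact that the transport matrices $T_i$ in \eqref{T-formulae} are Laurent monomials in the $Z_\alpha^{\nicefrac{1}{n}}$, so all matrix coefficients of monodromies along arcs lie in $\real[Z_\alpha^{\pm\nicefrac{1}{n}}]$, and the amalgamation procedure produces a cluster $\pazocal X$-variety whose function ring is generated by these coordinates.

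The main obstacle is \textbf{Step 3}: matching the topological/analytic component $\hbox{Hom}'$ to the semi-algebraic positive locus. This requires invoking Lusztig's theory of total positivity and its interpretation in the Fock--Goncharov framework, together with the bordered cusped extension of \cite{Chekhov2017}, to ensure that no spurious non-faithful or non-discrete representations sneak in at the boundary of the positive chart, and that conversely every element of $\hbox{Hom}'$ admits a compatible pinning lift producing positive coordinates.
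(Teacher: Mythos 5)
Your proposal takes a genuinely different route from the paper's proof. The paper constructs only one direction of the map — assigning transport matrices (built from the Fock--Goncharov coordinates) to arcs of the triangulation, as in \Cref{sec:looprep} — and then reduces the identification to a \emph{dimension count}: it computes $\dim\mathcal P^+_{\PGL_n(\real)}(\Sigma_{g,s,m})$ by counting Fock--Goncharov variables per triangle and subtracting the amalgamated sides, computes $\dim\widehat{\mathcal T}_{\PSL_n(\real)}(\Sigma_{g,s,m})$ from the number of arc-generators of $\pi_{\mathfrak a}$ and the unipotent Borel quotient, and observes that both equal $(4g-4+2s+m)(n+4)(n-1)/2-(6g-6+3s+m)(n-1)$. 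Your Steps~1--2, by contrast, build maps in both directions (representation $\to$ pinnings and pinnings $\to$ representation), and your Step~3 confronts head-on the hardest analytic question — that the positive locus matches the component $\hbox{Hom}'$ of discrete faithful representations — via Lusztig total positivity and a continuity argument toward the Fuchsian locus. What the paper's approach buys is brevity and self-containedness: once the transport-matrix parametrization is in place, the dimension count finishes the argument without appealing to positivity theory, although one might object that equal dimensions alone does not certify an isomorphism, so it implicitly relies on the reader accepting the positivity framework from \cite{Chekhov2022,Goncharov2022}. What your approach would buy is a more honest, bijectivity-level proof; the cost is that Step~3 is delicate. In particular, your continuity argument along a path to the Fuchsian locus is only a sketch, and your Step~1 assumption that discreteness and faithfulness force the flags at adjacent cusps into generic position is not automatic in rank $n\ge 3$ — that genericity is essentially equivalent to positivity, so there is a hidden circularity to untangle. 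You correctly flag Step~3 as the obstacle; for a complete argument you would need to make precise the appeal to the Fock--Goncharov positivity theorem for \emph{framed} local systems and its extension to the pinned/bordered-cusp setting, rather than invoking it as a black box.
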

 \begin{proof}
     For a hyperbolic Riemann surface $\Sigma_{g,s,m}$, $\widehat\Sigma_{g,s,m}$ can be triangulated by $4g-4+2s+n$ ideal triangles. Each  arc $\gamma_{ij}\in\pi_{\mathfrak a}$ uniquely intersects two sides of any triangle it crosses\footnote{In $\widehat{\mathcal T}$, the paths go between bordered cusps---each corresponding to the addition of an extra ideal triangle \cite{Chekhov2017}.
     These additional triangles are not part of or detected by the ideal triangulation: their information remains codified in the form of pinnings.
     Therefore, all arcs do start and finish \emph{outside} the triangles of the ideal triangulation.}.  
As explained in \Cref{suse;cmbT}, a transport matrix $T$ is associated to any such oriented crossing, with $T^{-1}$ corresponding to the opposite direction.
Transport matrices (\Cref{fig:transport}) fulfill the role of building blocks for associating a matrix  $M_{ij}\in \PSL_n(\mathbb R)$ to each arc $\gamma_{ij}\in\pi_{\mathfrak a}(\Sigma_{g,s,m})$: as in \Cref{sec:looprep}, they are suitably composed resulting in a given matrix $M_{ij}$ for any path $\gamma_{ij}$.
The entries of each $M_{ij}$ are expressed in Fock-Goncharov variables.
Therefore, the Fock-Goncharov variables provide the coordinate ring of $\widehat{\mathcal{T}}_{\PSL_n(\real)}(\Sigma_{g,s,m})$ if
 $$
 \dim\left(\mathcal{P}^+_{\PGL_n(\real)}(\Sigma_{g,s,m})\right)=\dim\left(\widehat{\mathcal{T}}_{\PSL_n(\real)}(\Sigma_{g,s,m})\right).
 $$
 
 Let us compute the left-hand side first. As explained above,   $\widehat\Sigma_{g,s,m}$ can be triangulated by $F=4g-4+2s+m$ ideal triangles. Each of these triangles in endowed with $(n-1)(n-2)/2$ inner variables and $n-1$ pinnings for each side, resulting in a total of  $(n+4)(n-1)/2$ Fock-Goncharov  variables. However, most of the sides of these triangles are amalgamated: indeed, the only non-amalgamated sides are the ones dual to an open edge in the cusped fat graph corresponding to a bordered cusp. Therefore, we have $3F-m=2(6g-6+3 s+m)$ amalgamated sides that absorb $(6g-6+3 s+m)(n-1)$ variables, leading to
\begin{equation}
    \dim\left(\mathcal{P}^+_{\PGL_n(\real)}(\Sigma_{g,s,m})\right)=(4g-4+2s+m)(n+4)(n-1)/2-(6g-6+3 s+m)(n-1).
\end{equation}

We now compute the dimension of $\widehat{\mathcal{T}}_{\PSL_n(\real)}(\Sigma_{g,s,m})$. 
The fundamental groupoid of arcs is generated by $2g+s-2+m$ arcs. For each of these, we associate a matrix in $\PSL_n(\real)$ and, taking into account the quotient by the unipotent Borel subgroups, we obtain 
\begin{equation}
    \dim\left(\widehat{\mathcal{T}}_{\PSL_n(\real)}(\Sigma_{g,s,m})\right)=(2g+s-1+m-1)(n^2-1)-m n(n-1)/2.
\end{equation}
The two numbers coincide.
\end{proof}

\subsection{Quantization}\label{sec:quantization}

The moduli space of pinnings $\mathcal{P}_{\PGL_n(\real)}(\triangle_{123})$ is quantized by promoting the Fock-Goncharov variables to invertible coordinates of a quantum torus, with relations encoded by a quiver constructed from the tessellation of $\triangle_{123}$.
\begin{remark}
When quantized, fractional powers of positive Fock–Goncharov variables $Z_\alpha\in\real_{>0}$ can be represented, on a dense Hilbert subspace of $L_2(\real^d)$, as unbounded self-adjoint operators with 
\emph{positive} spectrum \cite{Goncharov2022}.
\end{remark}
Provided the removal of $1,2,3$, the quiver's vertices coincide with the tessellation's ones and arrows are defined by consistently extending the clockwise orientation of $\triangle_{123}$ to the tiles: upward ones are clockwise and downward ones counterclockwise.
Arrows from the sides of $\triangle_{123}$ are dashed.
The resulting quiver is displayed in \Cref{fig:FGquiver}.
\begin{figure}[!htb]
    \centering
    \includegraphics[scale=1.2]{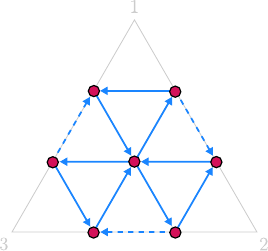}
    \caption{$n=3$ quiver for $\triangle_{123}$.}
    \label{fig:FGquiver}
\end{figure}
The set of vertices of the quiver is in bijection with quantum Fock-Goncharov variables $Z_\alpha$ and arrows rule their commutation relations: for a central invertible variable $q$,
\begin{equation}\label{qcommut}    \begin{matrix}\includegraphics[width=0.465\textwidth]{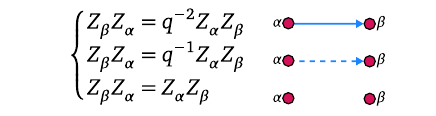}\end{matrix}
\end{equation}
Denoting by $\pazocal{Q}$ the encoding quiver, the resulting noncommutative algebra is a so-called quantum (cluster) $\pazocal{X}$-torus 
\begin{equation}\label{countarrow}
    \pazocal{X}_{\pazocal{Q}}:=\complex[q^{\nicefrac{\pm1}{2}}]\big\langle\{Z_\alpha^{\pm1}\}\big\rangle\big/(Z_\beta Z_\alpha-q^{-2\#}Z_\alpha Z_\beta),
\end{equation}
for $\#$ counting the number of arrows from $\alpha$ to $\beta$ ($\nicefrac{1}{2}$ for a dashed one), which defines via mutations a corresponding quantum cluster $\pazocal{X}$-variety \cite{Fock2009}.
Such an algebra naturally carries the \emph{Weyl quantum ordering}: for any monomial $Z_{\alpha_1}\cdots Z_{\alpha_n}$, we denote it by double bullets
\begin{equation}\label{Weyl}
    \ord{Z_{\alpha_1}\cdots Z_{\alpha_n}}:=q^W Z_{\alpha_1}\cdots Z_{\alpha_n},\quad W:=\sum^n_{\substack{k=2\\j<k}}w_{jk}\text{\ \ for\ \ }Z_{\alpha_j}Z_{\alpha_i}=q^{2w_{ij}}Z_{\alpha_i} Z_{\alpha_j}.
\end{equation}
A handy way to master rule \eqref{Weyl} is to imagine the weight $w_{ij}=-w_{ji}$ measuring a flow carried by the arrows: $-1$ for an outgoing arrow $\alpha_i\rightarrow\alpha_j$ (outflow) and $+1$ for an incoming arrow $\alpha_i\leftarrow\alpha_j$ (inflow), with dashed arrows corresponding to half flows.
Notice that \emph{inside} the double bullets the order does not matter, e.g., $\ord{Z_\alpha Z_\beta}=\ord{Z_\beta Z_\alpha}$.
This is the very reason this definition provides a well-defined ordering for quantum variables.

Due to the normalized $H_k$ block, we need an extension of the $\pazocal{X}$-torus containing $n$-th roots of Fock-Goncharov variables:
\begin{equation}
\pazocal{X}_{\pazocal{Q}}^{\nicefrac{1}{n}}:=\complex[q^{\pm\frac{1}{2n^2}}]\big\langle\{Z_\alpha^{\nicefrac{\pm1}{n}}\}\big\rangle\Big/\big( Z_\beta^{\nicefrac{1}{n}}Z_\alpha^{\nicefrac{1}{n}}-q^{-\frac{2\#}{n^2}}Z_\alpha^{\nicefrac{1}{n}}Z_\beta^{\nicefrac{1}{n}}\big).
\end{equation}
The $n^2$ denominator is found by factorizing each $Z_\alpha$ as $\prod_{i=1}^n Z_\alpha^{\nicefrac{1}{n}}$, while the quantum ordering formula remains valid.

Transport matrices are quantized following a straightforward recipe:
\begin{definition}
For the diagonal matrix $(Q)_{ii}:=q^{2-i-\frac{n+1}{n^2}}$, the triplet of \emph{quantum transport matrices} is given by
\begin{equation}
     T^q_i=Q\ord{T_i}, \quad i=1,2,3,
\end{equation}
where the Weyl quantum ordering acts linearly on each entry.
\end{definition}
The matrix $Q$ is uniquely defined by enforcing the quantum groupoid relation \cite{Chekhov2022} 
\begin{equation}
T^q_1T^q_2T^q_3=\One.
\end{equation}
Within the framework of \Cref{sec:looprep}, this translates to the topological consistency visualized by \Cref{fig:qgroupoid}. 
In the classical case, we have already observed that $T_1T_2T_3=\One$ follows directly.
\begin{figure}[!htb]
    \centering
    \includegraphics[scale=1.5]{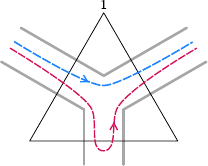}
    \caption{Interpretation of the quantum groupoid relation for paths: being topologically equivalent, {\color{2blue}blue} and {\color{2red}red} must be assigned the same matrix, i.e. ${\color{2blue}(T^{q}_1)^{-1}}={\color{2red}T^q_2T^q_3}$.}\label{fig:qgroupoid}
\end{figure}
Notice that the quantum correction introduced with the matrix $Q$ causes the entries of the quantum transport matrices \emph{not} to be Weyl-ordered monomials.
Moreover, interpreting this correction as the quantization
\begin{equation}
S \mapsto S^q:=QS,
\end{equation}
we quantize the transport composition rule (\Cref{fig:gluing}) by prescribing the insertion of a $S^q$ block instead.

Finally, the quantum extension of the amalgamation procedure is straightforward: under the simplest rule prescribing commutation for quantum Fock-Goncharov variables coming from different triangles, a quantum amalgamated variable reads $Z_\alpha:=\ord{Z_{\alpha_2}Z_{\alpha_1}}=Z_{\alpha_2}Z_{\alpha_1}=Z_{\alpha_1}Z_{\alpha_2}$.
For the special self-gluing case where two sides of the \emph{same} triangle are identified, only the first equality holds and the amalgamated variable must be taken as the quantum ordering.

\section{GDAHAs from higher Teichm\"uller theory}\label{sec:GDAHAreps}

In this section, we consider two specific fat graphs, and construct the matrix algebras resulting from the transport representation. We show that these algebras provide embeddings of universal GDAHAs: the $n=2$ case recovers the known representation of $\mathsf{H}_{D_4}$ \cite{Mazzocco2018}, serving as both a quantum showcase of the machinery developed in \Cref{sec:looprep} and an appetizer for the more involved $n=3$ one corresponding to $\mathsf{H}_{E_6}$.
\begin{remark}
    Since we want to deal with whole matrices and not only their traces, namely quantum objects in the representation space instead of the character variety, we select a base point in each fat graph. This corresponds to selecting an edge in the triangulation that is transported along the paths. 
\end{remark}
Our proofs are supported by the \texttt{NCAlgebra} extension for Mathematica \cite{NCA}.
This package allows to perform noncommutative multiplications and simplify symbolic expressions by repeated substitution of prescribed relations.
All Mathematica-aided computations are available in \cite{DalMartello2023}.

In the following two sections, the notation drops the $q$ superscript for better readability, namely $T_i$ and $S$ stay for the respective quantum matrices.

\subsection{The matrix algebra for \texorpdfstring{$\mathsf{H}_{D_4}$}{}}\label{Rep2}

For $n=2$, the expected recovery of classical Teichm\"uller theory manifests by choosing the fat graph of the four-holed Riemann sphere $\Sigma_{0,4,0}$, see \Cref{app:analysis} for the analytic rationale underlying this choice.
Indeed, the matrix algebra resulting from the transport matrix factorization recovers the  same representation of the GDAHA $H_{D_4}(t,q)$ that was found in the classical Teichm\"uller framework \cite{Mazzocco2018}.
\begin{figure}[!htb]
    \centering
    \includegraphics[height=9cm]{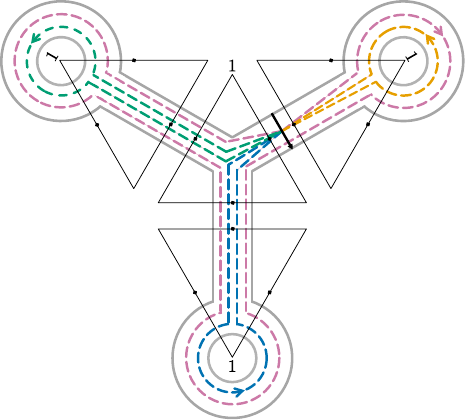}
    \caption{Fat graph $\Gamma_{0,4,0}$, its dual and relevant loops for $n=2$, with the transported edge displayed by the thick black arrow. The four triangles are labelled as follows: $(c)$ for the central one, $(r)$ for the rightmost one, $(l)$ for the leftmost one and $(d)$ for the downmost one. For each triangle, the $1$ indicates the choice of labelling and thus dictates its triple of transport matrices as in \Cref{fig:transport}.}\label{fig:2dimloops}
\end{figure}

The transport matrix \eqref{2dimT1}, computed in \Cref{ex-tr}, needs to be quantized and multiplied by the quantum correction $Q=\diag(q^{\nicefrac{1}{4}},q^{\nicefrac{-3}{4}})$:
\begin{equation*}
    T_1=Q\ \begin{pmatrix}
            -\ord{Z_{101}^{\nicefrac{1}{2}}Z_{110}^{\nicefrac{-1}{2}}} & -\ord{Z_{101}^{\nicefrac{1}{2}}Z_{110}^{\nicefrac{1}{2}}}\\[.5em]
            \ord{Z_{101}^{\nicefrac{-1}{2}}Z_{110}^{\nicefrac{-1}{2}}} & 0
        \end{pmatrix}=\begin{pmatrix}
        -Z_{101}^{\nicefrac{1}{2}}Z_{110}^{\nicefrac{-1}{2}} & -q^{\frac{1}{2}}Z_{101}^{\nicefrac{1}{2}}Z_{110}^{\nicefrac{1}{2}}\\[.5em]
        q^{-\frac{1}{2}}Z_{101}^{\nicefrac{-1}{2}}Z_{110}^{\nicefrac{-1}{2}} & 0
    \end{pmatrix}.
\end{equation*}
Quantum $T_2$ and $T_3$ follow the same recipe.

The transport matrix factorization can be read off from \Cref{fig:2dimloops}: denoting by {\color{3ochre}$O$} the matrix corresponding to the ochre loop, {\color{3blue}$B$} the matrix of the blue loop, {\color{3green}$G$} the one of the green loop and ${\color{3pink}P}$  that of the pink one, we have
\begin{equation}\label{2loopfactiorization}
\begin{aligned}
    {\color{3ochre}O}&=-q\ S \ T_3^{(r)} \ S \ T_2^{(r)} \ S,\\
    {\color{3blue}B}&=-q \ T_2^{(c)} \ S \ T_3^{(d)}\ S \ T_2^{(d)} \ S \ T_2^{(c)-1},\\
    {\color{3green}G}&=-q \ T_1^{(c)-1} \ S \ T_3^{(l)} \ S \ T_2^{(l)} \ S \ T_1^{(c)},\\
    {\color{3pink}P}&=\sqrt{q} \ T_1^{(c)-1} \ S \ T_2^{(l)-1} \ S \ T_3^{(l)-1} \ S  \ T_3^{(c)-1} \ S \ T_2^{(d)-1} \ S  \ T_3^{(d)-1} \ S  \ T_2^{(c)-1} \ S  \  T_2^{(r)-1} \ S \ T_3^{(r)-1} \ S,
\end{aligned}
\end{equation}
where $T_i^{(a)}$ stays for the quantum transport matrix $T_i$ in the Fock-Goncharov variables $Z_{\alpha}^{(a)}$ of the triangle $(a)$.
The $q$ and $\sqrt{q}$ factors, whose sign in neutral within $\PSL_2(\real)$, have been introduced in \eqref{2loopfactiorization} to set the product of each pair of Hecke parameters to the unit.
The base-point amalgamation is achieved conjugating formulae (\ref{2loopfactiorization}) by the diagonal matrix
\begin{equation}\label{eq:mat-C}
   C= \ H_1(\sqrt{q}Z^{(c)}_{110})=\begin{pmatrix}
        q^{\nicefrac{-1}{4}}Z^{(c)-1}_{110} & 0\\[.5em]
        0 & q^{\nicefrac{1}{4}}Z^{(c)}_{110}
    \end{pmatrix}. 
\end{equation}
For a matrix $M$ representing a path in a fat graph, we denote by $\overline{M}:=CMC^{-1}$ the one conjugated by the $C$ in \eqref{eq:mat-C}.

The final matrices $\overline O,\, \overline B, \overline G, \overline P$ depend only on the amalgamated variables
\begin{equation}
\begin{aligned}
    Z_{O1}&=qZ^{(r)}_{101}Z^{(r)}_{110},\qquad
    Z_{O2}=Z^{(c)}_{110}Z^{(r)}_{011},\\
    Z_{B1}&=qZ^{(d)}_{101}Z^{(d)}_{110},\qquad
    Z_{B2}=Z^{(c)}_{011}Z^{(d)}_{011},\\
    Z_{G1}&=qZ^{(l)}_{101}Z^{(l)}_{110},\qquad
    Z_{G2}=Z^{(c)}_{101}Z^{(l)}_{011},
\end{aligned}
\end{equation}
whose algebra relations are encoded by the triangular-shaped quiver in \Cref{fig:2amalquiver}.
\begin{figure}[!htb]
    \centering
    \includegraphics[height=5.5cm]{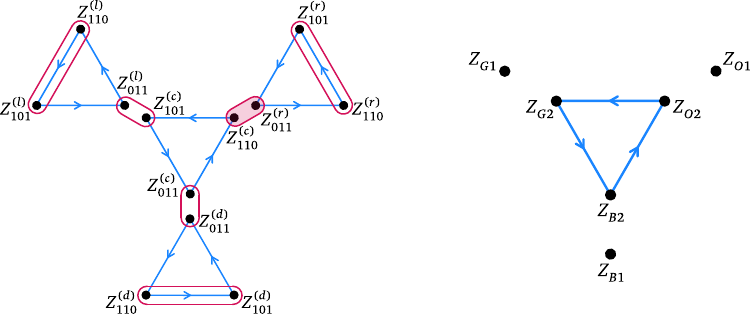}
    \caption{On the left, amalgamated pairs are highlighted in {\color{2red}red}, the shaded one triggered by the global conjugation. No variables from the original four triangles remain. On the right, the resulting quiver of amalgamated variables. The variables $Z_{O1},Z_{B1},Z_{G1}$ together with  $Z_{O2}Z_{B2}Z_{G2}$ generate the subalgebra of Casimir elements.}\label{fig:2amalquiver}
\end{figure}
\begin{remark}\label{rmk:2amalvars}
On the one hand, being isolated vertices in the amalgamated quiver, the variables $Z_{O1},Z_{B1},Z_{G1}$ are central elements. 
On the other hand, despite the transport matrices involve square roots of Fock-Goncharov variables, no fractional $Z_{O2},Z_{B2}$ or $Z_{G2}$ appear in the whole quadruple $(\overline O,\, \overline B, \overline G,\,\overline P)$.    
\end{remark}
As anticipated, the next theorem recovers the $\Mat_2(\mathbb{T}^2_q)$-embedding of the $\tilde{D}_4$-type GDAHA found by the second author in \cite{Mazzocco2018}.
\begin{theorem}\label{thm:TrueD4-emb}
Let $\pazocal{X}_{\pazocal{Q}_2}$ be the quantum $\pazocal{X}$-torus with coordinates $Z_{O1}$, $Z_{O2}$, $Z_{B1}$, $Z_{B2}$,$ Z_{G1}$, $Z_{G2}$ and $q$-commutations encoded by the quiver in \Cref{fig:2amalquiver}.
The $\SL_2(\pazocal{X}_{\pazocal{Q}_2}^{\nicefrac{1}{2}})$ matrices
\begin{equation}\label{OBGP}
\begin{aligned}
    \overline{O}&=\begin{pmatrix}
        0 & Z_{O1}^{\nicefrac{-1}{2}}Z_{O2}^{-1}\\[.5em]
        -Z_{O1}^{\nicefrac{1}{2}}Z_{O2} & Z_{O1}^{\nicefrac{1}{2}}+Z_{O1}^{\nicefrac{-1}{2}}
    \end{pmatrix},\\
    \overline{B}&=\begin{pmatrix}
        Z_{B1}^{\nicefrac{1}{2}}+Z_{B1}^{\nicefrac{-1}{2}}+Z_{B1}^{\nicefrac{-1}{2}}Z_{B2}^{-1} & Z_{B1}^{\nicefrac{1}{2}}+Z_{B1}^{\nicefrac{-1}{2}}+Z_{B1}^{\nicefrac{-1}{2}}Z_{B2}^{-1}+Z_{B1}^{\nicefrac{1}{2}}Z_{B2}\\[.5em]
        -Z_{B1}^{\nicefrac{-1}{2}}Z_{B2}^{-1} & -Z_{B1}^{\nicefrac{-1}{2}}Z_{B2}^{-1}
    \end{pmatrix},\\
    \overline{G}&=\begin{pmatrix}
        Z_{G1}^{\nicefrac{1}{2}}+Z_{G1}^{\nicefrac{-1}{2}}+Z_{
        G1}^{\nicefrac{1}{2}}Z_{G2} & Z_{
        G1}^{\nicefrac{1}{2}}Z_{G2}\\[.5em]
        -Z_{G1}^{\nicefrac{1}{2}}-Z_{G1}^{\nicefrac{-1}{2}}-Z_{
        G1}^{\nicefrac{-1}{2}}Z_{G2}^{-1}-Z_{
        G1}^{\nicefrac{1}{2}}Z_{G2} & -Z_{
        G1}^{\nicefrac{1}{2}}Z_{G2}
    \end{pmatrix},\\
    \overline{P}&=\begin{pmatrix}
        qZ_{O1}^{\nicefrac{1}{2}}Z_{B1}^{\nicefrac{1}{2}}Z_{G1}^{\nicefrac{1}{2}}Z_{O2}Z_{B2}Z_{G2} & 0\\
        -qz & qZ_{O1}^{\nicefrac{-1}{2}}Z_{B1}^{\nicefrac{-1}{2}}Z_{G1}^{\nicefrac{-1}{2}}Z_{O2}^{-1}Z_{B2}^{-1}Z_{G2}^{-1}
    \end{pmatrix},
\end{aligned}
\end{equation}
with
\begin{equation}\begin{aligned}
    z=&(Z_{O1}^{\nicefrac{1}{2}}-Z_{O1}^{\nicefrac{-1}{2}})Z_{B1}^{\nicefrac{-1}{2}}Z_{G1}^{\nicefrac{-1}{2}}Z_{B2}^{-1}Z_{G2}^{-1}+(Z_{B1}^{\nicefrac{1}{2}}-Z_{B1}^{\nicefrac{-1}{2}})Z_{G1}^{\nicefrac{-1}{2}}Z_{O1}^{\nicefrac{-1}{2}}Z_{O2}Z_{G2}^{-1}\\ &+(Z_{G1}^{\nicefrac{1}{2}}-Z_{G1}^{\nicefrac{-1}{2}})Z_{O1}^{\nicefrac{-1}{2}}Z_{B1}^{\nicefrac{-1}{2}}Z_{O2}Z_{B2}+Z_{O1}^{\nicefrac{1}{2}}Z_{B1}^{\nicefrac{1}{2}}Z_{G1}^{\nicefrac{1}{2}}Z_{O2}Z_{B2}Z_{G2}\\&+Z_{O1}^{\nicefrac{1}{2}}Z_{B1}^{\nicefrac{1}{2}}Z_{G1}^{\nicefrac{-1}{2}}Z_{O2}Z_{B2}Z_{G2}^{-1}+Z_{O1}^{\nicefrac{1}{2}}Z_{B1}^{\nicefrac{-1}{2}}Z_{G1}^{\nicefrac{-1}{2}}Z_{O2}Z_{B2}^{-1}Z_{G2}^{-1},
\end{aligned}
\end{equation}
satisfy the relations
\begin{equation}\label{H-rel-OBGP}
\begin{aligned}
    \left(\overline{O}-Z^{\nicefrac{1}{2}}_{O1}\One\right)\left(\overline{O}-Z^{\nicefrac{-1}{2}}_{O1}\One\right)&=\Zero,\\
    \left(\overline{B}-Z^{\nicefrac{1}{2}}_{B1}\One\right)\left(\overline{B}-Z^{\nicefrac{-1}{2}}_{B1}\One\right)&=\Zero,\\
    \left(\overline{G}-Z^{\nicefrac{1}{2}}_{G1}\One\right)\left(\overline{G}-Z^{\nicefrac{-1}{2}}_{G1}\One\right)&=\Zero,\\
    \left(\overline{P}-qZ_{O1}^{\nicefrac{1}{2}}Z_{B1}^{\nicefrac{1}{2}}Z_{G1}^{\nicefrac{1}{2}}Z_{O2}Z_{B2}Z_{G2}\One\right)\left(\overline{P}-qZ_{O1}^{\nicefrac{-1}{2}}Z_{B1}^{\nicefrac{-1}{2}}Z_{G1}^{\nicefrac{-1}{2}}Z_{O2}^{-1}Z_{B2}^{-1}Z_{G2}^{-1}\One\right)&=\Zero,\\
        \overline{O}\,\overline{B}\,\overline{G}\,\overline{P}&=q^{-1}\One.
\end{aligned}
\end{equation}
The map $K_1\to \overline{O}$,\, $K_2\to  \overline{B}$,\, $K_3\to  \overline{G},\, K_4\to \overline{P},\,q\to q^2$ defines an embedding of $\mathsf{H}_{D_4}$ into $\Mat_2(\pazocal{X}_{\pazocal{Q}_2}^{\nicefrac{1}{2}})$.
\end{theorem}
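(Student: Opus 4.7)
The proof plan is a direct matrix verification split into four steps, culminating in a standard flat-deformation argument for the embedding. I would first derive the explicit matrix forms of \eqref{OBGP} from the transport-matrix factorisations \eqref{2loopfactiorization}: starting from the quantum $T_1$ of \eqref{2dimT1} together with its cyclic relatives $T_2 = \sigma T_1$ and $T_3 = \sigma^2 T_1$, one quantises with $Q = \diag(q^{\nicefrac{1}{4}}, q^{\nicefrac{-3}{4}})$, inserts $S^q = QS$ between consecutive transport matrices, and conjugates by $C = H_1(\sqrt q\, Z^{(c)}_{110})$. The twelve per-triangle Fock-Goncharov variables then collapse into the six amalgamated ones $Z_{O1},\dots,Z_{G2}$ of \Cref{fig:2amalquiver}, with the variables of the shaded pair absorbed by the base-point conjugation, and repeated application of the Weyl ordering \eqref{Weyl} reduces each product to the stated normal form; this step is best performed in \texttt{NCAlgebra}.

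Next, the three Hecke relations for $\overline O, \overline B, \overline G$ follow from Cayley-Hamilton for $2\times 2$ matrices: any $M \in \Mat_2(\pazocal R)$ satisfies $(M - \lambda_1 \One)(M - \lambda_2 \One) = \Zero$ as soon as its trace equals $\lambda_1+\lambda_2$, the product of the antidiagonal entries equals $1$, and (when needed) the off-diagonal entry commutes with the difference of the two diagonal ones. By \Cref{rmk:2amalvars} the target eigenvalues $Z_{*1}^{\nicefrac{\pm1}{2}}$ are central, so no ordering ambiguity arises; for instance $\mathrm{tr}(\overline O) = Z_{O1}^{\nicefrac{1}{2}} + Z_{O1}^{\nicefrac{-1}{2}}$ and $(-Z_{O1}^{\nicefrac{1}{2}}Z_{O2})(Z_{O1}^{\nicefrac{-1}{2}}Z_{O2}^{-1}) = -1$, yielding the Hecke relation. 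The same observation applies to the lower-triangular $\overline P$ once the $q$-commutations of $Z_{O2}, Z_{B2}, Z_{G2}$ encoded by \Cref{fig:2amalquiver} are correctly tracked in the entry $z$.

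The cyclic relation $\overline O\,\overline B\,\overline G\,\overline P = q^{-1}\One$ is then established by direct noncommutative multiplication. Topologically, the concatenation of the four loops in \Cref{fig:2dimloops} is contractible on $\Sigma_{0,4,0}$, which guarantees the product is a scalar matrix; the precise prefactor $q^{-1}$ is pinned down by collecting the $Q$-corrections carried by each $S^q$ insertion together with the $-q$ and $\sqrt q$ global rescalings of \eqref{2loopfactiorization}, chosen precisely so that each pair of Hecke eigenvalues has product $1$ (respectively $q^2$ for $\overline P$). The algebraic check is best handled in \texttt{NCAlgebra} \cite{DalMartello2023}.

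Finally, to upgrade these identities to an embedding of the universal algebra $\mathsf H_{D_4}$, I would match its four parameters $t_1, t_2, t_3, t_4$ with algebraically independent central elements of $\pazocal X_{\pazocal{Q}_2}^{\nicefrac{1}{2}}$, namely the square roots of $Z_{O1}, Z_{B1}, Z_{G1}$ and the leading eigenvalue of $\overline P$. This identifies the universal parameter torus $\mathbb T \otimes \mathbb C[q^{\pm\nicefrac{1}{2}}]$ with a subalgebra of the centre of $\Mat_2(\pazocal X_{\pazocal{Q}_2}^{\nicefrac{1}{2}})$, whence flatness of the GDAHA family \cite{Etingof2007} reduces injectivity of the full map to the commutative limit $q \to 1$, where the statement recovers the faithful classical embedding of \cite{Mazzocco2018}. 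The only delicate step I foresee is the third one: the noncommutative bookkeeping in the fourfold product is lengthy, but entirely mechanical.
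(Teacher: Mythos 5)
Your overall structure mirrors the paper's: derive the matrices from the transport factorisation, check the relations, and conclude the embedding. However, the paper's proof is far leaner than your account suggests --- it simply observes that for specialised (constant) parameters the result is Theorem~3 of \cite{Mazzocco2018}, delegates the direct verification of \eqref{H-rel-OBGP} over $\pazocal{X}_{\pazocal{Q}_2}^{\nicefrac{1}{2}}$ to the Mathematica companion \cite{DalMartello2023}, and notes centrality of the Hecke parameters together with the reduction \eqref{ztoexp} to $\mathbb{T}^2_q$. Two points where you diverge deserve comment.

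First, your Cayley--Hamilton shortcut for the Hecke relations is a reasonable elaboration, but as stated the sufficient conditions are loose: in a noncommutative ring the relation $(M-\lambda_1\One)(M-\lambda_2\One)=\Zero$ is not governed by ``trace'' and ``antidiagonal product'' alone. The reason your argument works here is structural and should be named: for $\overline O, \overline B, \overline G$ the entries involve only one non-central variable each (plus the central $Z_{*1}$), so the entries pairwise commute and ordinary $2\times2$ Cayley--Hamilton applies; for $\overline P$ the matrix is lower triangular with its (central) Hecke eigenvalues sitting on the diagonal, whence $(\overline P-\lambda_1)(\overline P-\lambda_2)$ is strictly lower triangular with $(2,1)$-entry $m(\lambda_1-\lambda_2)+(\lambda_2-\lambda_1)m=0$ by centrality. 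Spelling this out would make the step airtight, and is in fact why the paper can afford to note only that the eigenvalues are central.

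Second, and more substantively, your injectivity argument has a gap. You characterise \cite{Mazzocco2018} as giving a ``faithful classical embedding'' and propose to reduce faithfulness of the present map to the commutative limit $q\to1$ via flatness of the GDAHA family. But \cite{Mazzocco2018} is already a \emph{quantum} faithful embedding of $H_{D_4}(t,q)$ into $\Mat_2(\mathbb{T}^2_q)$ for specialised numerical parameters --- there is no ``classical embedding'' to reduce to. Moreover, flatness of a family alone does not let one deduce injectivity at a generic fibre from injectivity at a special fibre $q=1$: the kernel of a morphism of flat families need not itself be flat, and vanishing at one point does not force vanishing elsewhere. The paper's route --- rest on the quantum faithfulness of \cite{Mazzocco2018} at specialised parameters (see also the simplicity argument the paper does spell out for the $\tilde E_6$ case in \Cref{thm:TrueE6-emb}), and observe that the universal map restricts to that embedding under \eqref{ztoexp} --- is the sound version of what you appear to intend.
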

\begin{proof}
Specializing the variables in $\complex$, this result was proved in \cite{Mazzocco2018}, Theorem 3. For the formulae to match, we need to replace our $q$ by  $\sqrt{q}$ and perform the following substitutions:
\begin{equation}
    \overline{O}\mapsto M^{\hbar}_1,\quad \overline{B}\mapsto M^{\hbar}_2,\quad \overline{G}\mapsto M^{\hbar}_3,\quad \overline{P}\mapsto M^{\hbar}_\infty,
\end{equation}
as
\begin{equation}\label{ztoexp}
\begin{aligned}
    Z_{O1}&\mapsto e^{-p_1},\qquad
    Z_{O2}\mapsto e^{-s_1},\\
    Z_{B1}&\mapsto e^{-p_2},\qquad
    Z_{B2}\mapsto e^{-s_2},\\
    Z_{G1}&\mapsto e^{-p_3},\qquad
    Z_{G2}\mapsto e^{-s_3}.
\end{aligned}
\end{equation}
A direct computation in $\pazocal{X}_{\pazocal{Q}_2}^{\nicefrac{1}{2}}$ of relations \eqref{H-rel-OBGP}  can be found in the Mathematica companion \cite{DalMartello2023}.
The parameters in the Hecke relations are manifestly central: as previously noticed, the variables $Z_{O1}, Z_{B1}, Z_{G1}$ are isolated vertices while $Z_{O2}Z_{B2}Z_{G2}$ forms an isolated quiver cycle---as it involves just one-in one-out vertices.
Moreover, one easily checks that in $\pazocal{X}_{\pazocal{Q}_2}^{\nicefrac{1}{2}}$ the Hecke parameters of $\overline{P}$ multiply to the unit:
\begin{equation*}
     \big(qZ_{O1}^{\nicefrac{1}{2}}Z_{B1}^{\nicefrac{1}{2}}Z_{G1}^{\nicefrac{1}{2}}Z_{O2}Z_{B2}Z_{G2}\big)\big(qZ_{O1}^{\nicefrac{-1}{2}}Z_{B1}^{\nicefrac{-1}{2}}Z_{G1}^{\nicefrac{-1}{2}}Z_{O2}^{-1}Z_{B2}^{-1}Z_{G2}^{-1}\big)=1.
\end{equation*}
Finally, notice that \eqref{ztoexp} evaluates the central variables $Z_{O1},Z_{B1},Z_{G1}$ to the respective parameter $e^{p_i}$, setting all four matrices free from fractional coordinates (see \Cref{rmk:2amalvars}). By further evaluating the remaining central element $Z_{O2}Z_{B2}Z_{G2}$, one indeed reduces to the quantum 2-torus $\mathbb{T}^2_q:=\complex\langle x^{\pm1},y^{\pm1}\rangle\big/(xy-qyx)$, $q\in\complex^*$ not a root of unity.
\end{proof}

\subsection{The matrix algebra for \texorpdfstring{$\mathsf{H}_{E_6}$}{}}
For $n=3$, we take the fat graph shown in \Cref{fig:3dimpaths} (insights on this choice in \Cref{re:coll-h}).
The matrix algebra resulting from the transport matrix factorization indeed delivers a representation of the universal $\tilde{E}_6$-type GDAHA.
\begin{figure}[!htb]
    \centering
    \includegraphics[height=9.3cm]{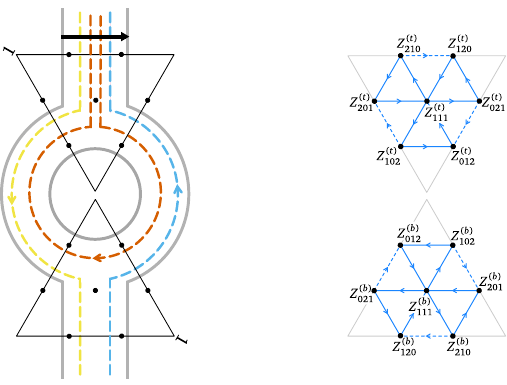}
    \caption{On the left: fat graph, triangles and relevant paths for $n=3$, with the transported edge displayed by the thick black arrow. The triangles are labelled as follows: $(t)$ for the top one and $(b)$ for the bottom one. $1$'s indicate the choice of labelling on the triangles. On the right: variables and corresponding quivers for each triangle \emph{before} any amalgamation is performed.}\label{fig:3dimpaths}
\end{figure}

The transport matrix \eqref{3dimT1}, computed in \Cref{ex-tr}, needs to be quantized and multiplied by the quantum correction $Q=\diag(q^{\nicefrac{5}{9}},q^{\nicefrac{-4}{9}},q^{\nicefrac{-13}{9}})$:
\begin{equation*}
\resizebox{\textwidth}{!}{$
T_1=
    \begin{pmatrix}
        q^{\frac{5}{9}}Z_ {111}^{\nicefrac{-1}{3}}Z_{102}^{\nicefrac{2}{3}}Z_{210}^{\nicefrac{-2}{3}}Z_{201}^{\nicefrac{1}{3}}Z_{120}^{\nicefrac{-1}{3}} &
  q^\frac{13}{18}\big(Z_{111}^{\nicefrac{-1}{3}}+q^{-1}Z_{111}^{\nicefrac{2}{3}}\big)Z_{102}^{\nicefrac{2}{3}}Z_{210}^{\nicefrac{2}{3}}Z_{120}^{\nicefrac{-1}{3}} & q^{\frac{2}{9}}Z_{111}^{\nicefrac{2}{3}}Z_{102}^{\nicefrac{2}{3}}Z_{210}^{\nicefrac{1}{3}}Z_{201}^{\nicefrac{1}{3}}Z_{120}^{\nicefrac{2}{3}} \\[0.5em]
    -q^{-\frac{11}{18}}Z_{111}^{\nicefrac{-1}{3}}Z_{102}^{\nicefrac{-1}{3}}Z_{210}^{\nicefrac{-2}{3}}Z_{201}^{\nicefrac{2}{3}} & -q^{-\frac{4}{9}}
   Z_{111}^{\nicefrac{-1}{3}}Z_{102}^{\nicefrac{-1}{3}}Z_{210}^{\nicefrac{1}{3}}Z_{201}^{\nicefrac{1}{3}}Z_{120}^{\nicefrac{-1}{3}} & 0 \\[0.5em]
  q^{-\frac{19}{9}}
   Z_{111}^{\nicefrac{-1}{3}}Z_{102}^{\nicefrac{-1}{3}}Z_ {210}^{\nicefrac{-2}{3}}Z_{201}^{\nicefrac{-2}{3}}Z_{120}^{\nicefrac{-1}{3}} & 0 & 0 \\
    \end{pmatrix}.$}
    \end{equation*}
Quantum $T_2$ and $T_3$ follow the same recipe.

The matrices corresponding to the paths can be read off from \Cref{fig:3dimpaths}. Denoting by  {\color{3yellow}Y} the matrix corresponding to the yellow path, {\color{3cyan}C} the one of the cyan path and  {\color{3red}R} that of the red one, we have
\begin{equation}\label{CYRfactorization}
\begin{aligned}
    {\color{3yellow}Y}&=q^{\frac{10}{9}}  \ S \ T_2^{(b)} \ S \ T_1^{(t)},\\
    {\color{3cyan}C}&=q^{\frac{10}{9}}  \ T_2^{(t)} \ S \ T_1^{(b)} \ S,\\
    {\color{3red}R}&=q^{\frac{10}{9}} \ T_1^{(t)-1} \ S  \  T_3^{(b)} \ S \ T_2^{(t)-1},
\end{aligned}
\end{equation}
with $q^{\nicefrac{10}{9}}$ factors introduced to set the product of each triple of Hecke parameters to the unit.
We then glue together the two open edges in \Cref{fig:3dimpaths}, closing all paths into loops.
Notice that the fat graph now corresponds to $\Sigma_{0,3,0}$, the three-holed Riemann sphere.
In order to amalgamate glued triangle sides properly, we perform a global conjugation by the following element in the Cartan subgroup (the so-called \textit{outer monodromy} in \cite{Goncharov2022}):
\begin{equation}\label{eq:Cartan}
    \diag(1,q^{\nicefrac{5}{6}},q^{\nicefrac{1}{3}})H_1(Z^{(t)}_{210}) H_2(Z^{(t)}_{120})=\begin{pmatrix}
        Z^{(t)\nicefrac{-2}{3}}_{210}Z^{(t)\nicefrac{-1}{3}}_{120} & 0 & 0\\[.5em]
        0 & q^{\nicefrac{5}{6}}Z^{(t)\nicefrac{1}{3}}_{210}Z^{(t)\nicefrac{-1}{3}}_{120} & 0\\[.5em]
        0 & 0 & q^{\nicefrac{1}{3}}Z^{(t)\nicefrac{1}{3}}_{210}Z^{(t)\nicefrac{2}{3}}_{120}
    \end{pmatrix},
    \end{equation}
the diagonal of $q$-factors chosen to simplify the resulting expressions.

The conjugated matrices, denoted again with the overbar, depend only on 
$Z^{(t)}_{111},\,Z^{(b)}_{111}$ and the following amalgamated variables
\begin{equation}
\begin{matrix}
    Z_{Y1}=Z^{(t)}_{201}Z^{(b)}_{021}, & Z_{Y2}=Z^{(t)}_{102}Z^{(b)}_{012}, & Z_{Y3}=Z^{(t)}_{210}Z^{(b)}_{120},\\[.5em]
    Z_{C1}=Z^{(b)}_{201}Z^{(t)}_{021}, & Z_{C2}=Z^{(b)}_{102}Z^{(t)}_{012}, & Z_{C3}=Z^{(b)}_{210}Z^{(t)}_{120}.
\end{matrix}
\end{equation}
The $q$-commutations are encoded by the diamond-shaped quiver in \Cref{fig:3amalquiver}.
\begin{figure}[!htb]
    \centering
    \includegraphics[width=\textwidth]{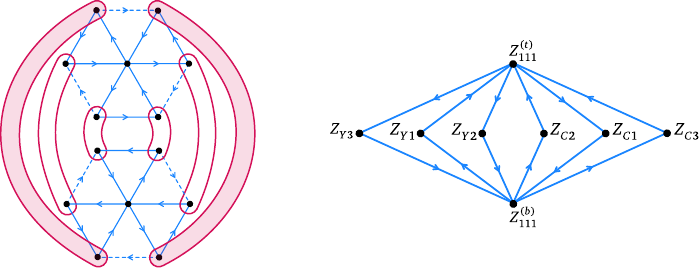}
    \caption{On the left, amalgamated pairs are highlighted in {\color{2red}red}, the shaded ones triggered by the global conjugation. Only the inner variables survive the amalgamations, one from each triangle. On the right, the resulting quiver of amalgamated variables: generators for the subalgebra of Casimir elements are given by $Z_{111}^{(t)}Z_{111}^{(b)}$ and all quiver cycles.}\label{fig:3amalquiver}
\end{figure}

We are finally ready to state the main \Cref{thm:E6-emb} in full detail:
\begin{theorem}\label{thm:TrueE6-emb}
Let $\pazocal{X}_{\pazocal{Q}_3}$ be the quantum $\pazocal{X}$-torus with coordinates
$$Z_{Y1},\,Z_{Y2},\,Z_{Y3},\,Z_{C1},\,Z_{C2},\,Z_{C3},\,Z^{(t)}_{111},\,Z^{(b)}_{111}$$
and $q$-commutations encoded by the quiver in \Cref{fig:3amalquiver}.
The $\SL_3(\pazocal{X}_{\pazocal{Q}_3}^{\nicefrac{1}{3}})$ matrices
\begin{equation}\label{RCY}
\resizebox{.95\textwidth}{!}{$
\begin{aligned}
    \overline{C}&=\begin{pmatrix}
             Z_{C1}^{\nicefrac{1}{3}}Z_{C2}^{\nicefrac{2}{3}}Z_{Y3}^{\nicefrac{2}{3}}Z_{C3}^{\nicefrac{1}{3}}Z_{111}^{(t)\nicefrac{2}{3}}Z_{111}^{(b)\nicefrac{2}{3}} & \overline{C}_{12} & \overline{C}_{13}\\[.5em]
            0 & Z_{C1}^{\nicefrac{1}{3}}Z_{C2}^{\nicefrac{-1}{3}}Z_{Y3}^{\nicefrac{-1}{3}}Z_{C3}^{\nicefrac{1}{3}}Z_{111}^{(t)\nicefrac{-1}{3}}Z_{111}^{(b)\nicefrac{-1}{3}} & \overline{C}_{23}\\[.5em]
     0 & 0 & Z_{C1}^{\nicefrac{-2}{3}}Z_{C2}^{\nicefrac{-1}{3}}Z_{Y3}^{\nicefrac{-1}{3}}Z_{C3}^{\nicefrac{-2}{3}}Z_{111}^{(t)\nicefrac{-1}{3}}Z_{111}^{(b)\nicefrac{-1}{3}} 
        \end{pmatrix},\\
    \overline{Y}&=\begin{pmatrix}
        Z_{Y1}^{\nicefrac{-2}{3}}Z_{Y2}^{\nicefrac{-1}{3}}Z_{Y3}^{\nicefrac{-2}{3}}Z_{C3}^{\nicefrac{-1}{3}}Z_{111}^{(t)\nicefrac{-1}{3}}Z_{111}^{(b)\nicefrac{-1}{3}} & 0 & 0\\[.5em]
        \overline{Y}_{21} & Z_{Y1}^{\nicefrac{1}{3}}Z_{Y2}^{\nicefrac{-1}{3}}Z_{Y3}^{\nicefrac{1}{3}}Z_{C3}^{\nicefrac{-1}{3}}Z_{111}^{(t)\nicefrac{-1}{3}}Z_{111}^{(b)\nicefrac{-1}{3}} & 0\\[.5em]
 \overline{Y}_{31} & \overline{Y}_{32} & Z_{Y1}^{\nicefrac{1}{3}}Z_{Y2}^{\nicefrac{2}{3}}Z_{Y3}^{\nicefrac{1}{3}}Z_{C3}^{\nicefrac{2}{3}}Z_{111}^{(t)\nicefrac{2}{3}}Z_{111}^{(b)\nicefrac{2}{3}}
    \end{pmatrix},
\end{aligned}$}
\end{equation}
with
\begin{equation*}\resizebox{\textwidth}{!}{$
\begin{aligned}
\overline{C}_{12}&=-q^\frac{1}{3}Z_{C1}^{\nicefrac{1}{3}}Z_{C2}^{\nicefrac{-1}{3}}Z_{Y3}^{\nicefrac{-1}{3}}Z_{C3}^{\nicefrac{1}{3}}Z_{111}^{(t)\nicefrac{-1}{3}}Z_{111}^{(b)\nicefrac{-1}{3}}-Z_{C1}^{\nicefrac{1}{3}}\big(Z_{C2}^{\nicefrac{-1}{3}}+q^{-1}Z_{C2}^{\nicefrac{2}{3}}\big)Z_{Y3}^{\nicefrac{-1}{3}}Z_{C3}^{\nicefrac{1}{3}}Z_{111}^{(t)\nicefrac{2}{3}}Z_{111}^{(b)\nicefrac{-1}{3}}\\
&\phantom{=}-Z_{C1}^{\nicefrac{1}{3}}Z_{C2}^{\nicefrac{2}{3}}Z_{Y3}^{\nicefrac{-1}{3}}Z_{C3}^{\nicefrac{1}{3}}Z_{111}^{(t)\nicefrac{2}{3}}Z_{111}^{(b)\nicefrac{2}{3}},\\[.5em]
 \overline{C}_{13}&=\big(Z_{C1}^{\nicefrac{1}{3}}+q^\frac{1}{3}Z_{C1}^{\nicefrac{-2}{3}}\big)Z_{C2}^{\nicefrac{-1}{3}}Z_{Y3}^{\nicefrac{-1}{3}}Z_{C3}^{\nicefrac{-2}{3}}Z_{111}^{(t)\nicefrac{-1}{3}}Z_{111}^{(b)\nicefrac{-1}{3}}+Z_{C1}^{\nicefrac{1}{3}}\big(q^\frac{5}{3}Z_{C2}^{\nicefrac{-1}{3}}+q^\frac{2}{3}Z_{C2}^{\nicefrac{2}{3}}\big)Z_{Y3}^{\nicefrac{-1}{3}}Z_{C3}^{\nicefrac{-2}{3}}Z_{111}^{(t)\nicefrac{2}{3}}Z_{111}^{(b)\nicefrac{-1}{3}},\\[.5em]
 \overline{C}_{23}&=-\big(q^{-\frac{1}{3}}Z_{C1}^{\nicefrac{1}{3}}+Z_{C1}^{\nicefrac{-2}{3}}\big)Z_{C2}^{\nicefrac{-1}{3}}Z_{Y3}^{\nicefrac{-1}{3}}Z_{C3}^{\nicefrac{-2}{3}}Z_{111}^{(t)\nicefrac{-1}{3}}Z_{111}^{(b)\nicefrac{-1}{3}},\\[1em]\overline{Y}_{21}&=\big(q^\frac{1}{3}Z_{Y1}^{\nicefrac{1}{3}}+
 Z_{Y1}^{\nicefrac{-2}{3}}\big)Z_{Y2}^{\nicefrac{-1}{3}}Z_{Y3}^{\nicefrac{1}{3}}Z_{C3}^{\nicefrac{-1}{3}}Z_{111}^{(t)\nicefrac{-1}{3}}Z_{111}^{(b)\nicefrac{-1}{3}},\\[.5em]
    \overline{Y}_{31}&=\big(Z_{Y1}^{\nicefrac{1}{3}} + q^\frac{1}{3}Z_{Y1}^{\nicefrac{-2}{3}}\big)Z_{Y2}^{\nicefrac{-1}{3}}Z_{Y3}^{\nicefrac{1}{3}}Z_{C3}^{\nicefrac{2}{3}}Z_{111}^{(t)\nicefrac{-1}{3}}Z_{111}^{(b)\nicefrac{-1}{3}} + 
 Z_{Y1}^{\nicefrac{1}{3}}\big(qZ_{Y2}^{\nicefrac{-1}{3}} + Z_{Y2}^{\nicefrac{2}{3}}\big)Z_{Y3}^{\nicefrac{1}{3}}Z_{C3}^{\nicefrac{2}{3}}Z_{111}^{(t)\nicefrac{-1}{3}}Z_{111}^{(b)\nicefrac{2}{3}},\\[.5em]
    \overline{Y}_{32}&=q^\frac{1}{3} 
 Z_{Y1}^{\nicefrac{1}{3}}Z_{Y2}^{\nicefrac{-1}{3}}Z_{Y3}^{\nicefrac{1}{3}}Z_{C3}^{\nicefrac{2}{3}}Z_{111}^{(t)\nicefrac{-1}{3}}Z_{111}^{(b)\nicefrac{-1}{3}}+Z_{Y1}^{\nicefrac{1}{3}}\big(q^\frac{4}{3}Z_{Y2}^{\nicefrac{-1}{3}}+q^\frac{1}{3}Z_{Y2}^{\nicefrac{2}{3}}\big)Z_{Y3}^{\nicefrac{1}{3}}Z_{C3}^{\nicefrac{2}{3}}Z_{111}^{(t)\nicefrac{-1}{3}}Z_{111}^{(b)\nicefrac{2}{3}}\\
 &\phantom{=}+Z_{Y1}^{\nicefrac{1}{3}}Z_{Y2}^{\nicefrac{2}{3}}Z_{Y3}^{\nicefrac{1}{3}}Z_{C3}^{\nicefrac{2}{3}}Z_{111}^{(t)\nicefrac{2}{3}}Z_{111}^{(b)\nicefrac{2}{3}},
\end{aligned}$}
\end{equation*}
satisfy, together with their quantum inverse product $\overline{R}$ displayed in \Cref{app:formulae}, the relations
\begin{equation}\label{H-rel-CYR}
\begin{aligned}
    \left(\overline{C}-Z_{C1}^{\nicefrac{-2}{3}}Z_{C2}^{\nicefrac{-1}{3}}Z_{Y3}^{\nicefrac{-1}{3}}Z_{C3}^{\nicefrac{-2}{3}}Z_{111}^{(t)\nicefrac{-1}{3}}Z_{111}^{(b)\nicefrac{-1}{3}}\One\right)&\left(\overline{C}-Z_{C1}^{\nicefrac{1}{3}}Z_{C2}^{\nicefrac{-1}{3}}Z_{Y3}^{\nicefrac{-1}{3}}Z_{C3}^{\nicefrac{1}{3}}Z_{111}^{(t)\nicefrac{-1}{3}}Z_{111}^{(b)\nicefrac{-1}{3}}\One\right)\\
    &\qquad \left(\overline{C}-Z_{C1}^{\nicefrac{1}{3}}Z_{C2}^{\nicefrac{2}{3}}Z_{Y3}^{\nicefrac{2}{3}}Z_{C3}^{\nicefrac{1}{3}}Z_{111}^{(t)\nicefrac{2}{3}}Z_{111}^{(b)\nicefrac{2}{3}}\One\right)=\Zero,\\
    \left(\overline{Y}-Z_{Y1}^{\nicefrac{-2}{3}}Z_{Y2}^{\nicefrac{-1}{3}}Z_{Y3}^{\nicefrac{-2}{3}}Z_{C3}^{\nicefrac{-1}{3}}Z_{111}^{(t)\nicefrac{-1}{3}}Z_{111}^{(b)\nicefrac{-1}{3}}\One\right)&\left(\overline{Y}-Z_{Y1}^{\nicefrac{1}{3}}Z_{Y2}^{\nicefrac{-1}{3}}Z_{Y3}^{\nicefrac{1}{3}}Z_{C3}^{\nicefrac{-1}{3}}Z_{111}^{(t)\nicefrac{-1}{3}}Z_{111}^{(b)\nicefrac{-1}{3}}\One\right) \\
    &\qquad \left(\overline{Y}-Z_{Y1}^{\nicefrac{1}{3}}Z_{Y2}^{\nicefrac{2}{3}}Z_{Y3}^{\nicefrac{1}{3}}Z_{C3}^{\nicefrac{2}{3}}Z_{111}^{(t)\nicefrac{2}{3}}Z_{111}^{(b)\nicefrac{2}{3}}\One\right)=\Zero,\\
    \left(\overline{R}-Z_{Y1}^{\nicefrac{-1}{3}}Z_{Y2}^{\nicefrac{-2}{3}}Z_{C1}^{\nicefrac{-1}{3}}Z_{C2}^{\nicefrac{-2}{3}}Z_{111}^{(t)\nicefrac{-1}{3}}Z_{111}^{(b)\nicefrac{-1}{3}}\One\right)&\left(\overline{R}-Z_{Y1}^{\nicefrac{-1}{3}}Z_{Y2}^{\nicefrac{1}{3}}Z_{C1}^{\nicefrac{-1}{3}}Z_{C2}^{\nicefrac{1}{3}}Z_{111}^{(t)\nicefrac{-1}{3}}Z_{111}^{(b)\nicefrac{-1}{3}}\One\right)\\
     &\qquad \left(\overline{R}-Z_{Y1}^{\nicefrac{2}{3}}Z_{Y2}^{\nicefrac{1}{3}}Z_{C1}^{\nicefrac{2}{3}}Z_{C2}^{\nicefrac{1}{3}}Z_{111}^{(t)\nicefrac{2}{3}}Z_{111}^{(b)\nicefrac{2}{3}}\One\right)=\Zero,\\
    &\phantom{----------------\,\,\,}\overline{C}\,\overline{Y}\,\overline{R}=q^{\nicefrac{2}{3}}\One.
\end{aligned}
\end{equation}
The map $J_1\to \overline{ C}$, $J_2\to  \overline{Y}$, $J_3\to  \overline{R},\, q \to q^{-2}$ defines an embedding of $\mathsf{H}_{E_6}$ into $\Mat_3(\pazocal{X}_{\pazocal{Q}_3}^{\nicefrac{1}{3}})$.
\end{theorem}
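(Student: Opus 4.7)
The plan is to mimic the strategy used for $\mathsf{H}_{D_4}$ in \Cref{thm:TrueD4-emb}, combining the transport matrix machinery of \Cref{sec:looprep} with direct noncommutative computation. First, I construct the three matrices explicitly. For the fat graph of $\Sigma_{0,3,0}$ in \Cref{fig:3dimpaths}, the paths $\color{3cyan}C$, $\color{3yellow}Y$, $\color{3red}R$ factorize as in \eqref{CYRfactorization} in terms of quantum transport matrices $T^{(t)}_i, T^{(b)}_i$ and the block $S^q$. The two $q^{\nicefrac{10}{9}}$ prefactors are tuned so that each eventual Hecke triple of eigenvalues multiplies to one. I then conjugate by the Cartan element \eqref{eq:Cartan}, which implements the base-point amalgamation (on top of the side-identification amalgamations displayed in \Cref{fig:3amalquiver}), producing $\overline C, \overline Y$ with entries depending only on the amalgamated coordinates of $\pazocal{X}_{\pazocal{Q}_3}^{\nicefrac{1}{3}}$. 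I finally \emph{define} $\overline R := q^{\nicefrac{2}{3}}(\overline C\,\overline Y)^{-1}$, so that the cyclic relation $\overline C\,\overline Y\,\overline R = q^{\nicefrac{2}{3}}\One$ holds by construction; the explicit form in \Cref{app:formulae} is obtained by direct matrix inversion using the triangular structure outlined below.

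The next step is to verify the three Hecke relations. By inspection of \eqref{CYRfactorization} after the conjugation, $\overline C$ turns out to be upper triangular and $\overline Y$ lower triangular, with diagonal entries precisely the monomials appearing as Hecke parameters in \eqref{H-rel-CYR}. For a triangular matrix in a noncommutative ring, the Cayley--Hamilton style factorization $(M-d_1)(M-d_2)(M-d_3)=\Zero$ still goes through provided the diagonal entries $d_i$ are central, which I check by inspecting the quiver $\pazocal{Q}_3$ of \Cref{fig:3amalquiver}: each Hecke-parameter monomial is read off as a closed cycle in $\pazocal{Q}_3$ with balanced in- and outflow at every vertex, hence commutes with every generator. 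The remaining Hecke relation for $\overline R$, whose off-diagonal entries are nontrivial, is the computationally demanding piece; it is handled by a direct symbolic expansion in $\pazocal{X}_{\pazocal{Q}_3}^{\nicefrac{1}{3}}$ using \texttt{NCAlgebra} (with the $q$-commutation rules of $\pazocal{Q}_3$ as rewriting relations), as in the companion notebook \cite{DalMartello2023}.

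For the embedding statement, I first check that the six formal parameters $t_i^{(j)}$ of $\mathsf{H}_{E_6}$ can be recovered independently from the monomials appearing in \eqref{H-rel-CYR} under $q \mapsto q^{-2}$: this is a short linear-algebra verification on the exponent vectors of the $Z$-variables, with no specialization arising as in \Cref{thm:functor}. Since $\mathsf{H}_{E_6}$ is defined by generators and relations, the map $J_1\mapsto\overline C$, $J_2\mapsto\overline Y$, $J_3\mapsto\overline R$, $q\mapsto q^{-2}$ then extends to an algebra homomorphism. Injectivity (faithfulness as a representation) follows by picking any faithful representation of the quantum $\pazocal{X}$-torus $\pazocal{X}_{\pazocal{Q}_3}^{\nicefrac{1}{3}}$ (which exists since $\pazocal{Q}_3$ has no loops nor $2$-cycles, so the torus is an Ore domain) and composing: the resulting faithful action of $\Mat_3(\pazocal{X}_{\pazocal{Q}_3}^{\nicefrac{1}{3}})$ restricts to one on the image of $\mathsf{H}_{E_6}$, and freeness of the parameters ensures that no nontrivial relation beyond those of $\mathsf{H}_{E_6}$ can hold generically.

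The main obstacle is the last one in computational guise and the last one in conceptual guise: concretely, verifying the Hecke relation for $\overline R$ requires expanding a triple product of $3\times3$ matrices whose entries are sums of many noncommuting Laurent monomials, where correct tracking of the $q$-powers from the Weyl ordering \eqref{Weyl} is essential; conceptually, promoting the homomorphism property to an embedding of the \emph{universal} algebra $\mathsf{H}_{E_6}$ (rather than just a specialization) relies crucially on the fact that no two distinct monomials among the Hecke parameters coincide in $\pazocal{X}_{\pazocal{Q}_3}^{\nicefrac{1}{3}}$, which again is a quiver-combinatorial check on \Cref{fig:3amalquiver}.
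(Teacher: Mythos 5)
Your construction of $\overline C,\overline Y,\overline R$ and your verification of the cyclic and Hecke relations follow essentially the same path as the paper, and your observation that the Hecke relations for the triangular matrices $\overline C$ and $\overline Y$ follow directly from centrality of their diagonal entries (so that $(M-d_3)(M-d_2)(M-d_1)$ expands to a Cayley--Hamilton polynomial with central coefficients, annihilating any triangular $M$ with diagonal $(d_1,d_2,d_3)$) is a nice tidying of the argument that the paper only invokes implicitly in \Cref{lem:RepE_6}. Your recipe for reading off centrality of the Hecke parameters from balanced in/outflow in $\pazocal{Q}_3$ is also the same quiver-combinatorial check the paper performs.

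However, your justification that the resulting map is an \emph{embedding} of $\mathsf{H}_{E_6}$ has a genuine gap. You argue: pick a faithful representation $\rho$ of $\pazocal{X}_{\pazocal{Q}_3}^{\nicefrac{1}{3}}$; then $\Mat_3(\pazocal{X}_{\pazocal{Q}_3}^{\nicefrac{1}{3}})$ acts faithfully; this ``restricts to a faithful action on the image of $\mathsf{H}_{E_6}$''; and ``freeness of the parameters ensures that no nontrivial relation beyond those of $\mathsf{H}_{E_6}$ can hold generically.'' The first three steps only show that the \emph{image} subalgebra acts faithfully, which is tautological; they say nothing about whether the defining map $\mathsf{H}_{E_6}\to\Mat_3(\pazocal{X}_{\pazocal{Q}_3}^{\nicefrac{1}{3}})$ has trivial kernel. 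The last step is precisely the thing to be proved, and ``freeness of the parameters'' (i.e., independence of the exponent vectors of the Hecke parameters) is necessary but far from sufficient: it rules out that distinct specializations collapse, but does not rule out that some fixed nonzero element of $\mathsf{H}_{E_6}$ maps to zero for all parameter values. The paper closes this gap with a nontrivial structural input: for generic $t$ and $q\neq1$, the rank~$1$ GDAHA of type $\tilde E_6$ is prime (Morita-equivalent to its spherical subalgebra, whose associated graded is a domain, Theorems 6.5, 6.10 of \cite{Etingof2007}), has no finite-dimensional representations, and is in fact simple; consequently any nonzero representation is automatically faithful, and composing with the faithful $\rho$ then forces the original map to be injective. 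Without appealing to simplicity (or some comparable structural result, such as a PBW basis for $\mathsf{H}_{E_6}$), your argument does not establish injectivity.

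One further minor point: you state that $\overline C$ is upper triangular and $\overline Y$ lower triangular ``by inspection of \eqref{CYRfactorization} after the conjugation.'' This does hold, but it is a nontrivial consequence of the Cartan conjugation \eqref{eq:Cartan} combined with the snake-calculus structure of the quantum transport matrices; as stated, it reads as an assertion rather than a derivation. The paper simply presents the resulting matrices, so this is not a gap per se, but you should at least note where the triangularity comes from if you intend to lean on it for the Hecke relations.
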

\begin{proof}
First observe that, unlike for the whole quadruple \eqref{OBGP}, all coordinates make a fractional appearance.
Proving that $\overline{Y}$, $\overline{C}$ and $\overline{R}$ satisfy relations \eqref{H-rel-CYR} is a  direct computation, which can be reproduced in the Mathematica companion \cite{DalMartello2023}. The Hecke parameters are central being products of pairs of variables having arrows with opposite directions. As an example, take 
\begin{align*}\overline{C}_{11}=Z_{C1}^{\nicefrac{1}{3}}Z_{C2}^{\nicefrac{2}{3}}Z_{Y3}^{\nicefrac{2}{3}}Z_{C3}^{\nicefrac{1}{3}}Z_{111}^{(t)\nicefrac{2}{3}}Z_{111}^{(b)\nicefrac{2}{3}}&=(Z_{C2}^{\nicefrac{2}{3}}Z_{Y3}^{\nicefrac{2}{3}})(Z_{C1}^{\nicefrac{1}{3}}Z_{C3}^{\nicefrac{1}{3}})(Z_{111}^{(t)\nicefrac{2}{3}}Z_{111}^{(b)\nicefrac{2}{3}})\\
&=(Z_{C2}Z_{Y3})^{\nicefrac{2}{3}}(Z_{C1}Z_{C3})^{\nicefrac{1}{3}}(Z_{111}^{(t)}Z_{111}^{(b)})^{\nicefrac{2}{3}}.
\end{align*}
For each bracketed pair, arrows cancel out: e.g., the $q$-factors due to arrows $Z_{C2}\rightarrow Z^{(b)}_{111}$ and $Z_{C2}\leftarrow Z^{(t)}_{111}$ are respectively absorbed by the ones due to $Z_{Y3}\leftarrow Z^{(b)}_{111}$ and $Z_{Y3}\rightarrow Z^{(t)}_{111}$.

The following inversion formulae, expressing central elements of $\pazocal{X}_{\pazocal{Q}_3}$ in terms of the tuple $t$, prove that this representation fully recovers the universal $\mathsf{H}_{E_6}$:
\begin{equation}
    \begin{matrix}
        Z_{C1}Z_{Y3}^{-1}= \frac{t_1^{(2)}}{t_2^{(1)}t_2^{(2)}t_3^{(1)}t_3^{(2)}}, & Z_{C2}Z_{Y3}= t_1^{(1)}t_2^{(2)}t_3^{(2)}, & Z_{Y1}Z_{Y3}= t_2^{(1)}t_2^{(2)},\\[.5em]
        Z_{Y2}Z_{Y3}^{-1}= \frac{1}{t_1^{(1)}t_2^{(2)}t_3^{(1)}}, & Z_{C3}Z_{Y3}= t_1^{(1)}t_1^{(2)}t_2^{(1)}t_2^{(2)}t_3^{(1)}t_3^{(2)}, & Z^{(t)}_{111}Z^{(b)}_{111}= \frac{1}{t_1^{(2)}t_2^{(2)}t_3^{(2)}}.
    \end{matrix}
\end{equation}

The fact that the map is an embedding can be proved by choosing a faithful representation of $\pazocal{X}_{\pazocal{Q}_3}^{\nicefrac{1}{3}}$, namely a vector space $V$ and an algebra homomorphism $\rho:\pazocal{X}_{\pazocal{Q}_3}^{\nicefrac{1}{3}}\to \mathrm{End}(V)$. The resulting map $\tilde\rho:\mathsf{H}_{E_6}\to \Mat_3(\mathrm{End}(V))$ gives a representation of $\mathsf{H}_{E_6}$ on $\bigoplus_3V$.
Now, the rank 1 GDAHA of type $\tilde{E}_6$ is prime. Indeed,  for generic values of parameters, it is Morita equivalent to its spherical subalgebra,
whose associated graded algebra is a twisted homogeneous coordinate ring of an irreducible curve, and therefore is a domain (Theorems 6.5, 6.10 in \cite{Etingof2007}). Furthermore, for $q\neq1$ it has no finite dimensional representations and is in fact simple\footnote{We thank P. Etingof for clarifying this argument to us.}.
This proves that $\tilde\rho$ is injective, thus so is our map\footnote{A direct proof of the representation's faithfulness, using a true polynomial PBW basis for $\mathsf{H}_{E_6}$,  will appear in a forthcoming paper by the first author.}.
\end{proof}

\section{Quiver seizure}\label{sec:cluster-seizure}
In this section, after applying the functor $\mathcal{F}_q$ to the matrix triple $(K_1,K_2,K_3)=(\overline{O},\overline{B},\overline{G})$ from \Cref{thm:TrueD4-emb}, we prove \Cref{thm:last}.

As prescribed in \Cref{sec:MC}, we start by rescaling:
\begin{equation}\label{rescOBG}
\begin{aligned}
    \widehat{K}_1=Z_{O1}^{\nicefrac{-1}{2}}\,\overline{O}&=\begin{pmatrix}
        0 & Z_{O1}^{-1}Z_{O2}^{-1}\\[.5em]
       -Z_{O2} & 1+Z_{O1}^{-1}
    \end{pmatrix},\\
    \widehat{K}_2=Z_{B1}^{\nicefrac{-1}{2}}\,\overline{B}&=\begin{pmatrix}
        1+Z_{B1}^{-1}+Z_{B1}^{-1}Z_{B2}^{-1} & 1+Z_{B1}^{-1}+Z_{B1}^{-1}Z_{B2}^{-1}+Z_{B2}\\[.5em]
        -Z_{B1}^{-1}Z_{B2}^{-1} & -Z_{B1}^{-1}Z_{B2}^{-1}
    \end{pmatrix},\\
    \widehat{K}_3=Z_{G1}^{\nicefrac{-1}{2}}\,\overline{G}&=\begin{pmatrix}
        1+Z_{G1}^{-1}+Z_{G2} & Z_{G2}\\[.5em]
        -1-Z_{G1}^{-1}-Z_{
        G1}^{-1}Z_{G2}^{-1}-Z_{G2} & -Z_{G2}
    \end{pmatrix}.
\end{aligned}
\end{equation}
Notice that all rescaled matrices are free from fractional coordinates, allowing to be more easily handled over $\pazocal{X}_{\pazocal{Q}_2}$.
Moreover, since our input to $\mathcal F_q$ is given by $2\times 2$ matrices, the operation $\mathcal C$ will produce $6\times 6$ matrices $N_1,N_2,N_3$.

In order to perform concretely the quotient in \Cref{prop:Misfunctor}, we need an explicit characterization of the eigenspaces $V_1^{(2)},V_2^{(2)},V_3^{(2)}$ we have to resctict to.
Selecting a representation of $\pazocal{X}_{\pazocal{Q}_2}$ on a vector space  $\pazocal{V}$, we fit the framework of genuine representations on vector spaces developed in \Cref{sec:functor}: indeed, this allows to view the matrices $\widehat K_i$ as elements in $\mathrm{End}(\pazocal{V}\oplus\pazocal{V})$, namely $V:=\pazocal{V}\oplus\pazocal{V}$.
Now, solving for eigenspaces is more conveniently carried out by reading formulae \eqref{rescOBG} as arrows in $\mathrm{Hom}_{\mathsf{Mod}\text{-}\pazocal{X}_{\pazocal{Q}_2}}(\oplus_2\pazocal{X}_{\pazocal{Q}_2} ,\oplus_2\pazocal{X}_{\pazocal{Q}_2})$, where $\mathsf{Mod}$-$\pazocal{X}_{\pazocal{Q}_2}$ denotes the category of right $\pazocal{X}_{\pazocal{Q}_2}$-modules, namely having the rescaled matrices act in the usual way on columns in $\Mat_{2\times1}(\pazocal{X}_{\pazocal{Q}_2})$. Indeed, computing eigenspaces in $\pazocal{V}\oplus\pazocal{V}$ amounts to solving $\pazocal{X}_{\pazocal{Q}_2}$-linear equations.
\begin{remark}
    $\pazocal{X}_{\pazocal{Q}_2}$, being essentially a quantum 3-torus, is known to be an Ore domain \cite{Berenstein2005} whose ring of fractions $\mathrm{Frac}(\pazocal{X}_{\pazocal{Q}_2})$ is a division algebra. Therefore, $\oplus_2\pazocal{X}_{\pazocal{Q}_2}$ is well-defined as the free rank $2$ $\pazocal{X}_{\pazocal{Q}_2}$-module.
\end{remark}
\begin{proposition}
As rank $1$ $\pazocal{X}_{\pazocal{Q}_2}$-submodules of $\oplus_2\pazocal{X}_{\pazocal{Q}_2}$, the eigenspaces $V^{(2)}_i=e_i(\pazocal{V}\oplus\pazocal{V})$ read
\begin{equation}
V^{(2)}_1=\big\langle(Z_{O2}^{-1},1)^T\big\rangle,\quad V^{(2)}_2=\big\langle(-1-Z_{B2},1)^T\big\rangle,\quad V^{(2)}_3=\big\langle(-1,1+Z_{G2}^{-1})^T\big\rangle.
\end{equation}
Then,
\begin{equation}\label{charE(V)}    E(\pazocal{V}\oplus\pazocal{V})=\big\langle(Z_{O2}^{-1},1,0,0,0,0)^T,\,(0,0,-1-Z_{B2},1,0,0)^T,\,(0,0,0,0,-1,1+Z_{G2}^{-1})^T\big\rangle
\end{equation}
\end{proposition}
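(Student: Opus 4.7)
The plan hinges on \Cref{lm:idempotents}: $e_i$ projects onto the eigenspace $V_i^{(2)}$ of $\widehat{K}_i$ at eigenvalue $t_i^{-2}$, so each $V_i^{(2)}$ is pinned down by solving a single eigenvalue equation. First I would identify the Hecke parameters for the rescaled matrices in \eqref{rescOBG}. Applying the rescaling $\widehat{K}_1 = Z_{O1}^{\nicefrac{-1}{2}}\,\overline{O}$ to the first Hecke relation of \Cref{thm:TrueD4-emb} and comparing with \eqref{D4-gen} forces $t_1^{-2} = Z_{O1}^{-1}$, and analogously $t_2^{-2} = Z_{B1}^{-1}$ and $t_3^{-2} = Z_{G1}^{-1}$. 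These scalars are central in $\pazocal{X}_{\pazocal{Q}_2}$ by \Cref{rmk:2amalvars}, so the eigenvalue equation is unambiguous even though the algebra is noncommutative.

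Next, for each $i$ I would solve $\widehat{K}_i\mathbf{v} = t_i^{-2}\mathbf{v}$ with $\mathbf{v}=(a,b)^T$ in the free right module $\bigoplus_2 \pazocal{X}_{\pazocal{Q}_2}$. Since $\widehat{K}_i$ satisfies a degree-$2$ Hecke relation with distinct roots, the matrix $\widehat{K}_i - t_i^{-2}\One$ has rank one, so both rows of the system produce proportional constraints on $(a,b)$ and the solution set is a rank-$1$ submodule. For $\widehat{K}_1$ the top row collapses to $a = Z_{O2}^{-1}b$, giving the generator $(Z_{O2}^{-1}, 1)^T$. For $\widehat{K}_2$ the bottom row gives $a + b = -Z_{B2}b$, yielding $(-1 - Z_{B2}, 1)^T$. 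For $\widehat{K}_3$ the top row reduces to $(1 + Z_{G2})a = -Z_{G2}b$, solved by $(-1,\, 1 + Z_{G2}^{-1})^T$ via the identity $-(1+Z_{G2}) = -Z_{G2}(1 + Z_{G2}^{-1})$. The consistency of the unused row in each case is guaranteed by the Hecke relation, but I would verify it by direct substitution, taking care with the ordering of non-commuting factors $Z_{\cdot 2}$ (the centrality of $Z_{\cdot 1}$ makes this painless).

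Finally, by \Cref{def-MC} we have $E(\pazocal{V}\oplus\pazocal{V}) = e_1(\pazocal{V}\oplus\pazocal{V}) \oplus e_2(\pazocal{V}\oplus\pazocal{V}) \oplus e_3(\pazocal{V}\oplus\pazocal{V})$, embedded into $\bigoplus_3(\pazocal{V}\oplus\pazocal{V})$. The three generators displayed in \eqref{charE(V)} are obtained by inserting each $V_i^{(2)}$-generator into the $i$-th $2$-dimensional block; their right $\pazocal{X}_{\pazocal{Q}_2}$-linear independence is immediate because they occupy pairwise disjoint coordinate slots. The only potential obstacle is purely bookkeeping: making sure that the noncommutative expansions in the verification step are ordered correctly and that the right-module conventions on $\pazocal{V}\oplus\pazocal{V}$ (matrices acting on the left, scalars on the right) are kept consistent throughout.
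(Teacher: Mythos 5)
Your proof is correct and follows essentially the same route as the paper's: both solve the eigenvalue equations $\widehat{K}_i\mathbf{v}=Z_{\cdot 1}^{-1}\mathbf{v}$ row by row in the free right module and observe that the second equation of each pair collapses tautologically. The only difference in emphasis is that you justify the one-equation collapse via a heuristic ``rank one'' argument from the Hecke relation, whereas the paper is more guarded (noncommutative rank is delicate) and simply notes that the very special entries of the triple allow the system to be solved over $\pazocal{X}_{\pazocal{Q}_2}$ rather than $\mathrm{Frac}(\pazocal{X}_{\pazocal{Q}_2})$ — but since you also say you would verify the unused row by direct substitution, the argument is sound.
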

\begin{proof}
    It is a straightforward computation: e.g., looking for $(a,b)^T\in\oplus_2\pazocal{X}_{\pazocal{Q}_2}$ such that
    \begin{equation*}
        \begin{pmatrix}
        1+Z_{B1}^{-1}+Z_{B1}^{-1}Z_{B2}^{-1} & 1+Z_{B1}^{-1}+Z_{B1}^{-1}Z_{B2}^{-1}+Z_{B2}\\[.5em]
        -Z_{B1}^{-1}Z_{B2}^{-1} & -Z_{B1}^{-1}Z_{B2}^{-1}
    \end{pmatrix}\begin{pmatrix}
        a \\
        b
    \end{pmatrix}=Z_{B1}^{-1}\begin{pmatrix}
        a \\
        b
    \end{pmatrix},
    \end{equation*}
    one immediately obtains $a=-b-Z_{B2}b$ from the second equation and this tautologically satisfies the first one. Notice that all three pairs of equations can be solved in $\pazocal{X}_{\pazocal{Q}_2}$, providing each a rank-$1$ submodule, due to the very special entries of the triple. In general, one must resort to  $\mathrm{Frac}(\pazocal{X}_{\pazocal{Q}_2})$ to invert generic entries.
\end{proof}
Reading $\mathcal{C}(\widehat{\mathbf{K}})$ over the generators \eqref{charE(V)}, we obtain a triple of pseudo-reflections $(R_1,R_2,R_3)$ encoded by the following $A$ matrix (see \Cref{se:q-killing}):
\begin{equation}
\resizebox{\textwidth}{!}{$
   {A}=
                \begin{pmatrix}
                    Z_{O1}^{-1} & (1-Z_{B1}^{-1})\frac{1+Z_{O1}Z_{O2}(1+Z_{B2})}{Z_{O1}-1} & (1-Z_{G1}^{-1})\frac{1+Z_{O1}Z_{O2}+Z_{G2}^{-1}}{Z_{O1}-1}\\[.5em] 
                    (Z_{O1}^{-1}-1)\frac{Z_{B1}+(1+q^2Z_{O2}^{-1})Z_{B2}^{-1}}{Z_{B1}-1} & Z_{B1}^{-1} &  
                    (Z_{G1}^{-1}-1)\frac{Z_{B1}+(Z_{B1}+Z_{B2}^{-1})Z_{G2}^{-1}}{Z_{B1}-1}\\[.5em] 
                    (Z_{O1}^{-1}-1)\frac{Z_{G1}Z_{G2}+Z_{O2}^{-1}(1+q^2Z_{G1}Z_{G2})}{Z_{G1}-1} & (1-Z_{B1}^{-1})\frac{1+Z_{B2}(1+q^2Z_{G1}Z_{G2})}{Z_{G1}-1} & Z_{G1}^{-1}
                \end{pmatrix}.$}
\end{equation}

We choose a more polished $(\overline{R}_1,\overline{R}_2,\overline{R}_3)$  by performing a global diagonal conjugation, which manifestly preserves the pseudo-reflection structure of the whole triple:
\begin{align*}
   \overline{R}_1 &:=CR_1C^{-1}= 
                \begin{pmatrix}
                    Z_{O1}^{-1} & -1-(1+Z_{O1}^{-1}Z_{O2}^{-1})Z_{B2}^{-1} & -q^{-1}Z_{B2}^{-1}-q^{-1}Z_{O1}^{-1}Z_{O2}^{-1}Z_{B2}^{-1}(1+q^2Z_{G2}^{-1})\\ 
                    0 & 1 & 0\\ 
                    0 & 0 & 1\\
                \end{pmatrix},\allowdisplaybreaks\\
   \overline{R}_2 &:=CR_2C^{-1}= 
                \begin{pmatrix}
                    1 & 0 & 0\\ 
                     Z_{B1}^{-1}+Z_{B1}^{-1}Z_{O2}+q^2Z_{O2}Z_{B2} & Z_{B1}^{-1} & -q^{-1}-q(1+Z_{B1}^{-1}Z_{B2}^{-1})Z_{G2}^{-1}\\ 
                    0 & 0 & 1\\
                \end{pmatrix},\allowdisplaybreaks\\
    \overline{R}_3 &:=CR_3C^{-1}= 
                \begin{pmatrix}
                    1 & 0 & 0\\ 
                    0 & 1 & 0\\ 
                   qZ_{G1}^{-1}Z_{B2}+q(Z_{O2}+1)Z_{B2}Z_{G2} & qZ_{G1}^{-1}(1+Z_{B2})+qZ_{B2}Z_{G2} & Z_{G1}^{-1}
                \end{pmatrix},
\end{align*}
for $C:=\diag(Z_{O2}^{-1}-Z_{O1}^{-1}Z_{O2}^{-1},-Z_{B2}+Z_{B1}^{-1}Z_{B2},-qZ_{B2}+qZ_{B1}^{-1}Z_{G2})\in\GL_3(\pazocal{X}_{\pazocal{Q}_2})$.
As detailed below in \Cref{prop-match}, this special choice selects in the conjugacy class of pseudo-reflections the one allowing for a direct match with the triple \eqref{RCY}.

The quantum Killing factorization of $\overline{R}_1 \overline{R}_2 \overline{R}_3 $ reads
\begin{equation}\label{eq:ULafterMC}
\resizebox{\textwidth}{!}{$
\begin{aligned}
    &U= 
                \begin{pmatrix}
                    1 & -1-(1+Z_{O1}^{-1}Z_{O2}^{-1})Z_{B2}^{-1} & q^{-1} + qZ_{G2}^{-1} + q(1 + Z_{B1}^{-1} + Z_{B1}^{-1}Z_{B2}^{-1} + Z_{O1}^{-1}Z_{B1}^{-1}Z_{O2}^{-1}Z_{B2}^{-1})Z_{B2}^{-1}Z_{G2}^{-1}\\ 
                    0 & 1 & -q^{-1}-qZ_{G2}^{-1}-qZ_{B1}^{-1}Z_{B2}^{-1}Z_{G2}^{-1}\\ 
                    0 & 0 & 1
                \end{pmatrix},\\
    &L= 
                \begin{pmatrix}
                    Z_{O1}^{-1} & 0 & 0\\
                    Z_{B1}^{-1}+Z_{B1}^{-1}Z_{O2}+q^2Z_{O2}Z_{B2} & Z_{B1}^{-1} & 0\\ q(Z_{G1}^{-1}Z_{B2}+Z_{B2}Z_{G2}+Z_{O2}Z_{B2}Z_{G2})    &  q(Z_{G1}^{-1}+Z_{G1}^{-1}Z_{B2}+Z_{B2}Z_{G2}) & Z_{G1}^{-1}
                \end{pmatrix},
\end{aligned}$}
\end{equation}
with inverse triple product $\Pi=L^{-1}U^{-1}$ obtained by formulae (\ref{invU}-\ref{invL}), see \cite{DalMartello2023}.
Performing the rescalings \eqref{rescalings} by passing to $\pazocal{X}_{\pazocal{Q}_2}^{\nicefrac{1}{3}}$, we obtain
\begin{equation}\label{Lmatrices}
    \widehat L:=Z_{O1}^{\nicefrac{1}{3}}Z_{B1}^{\nicefrac{1}{3}}Z_{G1}^{\nicefrac{1}{3}}L, \quad \widehat{\Pi}:=q^{-\frac{2}{3}}Z_{O1}^{\nicefrac{-1}{3}}Z_{B1}^{\nicefrac{-1}{3}}Z_{G1}^{\nicefrac{-1}{3}}\Pi.
\end{equation}
\begin{proposition}\label{th:E6D4FG}
The matrices $U,\widehat L,\widehat\Pi\in\SL_3(\pazocal{X}_{\pazocal{Q}_2}^{\nicefrac{1}{3}})$ satisfy the relations
\begin{equation}\label{H-rel-T}
    \begin{aligned}
    \left(U-\One\right)\left(U-\One\right)\left(U-\One\right)&=\Zero,\\
    \left(\widehat L-Z_{O1}^{\nicefrac{-2}{3}}Z_{B1}^{\nicefrac{1}{3}}Z_{G1}^{\nicefrac{1}{3}}\One\right)\left(\widehat L-Z_{O1}^{\nicefrac{1}{3}}Z_{B1}^{\nicefrac{-2}{3}}Z_{G1}^{\nicefrac{1}{3}}\One\right)\left(\widehat L-Z_{O1}^{\nicefrac{1}{3}}Z_{B1}^{\nicefrac{1}{3}}Z_{G1}^{\nicefrac{-2}{3}}\One\right)&=\Zero,\\
        \left(\widehat \Pi-q^{-\frac{2}{3}}Z_{O1}^{\nicefrac{-1}{3}}Z_{B1}^{\nicefrac{-1}{3}}Z_{G1}^{\nicefrac{-1}{3}}\One\right)\left(\widehat \Pi-q^{\frac{4}{3}}Z_{O1}^{\nicefrac{2}{3}}Z_{B1}^{\nicefrac{2}{3}}Z_{G1}^{\nicefrac{2}{3}}Z_{O2}Z_{B2}Z_{G2}\One\right)\phantom{---\,\,\,}\\\left(\widehat \Pi-q^{\frac{4}{3}}Z_{O1}^{\nicefrac{-1}{3}}Z_{B1}^{\nicefrac{-1}{3}}Z_{G1}^{\nicefrac{-1}{3}}Z_{O2}^{-1}Z_{B2}^{-1}Z_{G2}^{-1}\One\right)&=\Zero,\\
    U\,\widehat L\,\widehat\Pi &= q^{\nicefrac{-2}{3}}\One.
\end{aligned}
\end{equation}
The map $J_1\to U$, $J_2\to  \widehat{L}$, $J_3\to  \widehat{\Pi}$ gives a faithful representation of $H_{E_6}(\tilde{t},q^{2})$.
\end{proposition}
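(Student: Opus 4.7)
The plan is to realize the claim as a direct application of the functor $\mathcal{F}_q$ from \Cref{thm:truefunctor} to the faithful embedding of \Cref{thm:TrueD4-emb}. Interpret the quadruple $(\overline{O}, \overline{B}, \overline{G}, \overline{P})$ as the image of the generators $(K_1, K_2, K_3, K_4)$ in a representation of $\mathsf{H}_{D_4}(t, q^2)$; the specialised $t$-parameters are then $t_1 = Z_{O1}^{\nicefrac{1}{2}}$, $t_2 = Z_{B1}^{\nicefrac{1}{2}}$, $t_3 = Z_{G1}^{\nicefrac{1}{2}}$ and $t_4 = q Z_{O1}^{\nicefrac{1}{2}} Z_{B1}^{\nicefrac{1}{2}} Z_{G1}^{\nicefrac{1}{2}} Z_{O2} Z_{B2} Z_{G2}$, all central in $\pazocal{X}_{\pazocal{Q}_2}^{\nicefrac{1}{2}}$ since $Z_{O1}, Z_{B1}, Z_{G1}$ are isolated vertices and $Z_{O2} Z_{B2} Z_{G2}$ forms an isolated quiver cycle (cf.\ \Cref{rmk:2amalvars}).

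The functor then operates in two stages. First, compute explicitly the eigenspaces $V_i^{(2)} = e_i(V)$ inside the free right $\pazocal{X}_{\pazocal{Q}_2}$-module $V = \bigoplus_{2} \pazocal{X}_{\pazocal{Q}_2}$, producing the rank-$3$ submodule $E(V)$ recorded in \eqref{charE(V)}. Second, the middle-convolution restriction of \Cref{sec:MC} delivers the pseudo-reflection triple $(R_1, R_2, R_3)$ encoded by the matrix $A$, and the quantum Killing factorization of \Cref{se:q-killing} yields $U$, $L$ and $\Pi$. Then \Cref{lem:RepE_6} guarantees at once both the cubic Hecke relations \eqref{eq-HULPi} for $U$, $\widehat{L}$, $\widehat{\Pi}$ and the cyclic relation $U \widehat{L} \widehat{\Pi} = (q^2)^{-\nicefrac{1}{3}} \One = q^{-\nicefrac{2}{3}} \One$. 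Substituting the $t$-parameters identified above reproduces \eqref{H-rel-T} exactly; the $q^{\nicefrac{4}{3}}$ prefactor in the third eigenvalue of $\widehat{\Pi}$, in particular, arises from the $q$-correction to the ring-theoretic inverse $t_4^{-1}$ triggered by reordering the noncommuting monomial $Z_{O2} Z_{B2} Z_{G2}$.

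Passing from $(R_1, R_2, R_3)$ to $(\overline{R}_1, \overline{R}_2, \overline{R}_3)$ via the diagonal conjugation by $C \in \GL_3(\pazocal{X}_{\pazocal{Q}_2})$ preserves both the pseudo-reflection form (the off-triangular zeros and the diagonal units are untouched) and the quantum Killing factorization ($CUC^{-1}$ remains upper unitriangular and $CLC^{-1}$ lower triangular, with identical diagonals). Since conjugation preserves eigenvalues and $\overline{U}\,\overline{L}\,\overline{\Pi} = C (U L \Pi) C^{-1} = \One$, the relations of the previous paragraph translate verbatim to the matrices displayed in \eqref{eq:ULafterMC}--\eqref{Lmatrices}. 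The specific choice of $C$ is dictated by the need to align these representatives with the embedding of \Cref{thm:TrueE6-emb}, in view of the match carried out in \Cref{prop-match}. Faithfulness then follows from the simplicity argument of the proof of \Cref{thm:TrueE6-emb}: $H_{E_6}$ is simple whenever $q \neq 1$ (Theorems 6.5 and 6.10 of \cite{Etingof2007}), so any nonzero algebra homomorphism out of it is injective, and our map is manifestly nonzero. The main obstacle throughout is the careful $q$-bookkeeping when translating the commutative parameters of \Cref{lem:RepE_6} into the noncommutative coordinates of $\pazocal{X}_{\pazocal{Q}_2}^{\nicefrac{1}{3}}$.
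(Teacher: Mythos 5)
Your proof is correct and follows essentially the same route as the paper: the paper's proof of \Cref{th:E6D4FG} is the one-line observation that the statement is a corollary of \Cref{thm:truefunctor}, which you have simply unpacked in detail (identifying the $\tilde D_4$ Hecke parameters as the central monomials $t_1=Z_{O1}^{\nicefrac12},\dots,t_4=qZ_{O1}^{\nicefrac12}Z_{B1}^{\nicefrac12}Z_{G1}^{\nicefrac12}Z_{O2}Z_{B2}Z_{G2}$, tracking the $q\mapsto q^2$ shift and the $q^2$ reordering correction in $t_4^{-1}$, and noting the compatibility of the Killing factorization with diagonal conjugation by $C$). The bookkeeping checks out throughout, and your faithfulness argument matches the simplicity argument invoked in the proofs of \Cref{thm:truefunctor} and \Cref{thm:TrueE6-emb}.
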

\begin{proof}
The whole statement follows as a corollary of \Cref{thm:truefunctor}.
All relations can be checked directly in the Mathematica companion \cite{DalMartello2023}.
\end{proof}
Before proving  \Cref{thm:last}, we discuss the quiver seizure operation defined in the introduction.
Given a $4$-cycle in $\pazocal{Q}$ with vertices labelled cyclically by variables $Z_1,Z_2,Z_3,Z_4\in\pazocal{X}_{\pazocal{Q}}$ such that the indegree and outdegree of both $Z_2$ and $Z_4$ equal one, namely $\mathrm{deg}^+(Z_i)=\mathrm{deg}^-(Z_i)=1$ for $i=2,4$, the monomial $ Z_2 Z_4$ is automatically central in the corresponding quantum $\pazocal{X}$-torus.
Then, for any $c^{(0)}\in\mathbb C^*$, the assignment
\begin{equation}\label{seizure-map}
\begin{cases}
    Z_4^{-1}\longmapsto \frac{1}{c^{(0)}} Z_2,\\
    Z_j \longmapsto Z_j,\quad j\neq 2,    
\end{cases}
\end{equation}
extends to a quantum torus isomorphism $\pazocal{X}_{\pazocal{Q}}\big\slash{\left(Z_2 Z_4 - c^{(0)}\right)} \xrightarrow{\raisebox{-.75ex}[0ex][0ex]{\,\,$\sim$\,\,}} \pazocal{X}_{\pazocal{Q}\setminus Z_4}$, where ${\pazocal{Q}\setminus Z_4}$ is the full subquiver of $\pazocal{Q}$ obtained by erasing $Z_4$ and the two arrows incident with it. Analogously, we get $\pazocal{Q}\setminus Z_2$ by resolving $Z_2$ instead.
Notice that a quiver seizure, as a special case of vertex erasure, commutes with quiver mutations.
\begin{remark}\label{rmk:seizuremonomials}
    The monomial $Z_2 Z_4$ always allows for a seizure, but does not cover all the ways a seizure can manifest: when the rhombus attaches to the rest of $\pazocal{Q}$ so that even $Z_1Z_3$ is central, the whole monomial $Z_1Z_2Z_3Z_4$ can be also chosen. 
\end{remark}
We are now ready to prove our final \Cref{thm:last}, namely show that the triple $(U,\widehat L, \widehat\Pi)$ can be found within $(\overline C,\overline Y,\overline R)$ from \Cref{thm:E6-emb}.
\begin{theorem}\label{prop-match}
    Let $\pazocal{X}_{\pazocal{Q}_1}:= \pazocal{X}_{\pazocal{Q}_3}\big/I$ be the quotient by the ideal $I=( Z_{C1}Z_{C3}-1,\,Z^{(t)}_{111}Z^{(b)}_{111}Z_{C2}Z_{Y3}-1)$ and denote by $(\overline C_{I},\overline Y_{\!I}, \overline R_{I})$ the restriction of the triple \eqref{RCY} to $\pazocal{X}_{\pazocal{Q}_1}$.
    Then, 
    \begin{equation}\label{match}
        (\overline C_{I},\overline Y_{\!I}, \overline R_{I})=\mu\,\iota\tau(U,\widehat{L},\widehat{\Pi})
    \end{equation}
    via the entry-wise action of the following maps: the algebra isomorphism   $$\tau:\pazocal{X}_{\pazocal{Q}_2}\xrightarrow{\raisebox{-.75ex}[0ex][0ex]{\,\,$\sim$\,\,}}\pazocal{X}_{\pazocal{Q}_2}
    $$
    reversing the $q$ parameter, i.e.,
\begin{equation}\label{tau}
\begin{aligned}
    \tau(Z_{O1})=Z_{O1},\quad \tau(Z_{B1})&=Z_{B1},\quad \tau(Z_{G1})=Z_{G1}, \\
    \tau(Z_{O2})=Z_{O2},\quad \tau(Z_{B2})&=Z_{B2},\quad \tau(Z_{G2})=Z_{G2}, \\
    \tau(q)&=q^{-1};
\end{aligned}
\end{equation}
the algebra isomorphism
$$\iota:\tau(\pazocal{X}_{\pazocal{Q}_2})\xrightarrow{\raisebox{-.75ex}[0ex][0ex]{\,\,$\sim$\,\,}} \pazocal{X}'_{\pazocal{Q}_1}
$$
given by
    \begin{equation}\label{iota}
        \begin{aligned}
            &Z^{-1}_{O2}\mapsto q^{\nicefrac{-1}{3}}Z'_{Y1}, &&Z^{-1}_{O1}\mapsto q^{\nicefrac{-2}{3}}Z'_{C2}Z'^{-1}_{Y1},\\
    &Z^{-1}_{B2}\mapsto q^{\nicefrac{-1}{3}}{Z'}^{(t)}_{111}, &&Z^{-1}_{B1}\mapsto q^{\nicefrac{-2}{3}}{Z'}^{(b)}_{111}{Z'}^{(t)-1}_{111},\\
    &Z^{-1}_{G2}\mapsto q^{\nicefrac{5}{3}}Z'_{C1}, &&Z^{-1}_{G1}\mapsto q^{\nicefrac{-2}{3}}Z'_{Y2}Z'^{-1}_{C1};
        \end{aligned}
    \end{equation}
and the quantum cluster mutation
    $$
    \mu:\mathrm{Frac}(\pazocal{X}'_{\pazocal{Q}_1})\xrightarrow{\raisebox{-.75ex}[0ex][0ex]{\,\,$\sim$\,\,}}\mathrm{Frac}(\pazocal{X}_{\pazocal{Q}_1})
    $$ 
    at vertex $Z_{111}^{(b)}$:
    \begin{equation}\label{mut}
    \begin{aligned}
    &\mu(Z'^{(b)}_{111}) = Z^{(b)-1}_{111}, && \mu(Z'^{(t)}_{111})=Z^{(t)}_{111}\\
    &\mu(Z'_{C1})=Z_{C1}\left(1+qZ^{(b)-1}_{111}\right)^{-1}\!\!\!, && \mu(Z'_{Y2})=Z_{Y2}\left(1+qZ^{(b)-1}_{111}\right)^{-1}\!\!\!,\\
    &\mu(Z'_{Y1})=Z_{Y1}\left(1+qZ^{(b)}_{111}\right), && \mu(Z'_{C2})=Z_{C2}\left(1+qZ^{(b)}_{111}\right).
    \end{aligned}
    \end{equation}    
\end{theorem}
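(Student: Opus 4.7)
The plan is to decompose the proof into three coordinated verifications, carried out in the following order.

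First, I would verify that each constituent of the composition $\mu\circ\iota\circ\tau$ is a well-defined algebra (iso)morphism on its indicated domain. The involution $\tau$ in \eqref{tau} flips the sign of every arrow count in \eqref{qcommut}, hence is manifestly an automorphism of $\pazocal{X}_{\pazocal{Q}_2}$. For $\iota$ in \eqref{iota}, I would use \eqref{qcommut} in $\pazocal{X}'_{\pazocal{Q}_1}$ to check that the six listed image monomials satisfy the $q^{-1}$-deformed commutations prescribed by $\tau(\pazocal{X}_{\pazocal{Q}_2})$---a finite enumeration of pairwise arrow counts in the auxiliary quiver $\pazocal{Q}'_1$. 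The quantum mutation $\mu$ in \eqref{mut} is the standard quantum cluster $\pazocal{X}$-mutation at vertex $Z_{111}^{(b)}$, turning $\pazocal{Q}'_1$ into $\pazocal{Q}_1$.

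Second, I would identify the quotient $\pazocal{X}_{\pazocal{Q}_3}/I$ with $\pazocal{X}_{\pazocal{Q}_1}$ by recognising both generators of $I$ as central monomials associated to quiver seizures. The generator $Z_{C1}Z_{C3}-1$ is a two-vertex product sitting opposite in a rhombus of the diamond quiver of \Cref{fig:3amalquiver}, so the isomorphism \eqref{seizure-map} removes $Z_{C3}$. The generator $Z_{111}^{(t)}Z_{111}^{(b)}Z_{C2}Z_{Y3}-1$ is a full four-cycle of the type flagged in \Cref{rmk:seizuremonomials}, and its quotient removes $Z_{Y3}$. Composing the two seizures yields the subquiver $\pazocal{Q}_1$ and identifies the quotient algebra with $\pazocal{X}_{\pazocal{Q}_1}$.

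Third---and this is the bulk of the work---I would establish the matrix identity \eqref{match} entry by entry. Starting from the closed forms of $U$ and $\widehat L$ in \eqref{eq:ULafterMC}--\eqref{Lmatrices} (and of $\widehat\Pi=L^{-1}U^{-1}$ rescaled), I would apply $\tau$, substitute the $\iota$-images, collect $q$-powers using the commutations from step one, apply the mutation $\mu$, and finally reduce modulo $I$. A direct comparison with the explicit entries of $\overline C$ and $\overline Y$ in \eqref{RCY} (and $\overline R$ in the appendix) should then yield the three entry-wise equalities.

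The main obstacle lies entirely in step three: every map in the composition contributes its own $q$-weighting---from Weyl ordering, the sign flip in $\tau$, the specific fractional powers in $\iota$, and the $(1+qZ_{111}^{(b)\pm 1})$ factors of $\mu$---and these must combine coherently with the $q$-weights already carried by the entries of $(U,\widehat L,\widehat\Pi)$. This is exactly the reason the specific conjugator $C\in\GL_3(\pazocal{X}_{\pazocal{Q}_2})$ was inserted prior to the Killing factorization: it selects the unique representative in the pseudo-reflection conjugacy class for which $\mu\circ\iota\circ\tau$ lands on $(\overline C_{I},\overline Y_{\!I},\overline R_{I})$ without any residual diagonal twist that could not be absorbed by a further global conjugation. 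In practice the verification is a heavy noncommutative symbolic computation, best executed using \texttt{NCAlgebra} as in the Mathematica companion \cite{DalMartello2023}.
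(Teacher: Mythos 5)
Your proposal is correct and follows essentially the same structure as the paper's proof: identify the ideal $I$ with a pair of quiver seizures, interpret $\tau$, $\iota$, $\mu$ as the arrow reversal, quiver re-identification, and cluster mutation respectively, and then verify \eqref{match} entry by entry with computer-algebra assistance, noting that the special conjugator $C$ is what makes the match possible. The one subtlety you gloss over (and should flag) is why the images under $\mu$ are genuine elements of $\pazocal{X}_{\pazocal{Q}_1}^{\nicefrac{1}{3}}$ rather than only of its fraction field: only $Z_{O1},Z_{B1},Z_{G1}$ appear fractionally in $(U,\widehat L,\widehat\Pi)$ and their $\mu\circ\iota$-images carry no $(1+qZ_{111}^{(b)-1})^{-1}$ factor, while the formal inverse that does enter through $Z_{G2}^{-1}$ cancels against matching factors via the commutation rule $f(Z_\beta)Z_\alpha=Z_\alpha f(q^wZ_\beta)$ — this is the quantum Laurent phenomenon that the paper's computation relies on.
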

\begin{proof} We start by noticing that $\pazocal{X}_{\pazocal{Q}_1}$ is obtained by a well-defined quantum quotient: both $Z_{C1}Z_{C3}$ and $Z^{(t)}_{111}Z^{(b)}_{111}Z_{C2}Z_{Y3}$ are central in $ \pazocal{X}_{\pazocal{Q}_3}$.
Both these monomials in $\pazocal{X}_{\pazocal{Q}_3}$ can be recognized as seizures for the quiver in \Cref{fig:3amalquiver}: at vertex $Z_{C3}$ for the rhombus $\{Z^{(t)}_{111},Z_{C1},Z^{(b)}_{111},Z_{C3}\}$ and at vertex $Z_{Y3}$ for the rhombus $\{Z^{(t)}_{111},Z_{Y3},Z^{(b)}_{111},Z_{C2}\}$ (see also \Cref{rmk:seizuremonomials}).
By the seizure's properties, the $q$-commutations for $\pazocal{X}_{\pazocal{Q}_1}$ are encoded by the reduced quiver in which we have erased the vertices $Z_{Y3}$ and $Z_{C3}$ together with their incident arrows.
This is indeed $\pazocal{Q}_1$, displayed in two equivalent shapes in \Cref{fig:rearrangedquiver}.
\begin{figure}[!htb]
    \centering
    \includegraphics[width=.7\textwidth]{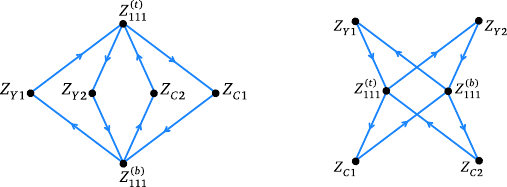}
    \caption{The reduced quiver in two equivalent shapes. On the left, the diamond obtained erasing the vertices $Z_{Y3}$ and $Z_{C3}$ directly in \Cref{fig:3amalquiver}. On the right, a rearranged star allowing for a better visualization of the mutation's action in \Cref{fig:mutquiver}.}\label{fig:rearrangedquiver}
\end{figure}

Therefore, the reduced triple $(\overline C_{I},\overline Y_{\!I}, \overline R_{I})$ is obtained via the identifications
\[
Z^{\nicefrac{1}{3}}_{C3}\mapsto Z_{C1}^{\nicefrac{-1}{3}},\quad Z_{Y3}^{\nicefrac{1}{3}}\mapsto Z_{C2}^{\nicefrac{-1}{3}}Z^{(b)\nicefrac{-1}{3}}_{111}Z^{(t)\nicefrac{-1}{3}}_{111}, \]
and its entries only involve the six variables $\{Z^{\nicefrac{1}{3}}_{Y1},Z^{\nicefrac{1}{3}}_{Y2},Z^{\nicefrac{1}{3}}_{C1},Z^{\nicefrac{1}{3}}_{C2},Z^{(t)\nicefrac{1}{3}}_{111}\!,Z^{(b)\nicefrac{1}{3}}_{111}\}$ generating $\pazocal{X}_{\pazocal{Q}_1}$.
It turns out that $\overline C_{I}$ is free from fractional powers and thus a genuine element in $\SL_3(\pazocal{X}_{\pazocal{Q}_1})$:
\begin{equation*}
    \overline C_{I}=\begin{pmatrix}
             1 & -1-q^{\nicefrac{-1}{3}}Z^{(t)}_{111}-q^{\nicefrac{-1}{3}}Z_{C2}Z^{(t)}_{111}(q^{-1}+Z^{(b)}_{111}) & q+q^{\nicefrac{1}{3}}Z_{C1}(q^{\nicefrac{1}{3}}+q^2Z^{(t)}_{111}+qZ_{C2}Z^{(t)}_{111})\\[.5em]
            0 & 1 & -q-q^{\nicefrac{2}{3}}Z_{C1}\\[.5em]
     0 & 0 & 1 
        \end{pmatrix}.
\end{equation*}
By the very definition of $I$, all its diagonal elements are turned into unities matching $\diag(U)$.

To push the match further, we need to take advantage of the cluster nature of the $\pazocal{X}$-space \cite{Fock2009}.
Indeed, to connect the quantum $\pazocal{X}$-torus $\pazocal{X}_{\pazocal{Q}_1}$ to the $\pazocal{X}_{\pazocal{Q}_2}$ one of the triple $(U,\widehat{L},\widehat{\Pi})$, we need to change chart via the mutation \eqref{mut}. 
In quiver terms, mutating at vertex $\alpha$ translates to a $3$-step recipe \cite{Fomin2021}:
\begin{enumerate}
    \item For each oriented two-arrow path $i\rightarrow\alpha\rightarrow j$, add a new arrow $i \rightarrow j$;
    \item Flip all arrows incident with $\alpha$;
    \item Remove all pairwise disjoint 2-cycles.
\end{enumerate}
Therefore, mutating at vertex $Z_{111}^{(b)}$, we turn the reduced quiver in \Cref{fig:rearrangedquiver} from star-shaped to box-shaped as in \Cref{fig:mutquiver}.
\begin{figure}[!htb]
    \centering
    \includegraphics[width=\textwidth]{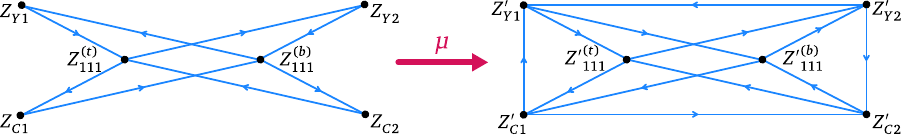}
    \caption{Reduced quiver, before and after quantum cluster mutation.}\label{fig:mutquiver}
\end{figure}
As expected, this mutated quiver encodes the $q$-commutations in $\pazocal{X}'_{\pazocal{Q}_1}$.
On the corresponding quantum cluster $\pazocal{X}$-tori, $\mu$ acts as a quantum analogue of a pullback sending $\pazocal{X}'_{\pazocal{Q}_1}$ to $\pazocal{X}_{\pazocal{Q}_1}$.

The gain in using $\mu$ is made manifest by the algebra isomorphism \eqref{iota}.
Indeed, $\iota$  reveals that the mutated quiver is equivalent to the triangular one in the right hand side of \Cref{fig:2amalquiver}, provided all arrows are reversed.
This is visually displayed in \Cref{fig:iotaquiver}.
\begin{figure}[!htb]
    \centering
    \includegraphics[width=.9\textwidth]{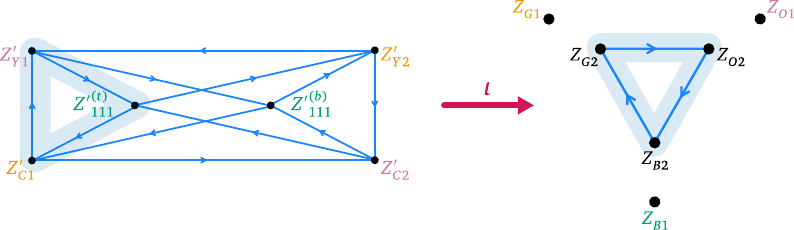}
    \caption{The quiver counterpart of the isomorphism $\iota$. Highlighted are the $3$-cycles identified by the map, while we color-coded each isolated vertex on the right with the corresponding pair of vertices on the left: e.g., {\color{3green}$Z_{B1} \propto {Z'}^{(t)}_{111}{Z'}^{(b)-1}_{111}$}.}\label{fig:iotaquiver}
\end{figure}
The quantum $\pazocal{X}$-torus counterpart of this arrow reversal is the $\tau$ map \eqref{tau}.

Now that quantum algebras agree, direct computations prove that the entry-wise action of the composition $\mu\iota\tau$ on $(U,\widehat{L},\widehat{\Pi})$ matches the reduced triple $(\overline C_{I},\overline Y_{\!I}, \overline R_{I})$.
Before we detail these computations, let us illustrate the phenomena allowing them to run successfully.
On the one hand, only $Z_{O1},Z_{B1},Z_{G1}$ make a fractional appearance in $(U,\widehat{L},\widehat{\Pi})$ and their image under $\mu\iota$ does not involve the formal inverse of $1+qZ^{(b)-1}_{111}$:
\begin{equation*}
    \mu\,\iota(Z^{-1}_{O1})=q^{\nicefrac{-2}{3}}Z_{C2}Z^{-1}_{Y1}, \quad \mu\,\iota(Z^{-1}_{B1})=q^{\nicefrac{-2}{3}}{Z}^{(b)}_{111}{Z}^{(t)-1}_{111}, \quad \mu\,\iota(Z^{-1}_{G1})=q^{\nicefrac{-2}{3}}Z_{Y2}Z^{-1}_{C1}.
\end{equation*}
Therefore, no fractional power of a formal inverse appears.
On the other hand, $\big(1+qZ^{(b)-1}_{111}\big)^{-1}$ does appear through $\mu\,\iota(Z_{G2}^{-1})$ but its algebra relations are easily figured out: indeed, for a formal power series $f(x)$,
\begin{equation}\label{formalpower}
Z_\beta Z_\alpha=q^wZ_\alpha Z_\beta \implies f(Z_\beta)Z_\alpha=Z_\alpha f(q^wZ_\beta).
\end{equation}
As a result, despite resorting to the fraction field for the mutation to act, the entry-wise action of  $\mu$ delivers genuine elements in $\SL_3(\pazocal{X}_{\pazocal{Q}_1}^{\nicefrac{1}{3}})$: using \eqref{formalpower}, each formal inverse simplifies.
As explained in \Cref{rmk:clustervarieties}, this is an instance of the quantum Laurent phenomenon \cite{Berenstein2005} within the $\pazocal{X}$-framework \cite{Fock2009}: $Z^{(b)}_{111}$ is mutable in the quantum cluster $\pazocal{X}$-variety over which the matrix entries are universally Laurent.

We conclude the proof detailing the computations behind the correspondence $U\mapsto\overline{C}_{I}$: with analogous operations, $\widehat{L}$ matches $\overline{Y}_{\!I}$ and $\widehat{\Pi}$ matches $\overline{R}_{I}$.
\begin{equation*}
\begin{aligned}
\mu\,\iota\tau U_{12}&=\mu\big(-1-q^{\nicefrac{-1}{3}}{Z'}^{(t)}_{111}+q^{\nicefrac{-4}{3}}Z'_{C2}{Z'}^{(t)}_{111}\big)=-1-q^{\nicefrac{-1}{3}}{Z}^{(t)}_{111}+q^{\nicefrac{-4}{3}}Z_{C2}(1+q{Z}^{(b)}_{111}){Z}^{(t)}_{111}\\
&=-1-q^{\nicefrac{-1}{3}}Z^{(t)}_{111}-q^{\nicefrac{-1}{3}}Z_{C2}Z^{(t)}_{111}(q^{-1}+Z^{(b)}_{111})\\
&=(\overline{C}_{I})_{12}\vspace{.5em}
\end{aligned}
\end{equation*}
\begin{equation*}
\begin{aligned}
\mu\,\iota\tau U_{13}&=\mu\,\iota\big(q + q^{-1}Z_{G2}^{-1} + q^{-1}(1 + Z_{B1}^{-1} + Z_{B1}^{-1}Z_{B2}^{-1} + Z_{O1}^{-1}Z_{B1}^{-1}Z_{O2}^{-1}Z_{B2}^{-1})Z_{B2}^{-1}Z_{G2}^{-1}\big)\\
&=\mu\big(q + q^{\nicefrac{2}{3}}Z'_{C1} + q^{\nicefrac{1}{3}}(1 + q^{\nicefrac{-2}{3}}{Z'}^{(b)}_{111}{Z'}^{(t)-1}_{111} + q^{-1}{Z'}^{(b)}_{111} + q^{-2}Z'_{C2}{Z'}^{(b)}_{111}){Z'}^{(t)}_{111}Z'_{C1}\big)\\
&=\mu\big(q + (q^{\nicefrac{2}{3}} + q^{\nicefrac{1}{3}}{Z'}^{(t)}_{111} + q^{\nicefrac{-1}{3}}{Z'}^{(b)}_{111} + q^{\nicefrac{-2}{3}}{Z'}^{(b)}_{111}{Z'}^{(t)}_{111} + q^{\nicefrac{-5}{3}}Z'_{C2}{Z'}^{(b)}_{111}{Z'}^{(t)}_{111})Z'_{C1}\big)\\
&=q + q^{\nicefrac{1}{3}}\big(q^{\nicefrac{1}{3}} + {Z}^{(t)}_{111} + q^{-1}Z_{C2}{Z}^{(t)}_{111}\big)(1+q^{-1}{Z}^{(b)-1}_{111})Z_{C1}(1+qZ^{(b)-1}_{111})^{-1}\\
&\overset{\eqref{formalpower}}{=\joinrel=}q + q^{\nicefrac{1}{3}}\big(q^{\nicefrac{1}{3}} + {Z}^{(t)}_{111} + q^{-1}Z_{C2}{Z}^{(t)}_{111}\big)Z_{C1}(1+q{Z}^{(b)-1}_{111})(1+qZ^{(b)-1}_{111})^{-1}\\
&=q+q^{\nicefrac{1}{3}}Z_{C1}(q^{\nicefrac{1}{3}}+q^2Z^{(t)}_{111}+qZ_{C2}Z^{(t)}_{111})\\
&=(\overline{C}_{I})_{13}\vspace{.5em}
\end{aligned}
\end{equation*}
\begin{equation*}
\begin{aligned}
\mu\,\iota\tau U_{23}&=\mu\,\iota(-q-q^{-1}Z_{G2}^{-1}-q^{-1}Z_{B1}^{-1}Z_{B2}^{-1}Z_{G2}^{-1})=\mu(-q-q^{\nicefrac{2}{3}}Z'_{C1}-q^{\nicefrac{-1}{3}}{Z'}_{111}^{(b)}Z'_{C1})\\
&=-q-q^{\nicefrac{2}{3}}Z_{C1}\left(1+qZ^{(b)-1}_{111}\right)^{-1}-q^{\nicefrac{-1}{3}}{Z}_{111}^{(b)-1}Z_{C1}\left(1+qZ^{(b)-1}_{111}\right)^{-1}\\
&=-q-q^{\nicefrac{2}{3}}Z_{C1}\left(1+qZ^{(b)-1}_{111}\right)^{-1}-q^{\nicefrac{5}{3}}Z_{C1}{Z}_{111}^{(b)-1}\left(1+qZ^{(b)-1}_{111}\right)^{-1}\\
&=-q-q^{\nicefrac{2}{3}}Z_{C1}
\end{aligned}\vspace{-.5em}
\end{equation*}
$\hspace{7em}=(\overline{C}_{I})_{23}$
\end{proof}

\begin{remark}
    We give further insight into the seizures making \Cref{prop-match} happen.    
    The one at $Z_{C3}$ has the natural central monomial of a rhombus, given by multiplying its two vertices not incident with the rest of the quiver---whose product is always central.
    The other monomial has two further factors instead, corresponding to the other two vertices of its rhombus: indeed, the product $Z^{(t)}_{111}Z^{(b)}_{111}$ is central in the quantum $\pazocal{X}$-torus encoded by the quiver in \Cref{fig:3amalquiver}.    
    The reason why exactly these two monomials appear must be found in the need of setting $\diag(\overline{C})=(1,1,1)$ for the match \eqref{match} to happen: the very way the transport matrix factorization forms this diagonal implies that the relations one must impose are those defining the ideal $I$, as one can check in \eqref{RCY} by collecting variables with the same power. \Cref{fig:seizure-mon} offers a visual interpretation of this phenomenon.
\end{remark}
\begin{figure}[!htb]
    \centering
    \includegraphics[width=.9\textwidth]{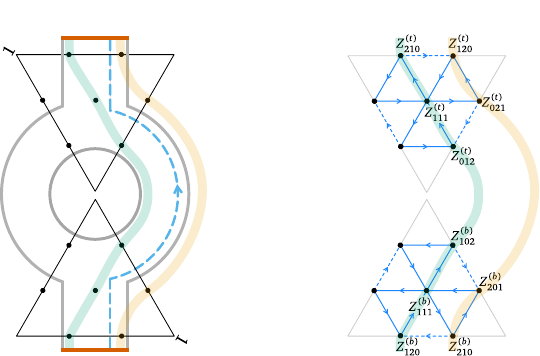}
    \caption{On the left, the $n=3$ fat graph with the {\color{3cyan}cyan} loop represented by $\overline{C}$ (the {\color{3red}red} segments are identified). On the right, the only Fock-Goncharov variables involved by this loop. The transparent closed ribbons highlight the way the central monomials of the two seizures in \Cref{prop-match} are formed as cycles, with the amalgamations bridging the gaps between the quivers of the two triangles. E.g., the monomial
    $Z_{C1}Z_{C3}=(Z^{(t)}_{021}Z^{(b)}_{201})(Z^{(b)}_{210}Z^{(t)}_{120})$ corresponds to the outer ribbon while the inner one, passing via the centers of the triangles, triggers the appearance of both $Z^{(t)}_{111}$ and $Z^{(b)}_{111}$ in the monomial  $Z^{(t)}_{111}Z^{(b)}_{111}Z_{C2}Z_{Y3}$.}\label{fig:seizure-mon}
\end{figure}
\begin{remark}\label{rmk:clustervarieties}
    Dealing with fat graph loops, each matrix we obtain in \Cref{sec:GDAHAreps} is a monodromy one whose entries are, unlike its trace, in general \emph{not} universally Laurent with respect to mutations at arbitrary coordinates. The global conjugation is a major player in this, forcing to freeze the variables it involves together with all those incident to them. It follows that the cluster structure underlying \Cref{thm:TrueD4-emb} has $Z_{O2}$, $Z_{B2}$, and $Z_{G2}$ frozen. Analogously, $Z^{(t)}_{111}$, $Z_{Y3}$, $Z_{C3}$, $Z_{Y1}$ and $Z_{C1}$ are frozen in \Cref{thm:TrueE6-emb}. In the latter case, notice that $Z^{(b)}_{111}$ from \eqref{mut} remains mutable, and that the remaining triple of frozen variables in $\pazocal{Q}_1$ is mapped by $\iota$ to the (frozen) triangle of $\pazocal{Q}_2$.
\end{remark}
\begin{remark}\label{re:coll-h}
    The process of reducing to $\pazocal{X}_{\pazocal{Q}_1}$ can be interpreted as colliding holes in the sense of \cite{Chekhov2017}: we are breaking an edge in the fat graph and treating the two open edges as bordered cusps on the boundary (see also \Cref{rmk:markedfatgrpah}). In order to perform this operation correctly, we need first to  
   perform a global conjugation by acting with the Cartan subgroup \cite{Goncharov2022} as we did in formula \eqref{eq:Cartan}.
\end{remark}

\appendix

\section{The quantum matrix \texorpdfstring{\,$\overline{R}$}{}}\label{app:formulae}

In this appendix, we give the explicit formulae for the entries of the matrix $\overline{R}$ in \Cref{thm:TrueE6-emb}.
As expected, they are equivalently obtained by defining $\overline{R}:=q^{\nicefrac{2}{3}}(\overline{C}\,\overline{Y})^{-1}$ as the quantum inverse or by conjugating its transport factorization \eqref{CYRfactorization}.

It follows from the factorization that each entry is a $\integer[q^{\nicefrac{\pm1}{3}}]$-linear combination of monomials in the coordinates of $\pazocal{X}_{\pazocal{Q}_3}$:
\begin{equation}
    \overline{R}=\begin{pmatrix}
        \,\overline{R}_{11} & \overline{R}_{12} & \overline{R}_{13}\,\\
        \,\overline{R}_{21} & \overline{R}_{22} & \overline{R}_{23}\,\\
        \,\overline{R}_{31} & \overline{R}_{32} & \overline{R}_{33}\,
    \end{pmatrix}
\end{equation}
with
\begin{equation*}
\resizebox{\textwidth}{!}{$
\begin{aligned}
    \overline{R}_{11}&=q^{\nicefrac{2}{3}}Z_{Y1}^{\nicefrac{2}{3}}Z_{Y2}^{\nicefrac{1}{3}} Z_{C1}^{\nicefrac{-1}{3}} Z_{C2}^{\nicefrac{-2}{3}} Z_{111}^{(t)\nicefrac{-1}{3}}Z_{111}^{(b)\nicefrac{-1}{3}},\allowdisplaybreaks\\[1.25em]
\overline{R}_{12}&={q}^{\nicefrac{1}{3}} Z_{Y1}^{\nicefrac{2}{3}}Z_{Y2}^{\nicefrac{1}{3}} Z_{C1}^{\nicefrac{-1}{3}}Z_{C2}^{\nicefrac{1}{3}}Z_{111}^{(t)\nicefrac{2}{3}}(
 Z_{111}^{(b)\nicefrac{2}{3}}+{q}^{\nicefrac{1}{3}}Z_{111}^{(b)\nicefrac{-1}{3}})+
 {q^{\nicefrac{1}{3}}} Z_{Y1}^{\nicefrac{2}{3}}Z_{Y2}^{\nicefrac{1}{3}} Z_{C1}^{\nicefrac{-1}{3}} Z_{C2}^{\nicefrac{-2}{3}} (q^{\nicefrac{1}{3}}Z_{111}^{(t)\nicefrac{-1}{3}} + Z_{111}^{(t)\nicefrac{2}{3}})Z_{111}^{(b)\nicefrac{-1}{3}},\allowdisplaybreaks\\[1.25em]
\overline{R}_{13}&={q}^{\nicefrac{1}{3}} Z_{Y1}^{\nicefrac{2}{3}}Z_{Y2}^{\nicefrac{1}{3}} Z_{C1}^{\nicefrac{-1}{3}}(Z_{C2}^{\nicefrac{1}{3}}+qZ_{C2}^{\nicefrac{-2}{3}})Z_{111}^{(t)\nicefrac{2}{3}}Z_{111}^{(b)\nicefrac{-1}{3}}+qZ_{Y1}^{\nicefrac{2}{3}}Z_{Y2}^{\nicefrac{1}{3}}(q^{\nicefrac{1}{3}}Z_{C1}^{\nicefrac{-1}{3}}Z_{C2}^{\nicefrac{1}{3}}+Z_{C1}^{\nicefrac{2}{3}}Z_{C2}^{\nicefrac{1}{3}})Z_{111}^{(t)\nicefrac{2}{3}}Z_{111}^{(b)\nicefrac{2}{3}},\allowdisplaybreaks\\[1.25em]
\overline{R}_{21}&= -q^{\nicefrac{2}{3}}(Z_{Y1}^{\nicefrac{2}{3}}+q^{\nicefrac{1}{3}}Z_{Y1}^{\nicefrac{-1}{3}})Z_{Y2}^{\nicefrac{1}{3}} Z_{C1}^{\nicefrac{-1}{3}} Z_{C2}^{\nicefrac{-2}{3}}Z_{111}^{(t)\nicefrac{-1}{3}}Z_{111}^{(b)\nicefrac{-1}{3}},\allowdisplaybreaks\\[1.25em]
\overline{R}_{22}&=\begin{aligned}[t]
  &-q^{\nicefrac{1}{3}}Z_{Y1}^{\nicefrac{2}{3}}Z_{Y2}^{\nicefrac{1}{3}} Z_{C1}^{\nicefrac{-1}{3}}Z_{C2}^{\nicefrac{1}{3}}Z_{111}^{(t)\nicefrac{2}{3}}(Z_{111}^{(b)\nicefrac{2}{3}}-q^{-1}Z_{111}^{(b)\nicefrac{-1}{3}})-
 q^{\nicefrac{1}{3}} Z_{Y1}^{\nicefrac{2}{3}}Z_{Y2}^{\nicefrac{1}{3}} Z_{C1}^{\nicefrac{-1}{3}} Z_{C2}^{\nicefrac{-2}{3}}(Z_{111}^{(t)\nicefrac{2}{3}}+q^{\nicefrac{1}{3}}Z_{111}^{(t)\nicefrac{-1}{3}}) Z_{111}^{(b)\nicefrac{-1}{3}}\\
 &-
 q^{\nicefrac{2}{3}}Z_{Y1}^{\nicefrac{-1}{3}}Z_{Y2}^{\nicefrac{1}{3}} (Z_{C1}^{\nicefrac{-1}{3}} Z_{C2}^{\nicefrac{-2}{3}}Z_{111}^{(t)\nicefrac{2}{3}}+q^{\nicefrac{1}{3}}Z_{C1}^{\nicefrac{-1}{3}} 
 Z_{C2}^{\nicefrac{-2}{3}}Z_{111}^{(t)\nicefrac{-1}{3}}+q^{-1}Z_{C1}^{\nicefrac{-1}{3}}Z_{C2}^{\nicefrac{1}{3}}Z_{111}^{(t)\nicefrac{2}{3}}) Z_{111}^{(b)\nicefrac{-1}{3}},\end{aligned}\allowdisplaybreaks\\[1.25em]
\overline{R}_{23}&=\begin{aligned}[t]&-q^{\nicefrac{1}{3}}(Z_{Y1}^{\nicefrac{2}{3}}+q^{\nicefrac{1}{3}}Z_{Y1}^{\nicefrac{-1}{3}})Z_{Y2}^{\nicefrac{1}{3}} Z_{C1}^{\nicefrac{-1}{3}}Z_{C2}^{\nicefrac{1}{3}}Z_{111}^{(t)\nicefrac{2}{3}} Z_{111}^{(b)\nicefrac{-1}{3}}-
qZ_{Y1}^{\nicefrac{2}{3}}Z_{Y2}^{\nicefrac{1}{3}} (q^{\nicefrac{1}{3}}Z_{C1}^{\nicefrac{-1}{3}}+Z_{C1}^{\nicefrac{2}{3}})Z_{C2}^{\nicefrac{1}{3}}Z_{111}^{(t)\nicefrac{2}{3}}Z_{111}^{(b)\nicefrac{2}{3}} \\
&-q^{\nicefrac{4}{3}}(Z_{Y1}^{\nicefrac{2}{3}}+q^{\nicefrac{1}{3}}Z_{Y1}^{\nicefrac{-1}{3}})Z_{Y2}^{\nicefrac{1}{3}} Z_{C1}^{\nicefrac{-1}{3}} Z_{C2}^{\nicefrac{-2}{3}}Z_{111}^{(t)\nicefrac{2}{3}}Z_{111}^{(b)\nicefrac{-1}{3}},\end{aligned}
\end{aligned}$}
\end{equation*}
\begin{equation}
\resizebox{\textwidth}{!}{$
\begin{aligned}
\overline{R}_{31}&={q^{\nicefrac{-5}{3}}}Z_{Y1}^{\nicefrac{-1}{3}} (Z_{Y2}^{\nicefrac{1}{3}}+qZ_{Y2}^{\nicefrac{-2}{3}})Z_{C1}^{\nicefrac{-1}{3}}  Z_{C2}^{\nicefrac{-2}{3}}  Z_{111}^{(t)\nicefrac{-4}{3}}Z_{111}^{(b)\nicefrac{-1}{3}}+
(q^{\nicefrac{-1}{3}}Z_{Y1}^{\nicefrac{2}{3}}+Z_{Y1}^{\nicefrac{-1}{3}})Z_{Y2}^{\nicefrac{1}{3}}  Z_{C1}^{\nicefrac{-1}{3}}  Z_{C2}^{\nicefrac{-2}{3}}   Z_{111}^{(t)\nicefrac{-1}{3}}Z_{111}^{(b)\nicefrac{-1}{3}},\\[1.25em]
\overline{R}_{32}&=\begin{aligned}[t]&q Z_{Y1}^{\nicefrac{-1}{3}} Z_{Y2}^{\nicefrac{1}{3}}  Z_{C1}^{\nicefrac{-1}{3}} Z_{C2}^{\nicefrac{1}{3}} (Z_{111}^{(t)\nicefrac{-1}{3}}+q^{\nicefrac{-7}{3}}Z_{111}^{(t)\nicefrac{2}{3}})Z_{111}^{(b)\nicefrac{-1}{3}}+(1+q^{-2}) Z_{Y1}^{\nicefrac{-1}{3}} Z_{Y2}^{\nicefrac{1}{3}}  Z_{C1}^{\nicefrac{-1}{3}}  Z_{C2}^{\nicefrac{-2}{3}}Z_{111}^{(t)\nicefrac{-1}{3}}Z_{111}^{(b)\nicefrac{-1}{3}}\\
&+{q^{\nicefrac{-5}{3}}}Z_{Y1}^{\nicefrac{2}{3}}  Z_{Y2}^{\nicefrac{1}{3}}  Z_{C1}^{\nicefrac{-1}{3}} Z_{C2}^{\nicefrac{1}{3}}Z_{111}^{(t)\nicefrac{2}{3}}(Z_{111}^{(b)\nicefrac{-1}{3}}+qZ_{111}^{(b)\nicefrac{2}{3}})+q^{-\nicefrac{5}{3}}Z_{Y1}^{\nicefrac{-1}{3}} (Z_{Y2}^{\nicefrac{1}{3}}+qZ_{Y2}^{\nicefrac{-2}{3}})  Z_{C1}^{\nicefrac{-1}{3}}  Z_{C2}^{\nicefrac{-2}{3}}  Z_{111}^{(t)-\nicefrac{4}{3}}Z_{111}^{(b)\nicefrac{-1}{3}}\\
&+q^{\nicefrac{-1}{3}} (Z_{Y1}^{\nicefrac{-1}{3}}+Z_{Y1}^{\nicefrac{2}{3}}) Z_{Y2}^{\nicefrac{1}{3}}  Z_{C1}^{\nicefrac{-1}{3}}  Z_{C2}^{\nicefrac{-2}{3}}  Z_{111}^{(t)\nicefrac{2}{3}}Z_{111}^{(b)\nicefrac{-1}{3}}+q^{\nicefrac{-1}{3}}(Z_{Y1}^{\nicefrac{2}{3}}Z_{Y2}^{\nicefrac{1}{3}}+q^{\nicefrac{-2}{3}}Z_{Y1}^{\nicefrac{-1}{3}}  Z_{Y2}^{\nicefrac{-2}{3}})Z_{C1}^{\nicefrac{-1}{3}}  Z_{C2}^{\nicefrac{-2}{3}}Z_{111}^{(t)\nicefrac{-1}{3}}Z_{111}^{(b)\nicefrac{-1}{3}},\end{aligned}\allowdisplaybreaks\\[1.25em]
\overline{R}_{33}&=\begin{aligned}[t]&Z_{Y1}^{\nicefrac{-1}{3}} Z_{Y2}^{\nicefrac{1}{3}}  Z_{C1}^{\nicefrac{-1}{3}}  (q^{\nicefrac{2}{3}}Z_{C2}^{\nicefrac{-2}{3}}+Z_{C2}^{\nicefrac{1}{3}})Z_{111}^{(t)\nicefrac{2}{3}}Z_{111}^{(b)\nicefrac{-1}{3}}+
{q^{\nicefrac{-2}{3}}}Z_{Y1}^{\nicefrac{2}{3}}  Z_{Y2}^{\nicefrac{1}{3}}  Z_{C1}^{\nicefrac{-1}{3}} (Z_{C2}^{\nicefrac{1}{3}}+qZ_{C2}^{\nicefrac{-2}{3}})Z_{111}^{(t)\nicefrac{2}{3}} Z_{111}^{(b)\nicefrac{-1}{3}}\\
&+Z_{Y1}^{\nicefrac{2}{3}} Z_{Y2}^{\nicefrac{1}{3}}  (q^{\nicefrac{1}{3}}Z_{C1}^{\nicefrac{-1}{3}}+Z_{C1}^{\nicefrac{2}{3}}) Z_{C2}^{\nicefrac{1}{3}} Z_{111}^{(t)\nicefrac{2}{3}} Z_{111}^{(b)\nicefrac{2}{3}}+Z_{Y1}^{\nicefrac{-1}{3}} Z_{Y2}^{\nicefrac{1}{3}}  Z_{C1}^{\nicefrac{-1}{3}}  (q^{-1} Z_{C2}^{\nicefrac{-2}{3}}+Z_{C2}^{\nicefrac{1}{3}})Z_{111}^{(t)\nicefrac{-1}{3}} Z_{111}^{(b)\nicefrac{-1}{3}}\\
&+
Z_{Y1}^{\nicefrac{-1}{3}}  Z_{Y2}^{\nicefrac{-2}{3}}   Z_{C1}^{\nicefrac{-1}{3}}  Z_{C2}^{\nicefrac{-2}{3}} Z_{111}^{(t)\nicefrac{-1}{3}} Z_{111}^{(b)\nicefrac{-1}{3}}.\end{aligned}
\end{aligned}$}
\end{equation}

\section{Analytical theory}\label{app:analysis}

In this appendix, we briefly discuss the analytic counterpart of the functor $\mathcal F_q$.

As proved in \cite{Mazzocco2018}, the classical limit of the representation of $H_{D_4}(t,q)$ on $\mathsf{Mat}_2(\mathbb{T}^2_q)$ contained in \Cref{thm:TrueD4-emb}
produces the generators of the monodromy group of a $2\times 2$ Fuchsian system with $4$ poles:
\begin{equation}\label{Fuch2}
    \frac{\mathrm{d}}{\mathrm{d}\lambda} \Phi=\left(
    \sum_{k=1}^{3}\frac{{A}_k}{\lambda-u_k} \right)\Phi,
    \end{equation}
    where each matrix ${A}_k\in\mathfrak{sl}_2(\mathbb C)$ has spectrum $\{\pm\frac{\theta_k}{2}\}$ for constants $\theta_k\not\in\mathbb Z$, and
    \begin{equation}
    -\left({A}_1+{A}_2+{A}_3\right)=:{A}_\infty=\frac{1}{2}\left(\begin{array}{cc}
    \theta_\infty & 0\\  0 & -    \theta_\infty\\ \end{array}\right)
    \end{equation}
    for a constant $\theta_\infty\not\in\mathbb Z$.
Fixing the fundamental solution at infinity and a basis in $\pi_1(\Sigma_{0,4,0})$ of non-intersecting loops circling each singularity once, the monodromy group of this system is given by 
\begin{equation}\label{eq:monodromy}
\left\{(M_1,M_2,M_3,M_\infty)  \in \SL_2(\mathbb C)^{\times4}\,\big|\, M_1M_2M_3M_\infty=\One, \,\hbox{eigen}(M_k)= e^{\pm\pi i \theta_k},\, k=1,2,3,\infty\right\}.
\end{equation}

The classical limits are obtained as $q\to 1$ by the correspondence
\begin{equation}
\overline O\to  M_1, \quad \overline B\to  M_2, \quad \overline G\to  M_3,\quad  \overline P \to  M_\infty,
\end{equation}
and the quantum cluster $\pazocal{X}$-variety is replaced by the classical Poisson cluster $\pazocal{X}$-variety.

In the following, we prove a lemma stating that the classical limit of the matrices \eqref{eq:ULafterMC} leads to the generalized monodromy data (namely Stokes and monodromy matrices) of a $3\times 3$ linear system with a simple pole at $0$ and a double pole at $\infty$:
\begin{equation}\label{irr}
    \frac{\mathrm{d}}{\mathrm{d}z}Y=\left(U+\frac{V-\mathbb I}{z}\right) Y,
    \end{equation}
    where $U=\diag(u_1,u_2,u_3)$ and $V$ is any constant matrix with diagonal part
    \begin{equation}
    \Theta=\diag(-\theta_1,-\theta_2,-\theta_3)
    \end{equation}
    and eigenvalues $\mu_1=\frac{1}{2}(\hbox{Tr}(\Theta)+ \theta_\infty)$, $ 
    \mu_2=\frac{1}{2}(\hbox{Tr}(\Theta)- \theta_\infty)$, $\mu_3=0$.
\begin{remark}\label{rmk:markedfatgrpah}
   According to the theory developed in \cite{Chekhov2017}, this $3\times3$ scenario corresponds to having a $\SL_3$-connection on $\Sigma_{0,2,2}$. This surface is precisely represented by the fat graph in \Cref{fig:3dimpaths} with two open edges, see also \Cref{re:coll-h}.
\end{remark}
Let us briefly describe the generalized monodromy data of the linear system \eqref{irr}: fixing both the fundamental matrix on a sector at infinity and an appropriate branch cut, the monodromy data are collected in the following subset of $\GL_3(\mathbb C)^{\times3}$:
\begin{equation}\label{eq:gen-mon}\begin{aligned}
\left\{(M_0,S_1,S_2) \in\GL_3(\mathbb C)\times B_+^{(1)}\times  B_-\,\big|\, M_0S_1S_2=\One,\right.\, &\mathrm{eigen}(M_0)=(1,e^{2\pi i \mu_1},e^{2\pi i \mu_2}),\\
&\mathrm{eigen}(S_2)\left.=(e^{i\pi\theta_1},e^{i\pi\theta_2},e^{i\pi\theta_3})\right\},
\end{aligned}
\end{equation}
where $B_+^{(1)}$ is the Borel subgroup of upper unitriangular matrices and $B_-$ is the Borel subgroup of lower triangular matrices.

The two linear systems \eqref{Fuch2} and \eqref{irr} are dual in the sense of Harnad \cite{Harnad1994}. This duality was first produced by Dubrovin in the case $\theta_i=0$ for $ i=1,2,3$. He showed that, given a loop $\gamma$ avoiding branch cuts in the $\lambda$-plane, the inverse Laplace transform 
\begin{equation}\label{eq:lapl}
Y(z)=\int_\gamma \Psi(\lambda) e^{\lambda z} d\lambda
\end{equation}
converges in a non-empty sector as $z\to\infty$ and
maps the system \eqref{irr} to a $3\times 3$ Fuchsian system of the form
\begin{equation}\label{Fuch1}
    \frac{\mathrm{d}}{\mathrm{d}\lambda} \Psi=\left(
    \sum_{k=1}^{3}\frac{{B}_k}{\lambda-u_k} \right)\Psi,\qquad B_k=-E_k V,
    \end{equation}
where $E_k$ is the matrix with zero entries everywhere except a $1$ in position $kk$.
In fact, the system \eqref{Fuch1} can be reduced to \eqref{Fuch2} by a simple quotient.
Subsequently, this result was generalized to any values of  $\theta_i$ by the second author of the present paper in \cite{Mazzocco2002}. Following this, Filipuk and Haraoka \cite{Filipuk2007} showed that the link between the $3\times 3$ Fuchsian system \eqref{Fuch1} and the $2\times 2$ one \eqref{Fuch2} could be recast in terms of the so-called additive middle convolution.
We summarise this chain of results in the following diagram:
\begin{equation}\label{sys-diagram}
    \begin{tikzcd}[column sep=huge]
\frac{\mathrm{d}}{\mathrm{d}\lambda}\Phi=\left(\sum_{k=1}^{3}\frac{{A}_k}{\lambda-u_k} \right)\Phi
  \arrow[r,"\mathrm{additive}","\mathrm{middle\phantom{.}conv.}"']
  \arrow[rr,bend left=15,dashed,"\mathrm{Harnad}","\mathrm{duality}"']
& \frac{\mathrm{d}}{\mathrm{d}\lambda} \Psi=\left(
    \sum_{k=1}^{3}\frac{{B}_k}{\lambda-u_k} \right)\Psi  \arrow[r,"\mathrm{inverse}","\mathrm{Laplace}"']
& \frac{\mathrm{d}}{\mathrm{d} z}Y=\left(U+\frac{V-\mathbb I}{z}\right) Y.
\end{tikzcd}
\end{equation}
It is important to point out that the Harnad duality maps the isomonodromic deformations equations of system \eqref{Fuch2} to the ones of system \eqref{irr}.

On the level of monodromy data, Dettweiler and Reiter showed that the monodromy matrices of two Fuchsian systems related by addittive middle convolution transform by the Katz middle convolution, also called multiplicative middle convolution \cite{Dettweiler2007}.
In our case, the monodromy matrices obtained via  multiplicative middle convolution are given by three pseudo-reflections $R_1,R_2,R_3$ together with the matrix product $R_4=(R_1 R_2 R_3)^{-1}$.  By constructing $S_1\in B_+^{(1)}$ and $S_2\in B_-$ as Killing factors of the triple $(R_1,R_2,R_3)$ and setting $M_0=R_4$, one obtains exactly an element in the set \eqref{eq:gen-mon} \cite{Boalch2005}.
 However, to prove that these are indeed the monodromy data  
of the system \eqref{irr} corresponding to the fundamental solution coming out of the  inverse Laplace transform, one needs to be careful with the loops of integration, the basis of loops in $\pi_1$ and the choice of sectors. This computation was performed by Dubrovin, following \cite{Balser1979}, in the case when $\theta_i=0$ for $ i=1,2,3$ and extended to all possible cases in \cite{Guzzetti2016}.
This discussion can be visualized by expanding \eqref{sys-diagram} to the following diagram, in which the commutativity of the left square is due to Dettweiler and Reiter while that of the right square is due to Dubrovin and Guzzetti:
\begin{equation}\label{diag-class}
    \begin{tikzcd}[row sep=large, column sep=huge]
\frac{\mathrm{d}}{\mathrm{d}\lambda}\Phi=\left(\sum_{k=1}^{3}\frac{{A}_k}{\lambda-u_k} \right)\Phi 
    \arrow[d,swap]  \arrow[r,"\mathrm{additive}","\mathrm{middle\phantom{.}conv.}"']
  \arrow[rr,bend left=15,dashed,"\mathrm{Harnad}","\mathrm{duality}"']
& \frac{\mathrm{d}}{\mathrm{d}\lambda} \Psi=\left(
    \sum_{k=1}^{3}\frac{{B}_k}{\lambda-u_k} \right)\Psi  \arrow[r,"\mathrm{inverse}","\mathrm{Laplace}"']
  \arrow[d,swap]
& \frac{\mathrm{d}}{\mathrm{d}z}Y=\left(U+\frac{V-\mathbb I}{z}\right) Y
  \arrow[d,swap]
\\
 \arrow[rr,bend right=15,2blue,dashed,"\mathrm{\mathcal F}_c"]
 (M_1,M_2,M_3,M_\infty) \arrow[r,"\mathrm{multiplicative}","\mathrm{middle\phantom{.}conv.}"'] & (R_1,R_2,R_3,R_4) \arrow[r,"\mathrm{Killing}","\mathrm{ factorization}"'] & (S_1,S_2,M_0)
\end{tikzcd}
\end{equation}

The {\color{2blue}blue} map, defined as the classical analogue $\mathcal F_c: (M_1,M_2,M_3,M_\infty)\to (S_1,S_2,M_0)$ of the functor $\mathcal F_q$, delivers the main result of this appendix:
\begin{lemma}
The classical limit $q\to 1$ of the matrices $U$ and $L$ \eqref{eq:ULafterMC} gives the generalized monodromy data of the linear system \eqref{irr}: recalling that $\Pi=(UL)^{-1}$,
\begin{equation}
    U\rightarrow S_1,\quad L\rightarrow S_2,\quad \Pi\rightarrow M_0.
\end{equation}
\end{lemma}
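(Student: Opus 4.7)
The plan is to traverse the commutative diagram \eqref{diag-class} in the classical limit and identify each horizontal arrow with our quantum constructions specialized at $q=1$. The left-hand square of that diagram is the Dettweiler-Reiter theorem \cite{Dettweiler2007} matching additive and multiplicative middle convolutions at the level of monodromy, and the right-hand square is Dubrovin's computation, extended to general indices by Guzzetti \cite{Guzzetti2016}, showing that the inverse Laplace transform \eqref{eq:lapl} sends the monodromy matrices of the Fuchsian system \eqref{Fuch1} to the generalized monodromy triple $(S_1,S_2,M_0)$ of the irregular system \eqref{irr}. Since the classical limit of \Cref{thm:TrueD4-emb} is known from \cite{Mazzocco2018} to place $(\overline{O},\overline{B},\overline{G},\overline{P})|_{q=1}$ at the bottom-left corner of \eqref{diag-class}, it suffices to show that $\mathcal{F}_q$ at $q=1$ realizes the classical map $\mathcal F_c$ obtained by composing the two lower horizontal arrows.

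First, I would verify that the classical limit of the quantum middle convolution of \Cref{def-MC} recovers multiplicative middle convolution. The entries of $N_1, N_2, N_3$ in \Cref{sec:MC} carry no $q$-factors, so the specialization $q=1$ is immediate, and the restriction to the eigenspace $E(V)$ coincides with the quotient by the subspace $\pazocal{K}$ in the Dettweiler-Reiter normal form, as already noted in the \emph{Remark} following \Cref{prop:Misfunctor}. Consequently, $(EN_1,EN_2,EN_3)|_{q=1}$ matches the pseudo-reflection triple $(R_1,R_2,R_3)$ produced by multiplicative middle convolution on the Fuchsian monodromy, which is the leftmost bottom arrow of \eqref{diag-class}.

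Second, I would observe that \Cref{th-q-killing} holds over any noncommutative ring and in particular remains valid when $\pazocal{R}$ commutativizes at $q=1$, in which case it reduces to the classical Killing factorization used by Boalch \cite{Boalch2005} to extract $(S_1,S_2)$ from a pseudo-reflection triple. The explicit upper unitriangular and lower triangular forms of $U$ and $L$ in \eqref{eq:ULafterMC} specialize to this classical factorization. Together with $\Pi=(UL)^{-1}$ and the identification $M_0=(R_1R_2R_3)^{-1}$ coming from \eqref{diag-class}, this identifies $(U,L,\Pi)|_{q=1}$ with $(S_1,S_2,M_0)$ up to conventions.

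The hard part will be a careful audit of the normalization and sectorial conventions, which the $q=1$ specialization cannot resolve on its own: the choice of fundamental solution at $\infty$ for \eqref{irr}, the admissible direction defining $S_1\in B^{(1)}_+$ and $S_2\in B_-$, the branch cut for $\log z$, as well as the rescalings \eqref{rescalings}, all have to be tracked through the Laplace transform \eqref{eq:lapl} in order to exclude that our $U,L,\Pi$ differ from $(S_1,S_2,M_0)$ by a diagonal or permutation conjugation. This bookkeeping is precisely what Dubrovin performed for the isotropic case $\theta_i=0$ and Guzzetti extended in \cite{Guzzetti2016} to general indices, so the substance of the matching is already present in the literature, and the remaining task is to spell out the dictionary between their conventions and ours.
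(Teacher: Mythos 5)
Your proposal follows essentially the same route as the paper: the paper's proof is the single assertion that the square relating $\mathcal F_q$, $\mathcal F_c$, and the two $q\to 1$ arrows commutes by construction, and your argument unpacks exactly that square, checking the two lower horizontal arrows (Dettweiler--Reiter for middle convolution, Boalch/Dubrovin/Guzzetti for the Killing factorization) and the left vertical arrow (Mazzocco 2018) individually. Your closing caveat about sectorial and normalization conventions is a fair point, but the paper absorbs it into the already-established definition of $\mathcal F_c$ in diagram \eqref{diag-class}, so it does not represent a gap in the intended argument.
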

\begin{proof} This is a straightforward consequence of the fact that, by construction of $\mathcal F_q$ and $\mathcal F_c$, the following diagram commutes
  \begin{equation*}
    \begin{tikzcd}
(\overline O,\overline B,\overline G,\overline P )
\arrow{r}{\mathcal F_q} \arrow[swap]{d}{q\to 1} & (U, L, \Pi) \arrow{d}{q\to 1} \\%
(M_1,M_2,M_3,M_\infty) \arrow{r}{\mathcal F_c}&{(S_1,S_2,M_0)}
\end{tikzcd}\vspace{-2.5em}
    \end{equation*} 
\end{proof}
Following \cite{Laredo2022}, a possible quantization of the irregular system \eqref{irr} is given by the dynamical Knizhnik-Zamolodchikov (DKZ) equation
\begin{equation}
\mathrm{d}-\left(\hbar \frac{\Omega}{z}+ \mathrm{ad}(\mu\otimes 1)\right)\mathrm{d}z,
\end{equation}
where $\Omega\in\mathfrak g\otimes \mathfrak g$ is the Casimir element of $\mathfrak g=\mathfrak{sl}_3(\mathbb C)$, $\mu\in\mathfrak h$ belongs to the Cartan subalgebra $\mathfrak{h}$ of $\mathfrak g$, and $\hbar$ is a formal parameter.
The isomonodromic deformation equations of DKZ are given by the Casimir equation, which is dual to the standard Knizhnik-Zamolodchikov (KZ) equation  \cite{Laredo2002}. The latter is the quantization of the isomonodromic deformation equations of the $2\times 2$ linear system \eqref{Fuch2} \cite{Reshetikhin1992}.
 As observed by Harnad (introduction of  \cite{Tarasov2002}), this duality ``is essentially the ‘quantum’ version'' of the duality between the isomonodromic deformation equations of the linear systems \eqref{Fuch2} and \eqref{irr}.
 A complete proof of this statement, including the quantization of the linear system \eqref{Fuch2}, the quantum analogue of diagram \eqref{diag-class} as well as the 
 interplay between nonlinear monodromy and linear one, is still unknown. 
 Our \Cref{prop-match} provides a direct map on the level of quantum monodromy data. We postpone to future work the construction of a quantum analogue of the whole diagram \eqref{diag-class}.

\begin{remark}
    Another quantization of the Stokes matrices $S_1$ and $S_2$, developed in \cite{Xu2024}, was pointed out to us by the referee, whom we thank for the insightful comment.
    From the $R$-matrix formulation, that paper proves that a suitable combination of the entries of the resulting quantum Stokes matrices gives rise to a representation of the Drinfeld-Jimbo quantum group $U_q(\mathfrak{gl}_3)$.
    Given that the moduli spaces of pinnings provides a geometric realization of quantum groups \cite{Goncharov2022}, it would be interesting to compare the two quantizations. Transport matrices are already known to satisfy the so-called $RTT$ relations via the quantum trigonometric $R$-matrix \cite{Chekhov2022}, and we thus expect the Stokes ones to satisfy quantum reflection equations---as is the case when $V$ is skew-symmetric \cite{Chekhov2011}.
\end{remark}

{
\small
\bibliographystyle{abbrv}
\bibliography{GDAHA-ref}

\begin{thebibliography}{10}

\bibitem{Balser1979}
W.~Balser, W.~B. Jurkat, and D.~A. Lutz.
\newblock {B}irkhoff invariants and {S}tokes’ multipliers for meromorphic linear differential equations.
\newblock {\em Journal of Mathematical Analysis and Applications}, 71:48--94, 1979.

\bibitem{Berenstein2005}
A.~Berenstein and A.~Zelevinsky.
\newblock Quantum cluster algebras.
\newblock {\em Advances in Mathematics}, 195:405--455, 2005.

\bibitem{Boalch2005}
P.~Boalch.
\newblock {F}rom {K}lein to {P}ainlevé via {F}ourier, {L}aplace and {J}imbo.
\newblock {\em Proceedings of the London Mathematical Society}, 90:167--208, 2005.

\bibitem{Chekhov2011}
L.~Chekhov and M.~Mazzocco.
\newblock {Isomonodromic deformations and twisted Yangians arising in Teichmüller theory}.
\newblock {\em Advances in Mathematics}, 226:4731--4775, 2011.

\bibitem{Chekhov2017}
L.~O. Chekhov, M.~Mazzocco, and V.~N. Rubtsov.
\newblock {Painlevé Monodromy Manifolds, Decorated Character Varieties, and Cluster Algebras}.
\newblock {\em International Mathematics Research Notices}, 24:7639--7691, 2017.

\bibitem{Chekhov2018}
L.~O. Chekhov, M.~Mazzocco, and V.~N. Rubtsov.
\newblock {Algebras of quantum monodromy data and character varieties}.
\newblock {\em Geometry and Physics}, 1:39--68, 2018.

\bibitem{Chekhov2021}
L.~O. Chekhov, M.~Mazzocco, and V.~N. Rubtsov.
\newblock {Quantised Painlev\'e monodromy manifolds, Sklyanin and Calabi--Yau algebras}.
\newblock {\em Advances in Mathematics}, 376, 2021.

\bibitem{Chekhov2022}
L.~O. Chekhov and M.~Shapiro.
\newblock {Log-canonical coordinates for symplectic groupoid and cluster algebras}.
\newblock {\em International Mathematics Research Notices}, 11:9565–9652, 2023.

\bibitem{Cherednik2005}
I.~Cherednik.
\newblock {\em {Double affine Hecke algebras}}.
\newblock Cambridge University Press, 2005.

\bibitem{Coleman1989}
J.~Coleman.
\newblock {K}illing and the {C}oxeter transformation of {K}ac-{M}oody algebras.
\newblock {\em Inventiones Mathematicae}, 95:447--477, 1989.

\bibitem{Coman2015}
I.~Coman, M.~Gabella, and J.~Teschner.
\newblock {Line operators in theories of class S, quantized moduli space of flat connections, and Toda field theory}.
\newblock {\em Journal of High Energy Physics}, 143, 2015.

\bibitem{DalMartello2023}
D.~{Dal Martello}.
\newblock Mathematica companion.
\newblock {\em https://doi.org/10.6084/m9.figshare.25888318}, 2024.

\bibitem{Dettweiler2007}
M.~Dettweiler and S.~Reiter.
\newblock {M}iddle convolution of {F}uchsian systems and the construction of rigid differential systems.
\newblock {\em Journal of Algebra}, 318:1--24, 2007.

\bibitem{Etingof2006}
P.~Etingof, W.~L. Gan, and A.~Oblomkov.
\newblock {G}eneralized double affine {H}ecke algebras of higher rank.
\newblock {\em Journal fu\"r die reine und angewandte Mathematik}, 600:177--201, 2006.

\bibitem{Etingof2002}
P.~Etingof and V.~Ginzburg.
\newblock {Symplectic reflection algebras, Calogero-Moser space, and deformed Harish-Chandra homomorphism}.
\newblock {\em Inventiones Mathematicae}, 147:243--348, 2002.

\bibitem{Etingof2007}
P.~Etingof, A.~Oblomkov, and E.~Rains.
\newblock {G}eneralized double affine {H}ecke algebras of rank 1 and quantized {D}el {P}ezzo surfaces.
\newblock {\em Advances in Mathematics}, 212:749–796, 2007.

\bibitem{Facciotti2024}
B.~Facciotti.
\newblock {The wild Riemann-Hilbert correspondence and moduli spaces of pinnings}.
\newblock {\em MRes thesis}, 2024.

\bibitem{Filipuk2007}
G.~Filipuk and Y.~Haraoka.
\newblock {M}iddle {C}onvolution and deformation for {F}uchsian systems.
\newblock {\em Journal of the London Mathematical Society}, 76:438--450, 2007.

\bibitem{Fock2009}
V.~Fock and A.~Goncharov.
\newblock The quantum dilogarithm and representations of quantum cluster varieties.
\newblock {\em Inventiones mathematicae}, 175:223–286, 2009.

\bibitem{Fomin2021}
S.~Fomin, L.~Williams, and A.~Zelevinsky.
\newblock {Introduction to Cluster Algebras. Chapters 1-3}.
\newblock {\em arXiv:1608.05735v4}, 2021.

\bibitem{Fu2016}
Y.~Fu and S.~Shelley-Abrahamson.
\newblock {A Family of Finite-Dimensional Representations of Generalized Double Affine Hecke Algebras of Higher Rank}.
\newblock {\em SIGMA}, 12, 2016.

\bibitem{Goncharov2022}
A.~Goncharov and L.~Shen.
\newblock Quantum geometry of moduli spaces of local systems and representation theory.
\newblock {\em arXiv:1904.10491v3}, 2022.

\bibitem{Guzzetti2016}
D.~Guzzetti.
\newblock {On stokes matrices in terms of connection coefficients}.
\newblock {\em Funkcialaj Ekvacioj}, 59:383--433, 2016.

\bibitem{Harnad1994}
J.~Harnad.
\newblock {Dual isomonodromic deformations and moment maps to loop algebras}.
\newblock {\em Communications in Mathematical Physics}, 166:337--366, 1994.

\bibitem{NCA}
J.~W. Helton and M.~C. de~Oliveira.
\newblock {NCAlgebra - Version 4.0.7, a noncommutative algebra package for Mathematica}.
\newblock {\em https://github.com/NCAlgebra/NC}, 2002.

\bibitem{Jordan14}
D.~Jordan.
\newblock {Q}uantized multiplicative quiver varieties.
\newblock {\em Advances in Mathematics}, 250:420--466, 2014.

\bibitem{Katz1996}
N.~M. Katz.
\newblock {\em {R}igid {L}ocal {S}ystems}.
\newblock Princeton University Press, 1996.

\bibitem{Koornwinder2007}
T.~Koornwinder.
\newblock {The Relationship between Zhedanov's Algebra $AW(3)$ and the Double Affine Hecke Algebra in the Rank One Case}.
\newblock {\em SIGMA}, 3, 2007.

\bibitem{Koornwinder2011}
T.~Koornwinder and F.~Bouzeffour.
\newblock {Nonsymmetric Askey--Wilson polynomials as vector--valued polynomials}.
\newblock {\em Applicable Analysis}, 90:731--746, 2011.

\bibitem{Macdonald2003}
I.~G. Macdonald.
\newblock {\em {Affine Hecke Algebras and Orthogonal Polynomials}}.
\newblock Cambridge University Press, 2003.

\bibitem{Mazzocco2002}
M.~Mazzocco.
\newblock {P}ainlevé {S}ixth equation as {I}somonodromic {D}eformations {E}quation of an {I}rregular {S}ystem.
\newblock {\em CRM Proceedings and Lecture Notes}, 32:219–238, 2002.

\bibitem{Mazzocco2016}
M.~Mazzocco.
\newblock { Confluences of the Painlev\'e equations, Cherednik algebras and q-Askey scheme}.
\newblock {\em Nonlinearity}, 9:2565–2608, 2016.

\bibitem{Mazzocco2018}
M.~Mazzocco.
\newblock {E}mbedding of the rank 1 {DAHA} into {M}at$(2,\mathbb{T}_q)$ and its automorphisms.
\newblock {\em Advanced Studies in Pure Mathematics}, 76:449--468, 2018.

\bibitem{Noumi2004}
M.~Noumi and J.~Stokman.
\newblock {Askey-Wilson polynomials: an affine Hecke algebra approach}.
\newblock {\em Advances in the theory of special functions and orthogonal polynomials}, pages 111--144, 2004.

\bibitem{Oblomkov2004}
A.~Oblomkov.
\newblock {Double affine Hecke algebras and Calogero-Moser spaces}.
\newblock {\em Representation Theory}, 8:243--266, 2004.

\bibitem{Oblomkov2009}
A.~Oblomkov and E.~Stoica.
\newblock {Finite dimensional representations of the double affine Hecke algebra of rank 1}.
\newblock {\em Journal of Pure and Applied Algebra}, 213:766--771, 2009.

\bibitem{Reshetikhin1992}
N.~Reshetikhin.
\newblock {The Knizhnik-Zamolodchikov system as a deformation of the isomonodromy problem}.
\newblock {\em Letters in Mathematical Physics}, 26:167–177, 1992.

\bibitem{Sahi1999}
S.~Sahi.
\newblock {Nonsymmetric Koornwinder Polynomials and Duality}.
\newblock {\em Annals of Mathematics}, 150:267--282, 1999.

\bibitem{Stokman2000}
J.~Stokman.
\newblock {Koornwinder polynomials and affine Hecke algebras}.
\newblock {\em International Mathematics Research Notices}, 19:1005--1042, 2000.

\bibitem{Tarasov2002}
V.~Tarasov and A.~Varchenko.
\newblock {Duality for Knizhnik–Zamolodchikov and Dynamical Equations}.
\newblock {\em Acta Applicandae Mathematicae}, 73:141–154, 2002.

\bibitem{Laredo2002}
V.~{Toledano Laredo}.
\newblock {A Kohno-Drinfeld theorem for quantum Weyl groups}.
\newblock {\em Duke Mathematical Journal}, 112:421--451, 2002.

\bibitem{Laredo2022}
V.~{Toledano Laredo} and X.~Xu.
\newblock {Stokes phenomena, Poisson-Lie groups and quantum groups}.
\newblock {\em Advances in Mathematics}, 429, 2023.

\bibitem{Xu2024}
X.~Xu.
\newblock {Representations of quantum groups arising from the Stokes phenomenon}.
\newblock {\em arXiv:2012.15673v3}, 2024.

\end{thebibliography}
}

\end{document}